

\documentclass{article}

\usepackage{amsmath}
\usepackage{amsthm}
\usepackage{amsfonts}
\usepackage{amssymb}
\usepackage{graphicx}
\usepackage{latexsym}
\usepackage{mathrsfs}

\newtheorem{theorem}{Theorem}[section]
\newtheorem{thm}{Theorem}[section]

\newtheorem{lem}[theorem]{Lemma}

\newtheorem{prop}[theorem]{Proposition}

\theoremstyle{definition}

\newtheorem{defn}[theorem]{Definition}

\newtheorem{cor}[theorem]{Corollary}

\theoremstyle{remark}
\newtheorem{remark}[theorem]{Remark}

\numberwithin{equation}{section}




\newcommand\bA{\mathbb A}

\newcommand\MM{\mathbb M}
\newcommand\NN{\mathbb N}

\newcommand\RR{\mathbb R}
\newcommand\ZZ{\mathbb Z}

\newcommand\EE{\mathbb E}
\newcommand\FF{\mathbb F}
\newcommand\F{\mathbb F}


\newcommand\cB{\mathcal{B}}

\newcommand\cG{\mathcal{G}}

\newcommand\cI{\mathcal{I}}

\newcommand\cS{\mathcal{S}}

\newcommand\cF{\mathcal{F}}

\newcommand\cR{\mathcal{R}}



\newcommand\Inn{\operatorname{Inn}}

\newcommand\Par{\operatorname{P}}




\newcommand\tphi{\tilde\phi}

\newcommand\tY{\widetilde Y}
\newcommand\tZ{\widetilde Z}

\newcommand\norm[1]{\left\|#1\right\|}

\newcommand\abs[1]{\left|#1\right|}



\def\cc{{\curvearrowright}}

\newcommand\sS{\textnormal{S}}

\def\cc{\curvearrowright}
\def\f{{\textrm{fin}}}

\def\tcB{{\tilde {\mathcal B}}}
\def\tcS{{\tilde {\mathcal S}}}
\def\tPhi{\tilde \Phi}

\begin{document}
\title{Geometric covering arguments and ergodic theorems for free groups}
\author{Lewis Bowen\footnote{supported in part by NSF grant DMS-0968762, NSF CAREER Award DMS-0954606 and BSF grant 2008274} ~and Amos Nevo\footnote{supported in part by ISF 776-09 grant  and BSF grant 2008274}}






\maketitle
\begin{abstract}
We present a new approach to the proof of ergodic theorems for actions of free groups which generalize
 the classical geometric covering and asymptotic invariance arguments used in the ergodic theory of amenable groups. 
Existing maximal and pointwise ergodic theorems for free group actions are extended to a large class of geometric averages which were not accessible by previous techniques. 
 \end{abstract}

\tableofcontents

\section{Introduction}

Let $G$ be a locally compact second countable group with Haar measure $m$, and let $B_t$ for $t\in \NN$ or $t\in \RR$ be a family Borel sets of positive finite measure. Let  $\mu_t$ be probability measures supported on $B_t$.  Suppose $G$ acts by measure-preserving transformations on a probability space $(X,\lambda)$. For any $f \in L^1(X,\lambda)$ we may consider the averaging operator 
$$\bA_t[f](x):= \int_{B_t} f(g^{-1}x)~d\mu_t(g).$$
Let $\EE[f|G]$ denote the conditional expectation of $f$ with respect to the $\sigma$-algebra of $G$-invariant subsets. We say that $\{\mu_t\}$ is a {\em pointwise ergodic family in $L^p$} if 
$\bA_t[f]$ converges to $\EE[f|G]$ pointwise almost everywhere and in $L^p$-norm  for every $f\in L^p(X,\lambda)$ and for every measure-preserving action of $G$ on a probability space $(X,\lambda)$. 

The most useful pointwise ergodic families are those in which $B_t$ are naturally connected with the geometry of the group.  A basic case to consider is when $B_t$ is the ball of radius $t> 0$ with respect to an invariant metric, and $\mu_t$ is the Haar-uniform probability measure on $B_t$, namely the density of $\mu_t$ is $\chi_{B_t}/m(B_t)$. Such  averages are referred to as ball averages. Spherical and shell averages are defined similarly.

Most of the research on ergodic theorems  has focused on the case when the group  is amenable and the averages are Haar-uniformly distributed on sets which form an asymptotically invariant  (F\o lner) sequence.  
The covering properties of translates of these sets and their property of asymptotic invariance play an indispensable role in the arguments developed in the amenable case, and we refer to \cite{N3} for a detailed survey of these methods and current results. 

In contrast, non-amenable groups do not admit asymptotically invariant sequences, 
 and so the arguments developed to handle amenable groups are not directly applicable. An alternative  general approach to the ergodic theory of group actions based on the spectral theory of unitary representations was developed and applied to the case where $G$ is a semisimple $S$-algebraic group, or a lattice subgroup of such a group. We refer to \cite{GN1} for a detailed account of this theory and to \cite{GN2} for some of its applications. Naturally, reliance on harmonic analysis techniques limits the scope of this theory to groups whose unitary representation theory can be explicated, and to their lattice subgroups. 
 
 For general groups, and certainly for discrete groups such as (non-elementary) word-hyperbolic groups for example, spectral information is usually unavailable and harmonic analysis techniques are usually inapplicable. Exceptions do exist, and for example it was proven by spectral methods that ball averages with respect to certain invariant metrics on the free group do indeed form pointwise ergodic sequences. 
 The metrics allowed are those arising from first fixing an embedding of the free group as a lattice in a locally compact group. Thus in \cite{N1}\cite{NS} the free group is viewed as a lattice in the group of automorphisms of a regular tree, and  in \cite{GN1} as a lattice in $PSL_2(\RR)$, and the metric is obtained by restricting a suitable $G$-invariant metric to the lattice subgroup. 
Note that in the case of the tree metric a periodicity phenomenon arise, namely the balls form a pointwise ergodic sequence if and only if the sign character of the free group does not appear in the spectrum. 
 
 In the case of the tree metric, a proof of the ergodic theorem in $L\log L$ was given by \cite{Bu}, using Markov operators (inspired by earlier related ideas in \cite{Gr99}). This method extends to groups with a Markov presentation (which include all hyperbolic groups). The averaging sequences obtained are related to the Markov presentation rather than a metric structure on the group.

  In the present paper we develop a new approach to pointwise ergodic theorems for actions of free groups,  based on geometric covering and asymptotic invariance arguments. This approach has two significant advantages: first, it constitutes a direct generalization of the classical arguments employed to prove ergodic theorems for amenable groups, and in fact reduces the proof of ergodic theorems for the free group to the proof of ergodic theorems for a certain amenable equivalence relation. Second, as will be shown in forthcoming work, the new ideas extend beyond the class of free groups to word-hyperbolic groups \cite{BN1}, semisimple Lie groups \cite{BN2}, and others. 
  
  
   Our goal in what follows is to explain our method in detail in the most accessible case, namely that of free groups, and show how to use it to generalize the existing ergodic theorems on free groups. The main results establish maximal inequalities and pointwise convergence for a wide class of geometrically defined averages not accessible by previous techniques, one simple example being sector averages (defined below). We also establish the integrability of the maximal function associated with these sequences when the original function is in $L\log L$, and thus also pointwise convergence of the averages acting on functions in this space.

\subsection{Statement of the main theorems}\label{sec:state}
Let $\FF=\langle a_1,\dots,a_r \rangle$ denote the free group on $r$ generators. Let $\sS=\{a_i, a_i^{-1}\}_{i=1}^r $ be the associated symmetric generating set. For every nonidentity element $g\in \FF$, there is a unique sequence $t_1,\ldots, t_n$ of elements in $\sS$ such that $g=t_1\cdots t_n$ and $n \ge 1$ is as small as possible. Define $|g|=n$. Let $\partial \FF$ be the boundary of $\FF$ which we identify with the set of all infinite sequences $(s_1,s_2,\ldots) \in \sS^\NN$ such that $s_{i+1} \ne s_i^{-1}$ for all $i \ge 1$. If $g=t_1\cdots t_n$ as above then the {\em shadow of $g$} (with light source at $e$)  is the compact open set 
$$O_g=\big\{ (s_1,s_2,\ldots) \in \partial \FF :~ s_i=t_i ~\text{ for }1\le i \le n\big\}.$$
The boundary admits a natural probability measure $\nu$ such that $\nu(O_g)=(2r)^{-1}(2r-1)^{-|g|+1}.$  

We denote the sphere of radius $n$ in $\FF$ by  $S_n(e)=\{g\in \FF:~|g|=n\}$. 
Let $\psi$ be any probability density function on $\partial \FF$; namely $\psi \ge 0$ and $\int_{\partial \FF}\psi~d\nu=1$. Define the associated probability measures $\mu_n^\psi$ on $S_n(e)$ given by  $\mu_n^\psi(g)=\int_{O_g}\psi ~d\nu$.

Let $\FF^2 < \FF$ be the subgroup generated by all elements $g$ such that $|g|$ is even. It is a subgroup of index $2$ in $\FF$. Given a probability space $(X,\lambda)$ on which $\FF$ acts by measure-preserving transformations, we let $\EE[f|\FF^2]$ denote the conditional expectation of a function $f \in L^1(X,\lambda)$ on the $\sigma$-algebra of $\FF^2$-invariant sets.

\begin{thm}\label{thm:sector}
Fix any continuous probability density function $\psi$ on the boundary $\partial \FF$. Then in any measure-preserving action of $\FF$ on a standard probability space $(X,\lambda)$,  and any $f\in L^p(X)$ for $1<p<\infty$, the averages $\mu_{2n}^\psi(f) \in L^p(X)$ defined by
$$\mu_{2n}^\psi(f)(x):=\sum_{g \in S_{2n}(e)} f(g^{-1}x)\mu^\psi_{2n}(g)$$
 converge pointwise almost surely and in $L^p$-norm  to $\EE[f|\FF^2]$. Furthermore, pointwise convergence to the same limit holds for any $f$ in the Orlicz space $(L\log L)(X,\lambda)$.
\end{thm}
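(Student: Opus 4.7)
The core idea is to transfer the sphere averages to powers of a single measure-preserving transformation $T$ on a skew product $Y=X\times\partial\FF$, and then apply ergodic theorems for an associated amenable equivalence relation. Since the shadows $\{O_g:g\in S_{2n}(e)\}$ partition $\partial\FF$ and $\mu_{2n}^\psi(g)=\int_{O_g}\psi\,d\nu$ by definition, the sphere average can be rewritten as the boundary integral
$$\mu_{2n}^\psi(f)(x) = \int_{\partial\FF} f\bigl(g_{2n}(\xi)^{-1}x\bigr)\,\psi(\xi)\,d\nu(\xi),$$
where $g_n(\xi):=s_1\cdots s_n$ is the length-$n$ prefix of $\xi=(s_1,s_2,\dots)$. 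A direct calculation using $\nu(O_g)=(2r)^{-1}(2r-1)^{-|g|+1}$ shows that the one-sided shift $\sigma$ on $\partial\FF$ preserves $\nu$, so
$$T(x,\xi):=(s_1^{-1}x,\sigma\xi),\qquad T^n(x,\xi)=(g_n(\xi)^{-1}x,\sigma^n\xi),$$
is measure-preserving on $(Y,m):=(X\times\partial\FF,\lambda\otimes\nu)$. Writing $\tilde f(x,\xi):=f(x)$ yields the compact formula $\mu_{2n}^\psi(f)(x)=\int(\tilde f\circ T^{2n})\,\psi\,d\nu$. The restriction to \emph{even} $2n$ arises because the tail equivalence relation of $T$ identifies $(x_1,\xi_1)$ with $(gx_1,\xi_2)$ only when $g=g_n(\xi_2)g_n(\xi_1)^{-1}$, and this reduced word always has even length, so the parity factor on $X$ is $\FF^2$ rather than $\FF$.

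\textbf{Maximal inequality via geometric covering.} The first substantive step is the maximal inequality $\|\sup_n\mu_{2n}^\psi(|f|)\|_p\le C_p\|f\|_p$ for $1<p<\infty$, together with a Stein-type $L\log L$ endpoint. Two ingredients cooperate. First, $(\partial\FF,\nu)$ is an ultrametric space whose balls are exactly the shadows, so any two shadows are either nested or disjoint; this gives a Vitali-type covering lemma and hence weak-type $(1,1)$, $L^p$ and Stein $L\log L$ bounds for the Hardy--Littlewood operator
$$\mathcal{M}\phi(\xi):=\sup_n\frac{1}{\nu(O_{g_n(\xi)})}\int_{O_{g_n(\xi)}}|\phi|\,d\nu$$
on the boundary. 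Second, the tail equivalence relation $\cR$ of the skew product $T$ is countable and amenable, and its orbit-level averages control the sphere averages on $X$ after Fubini in $\xi$. Continuity of $\psi$ enters here: it permits uniform approximation of $\psi$ by functions constant on shadows of depth $N\to\infty$, so that the weights $\int_{O_g}\psi\,d\nu$ are compatible both with $\mathcal{M}$ and with the natural Folner exhaustion of $\cR$-classes by concentric shadow unions.

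\textbf{Pointwise convergence, identification of the limit, and main obstacle.} On a dense subspace of $L^p$, pointwise convergence of $\mu_{2n}^\psi(f)(x)$ then follows from the Lindenstrauss pointwise ergodic theorem for $\cR$ applied to $\tilde f\cdot\psi\in L^1(Y,m)$, combined with the uniform cylinder approximation of $\psi$. The limit is $\cR$-invariant; since $\cR$-invariant functions of $(x,\xi)$ that factor through $x$ coincide with $\FF^2$-invariant functions on $X$ (by the parity observation above), Fubini after integration in $\xi$ identifies the limit as $\EE[f|\FF^2](x)$. The Banach principle combines the maximal inequality with dense convergence to yield a.s.\ convergence for all $f\in L^p$, and the $L\log L$ assertion follows from the Stein endpoint of the maximal bound. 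The main obstacle is that $(\mu_{2n}^\psi(f))_n$ is a sequence of \emph{individual} sphere averages, not Cesaro or ball averages, so classical amenable ergodic theorems (Hopf--Dunford--Schwartz, Lindenstrauss) do not apply directly; the Vitali covering on the ultrametric $(\partial\FF,\nu)$, together with continuity of $\psi$, is precisely what converts single sphere averages into objects controllable by the amenable orbit structure, and without either ingredient the required maximal inequality breaks down.
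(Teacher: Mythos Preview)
Your reformulation $\mu_{2n}^\psi(f)(x)=\int_{\partial\FF}(\tilde f\circ T^{2n})(x,\xi)\,\psi(\xi)\,d\nu(\xi)$ is correct, as is the parity observation, and the overall architecture---pass to $X\times\partial\FF$, exploit an amenable equivalence relation there, integrate back over the boundary---is exactly the paper's. But there is a genuine gap where you invoke ``the Lindenstrauss pointwise ergodic theorem for $\cR$ applied to $\tilde f\cdot\psi$''. The right-hand side of your identity is a fiber integral of the \emph{iterates} $\tilde f\circ T^{2n}$, and iterates are not F\o lner averages: for fixed $(x,\xi)$ the point $T^{2n}(x,\xi)$ is a single element of the orbit, and no ergodic theorem says $\tilde f\circ T^{2n}$ converges. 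Nor is the fiber $\{x\}\times\partial\FF$ contained in one $\cR$-class---it meets uncountably many---so integrating over $\xi$ is not an orbit average either. The sentence ``orbit-level averages control the sphere averages on $X$ after Fubini in $\xi$'' is precisely the heart of the theorem, but it is asserted, not proved; and the boundary Hardy--Littlewood operator you introduce does not visibly bound $\sup_n\mu_{2n}^\psi(|f|)(x)$, since the function being averaged there lives on $X$, not on $\partial\FF$.

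What the paper supplies is the missing link. The genuine F\o lner averages for $\cR_0(X\times\partial\FF)$ are the \emph{horospherical} sphere averages $\bA_{2n}[\tcS;i(f)](x,\xi)$, which average $f$ over $\{g^{-1}x:g\in H_\xi\cap S_{2n}(e)\}$; these sit inside a single $\cR_0$-class and are shown to be asymptotically invariant, doubling and non-shrinking, so the equivalence-relation ergodic theorem gives their a.e.\ convergence and the weak-type maximal inequality. Integrating $\bA_{2n}[\tcS;i(f)](x,\cdot)$ against $\psi\,d\nu$ then produces a \emph{different} measure $\eta_{2n}^\psi$ on $S_{2n}(e)$, with $\eta_{2n}^\psi(g)$ proportional to $\int_{O'(g)}\psi\,d\nu$ where $O'(g)=O(t_1\cdots t_n)\setminus O(t_1\cdots t_{n+1})$, not $\mu_{2n}^\psi(g)=\int_{O_g}\psi\,d\nu$. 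A separate comparison, via H\"older on $S_{2n}(e)$ together with $\pi_\partial(\mu_{2n}^\psi)\to\psi$ and $\pi_\partial(\eta_{2n}^\psi)\to\psi$ in $L^q$ (and the already-established uniform-sphere ergodic theorem to control $|S_{2n}(e)|^{-1/p'}\|f_{x,2n}\|_{\ell^{p'}}$), shows $|\mu_{2n}^\psi(f)(x)-\eta_{2n}^\psi(f)(x)|\to0$. This two-stage structure---horospherical F\o lner averages first, then an $L^q$ comparison of $\mu_{2n}^\psi$ with $\eta_{2n}^\psi$---is the idea your sketch does not contain.
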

\begin{remark}
In the special case in which the density is identically $1$,  each $\mu_{2n}$ is the uniform average on $S_{2n}(e)$, and the theorem states that even-radius spherical averages converge pointwise a.e. to $\EE[f|\FF^2]$, for all $f\in L^p$, $1 < p < \infty$ and $f\in L\log L$. The proof of Theorem \ref{thm:sector} is completely different and independent of the  previous proofs of this fact in \cite{N1}, \cite{NS} and \cite{Bu}. 
\end{remark}
\begin{remark}
Given $w \in \FF$, we can choose the density $\rho_w=\chi_{O_w}/\nu(O_w)$ to be the normalized characteristic function of a basic compact open subset $O_w$ of $\partial \FF$. Thus, the sequence 
$\mu^{\rho_w}_{2n}$ of uniform averages  on 
 the set of all words of length $2n\ge \abs{w}$ with initial subword $w$ is a pointwise ergodic sequence. It is natural to call these averages (in analogy with the hyperbolic plane) {\em sector averages}. 
\end{remark}
Theorem \ref{thm:sector} is a special case of a more general result, whose statement requires further notation.  For $g\in \FF$, let $\delta_g \in \ell^1(\F)$ be the function $\delta_g(g')=1$ if $g=g'$ and $0$ otherwise. Let $\pi_\partial :\ell^1(\F) \to L^1(\partial \FF,\nu)$ be the linear map satisfying $\pi_\partial(\delta_g)= \nu(O_g)^{-1}\chi_{O_g}$ where $\chi_{O_g}$ is the characteristic function of $O_g$. If  $\mu \in \ell^1(\F)$ and $\mu \ge 0$ then $\pi_\partial(\mu) \ge 0$ and $\|\pi_\partial(\mu)\|_1=\|\mu\|_1$.

\begin{thm}\label{thm:main}
Let $\{\mu_{2n}\}_{n=1}^\infty$ be a sequence of probability measures in $\ell^1(\FF)$ such that $\mu_{2n}$ is supported on the sphere $S_{2n}(e)$. 
Let $1< q < \infty$,  and suppose $\{\pi_\partial(\mu_{2n})\}_{n=1}^\infty$ converges in $L^q(\partial \FF,\nu)$. Let $(X,\lambda)$ be a probability space on which $\FF$ acts by measure-preserving transformations.
If $f\in L^p(X)$, $1<p <\infty$ and $\frac{1}{p} + \frac{1}{q} < 1$, then the sequence $\{\mu_{2n}(f)\}_{n=1}^\infty \subset L^p(X)$ defined by 
$$\mu_{2n}(f)(x) := \sum_{g \in S_{2n}(e)} f(g^{-1}x)\mu_{2n}(g)$$
 converges pointwise almost surely and in $L^p$-norm to $\EE[f|\FF^2]$. Furthermore, if $q=\infty$ and  $\{\pi_\partial(\mu_{2n})\}_{n=1}^\infty$ converges uniformly, then pointwise convergence to the same limit holds for any $f$ in the Orlicz space $(L\log L)(X,\lambda)$. 
\end{thm}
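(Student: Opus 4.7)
The plan is to reduce the theorem to a pointwise ergodic theorem for an amenable equivalence relation $\cR$ on the extended space $X\times\partial\FF$. The starting point is the identity
\[
\mu_{2n}(f)(x) = \int_{\partial\FF} f\bigl((g_\xi^{(2n)})^{-1} x\bigr)\,\pi_\partial(\mu_{2n})(\xi)\,d\nu(\xi),
\]
where $g_\xi^{(2n)}$ denotes the length-$2n$ prefix of the reduced word $\xi$; this is immediate on unfolding $\pi_\partial(\mu_{2n})=\sum_g \mu_{2n}(g)\nu(O_g)^{-1}\chi_{O_g}$ and noting that $\xi\in O_g$ with $|g|=2n$ forces $g_\xi^{(2n)}=g$. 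Define the shift-like map $T(x,\xi):=(s_1^{-1}x,\sigma\xi)$ on $(X\times\partial\FF,\lambda\times\nu)$, where $s_1$ is the first letter of $\xi$ and $\sigma$ is the one-sided shift on $\partial\FF$; the Markov structure of $\nu$ forces $\lambda\times\nu$ to be $T$-invariant. Since $T^{2n}(x,\xi)=((g_\xi^{(2n)})^{-1}x,\sigma^{2n}\xi)$, the identity becomes
\[
\mu_{2n}(f)(x) = \int_{\partial\FF} F\bigl(T^{2n}(x,\xi)\bigr)\,\pi_\partial(\mu_{2n})(\xi)\,d\nu(\xi),\qquad F(x,\xi):=f(x).
\]

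Let $\cR$ be the countable Borel equivalence relation generated by $T^2$ on $X\times\partial\FF$: concretely, $(x,\xi)\sim(y,\eta)$ iff the tails of $\xi$ and $\eta$ eventually coincide (through even shifts) and $y=gx$ for the corresponding element $g\in\FF^2$. Tail equivalence on $\partial\FF$ is hyperfinite, so $\cR$ is amenable; moreover, $\cR$-invariant functions correspond (after projection to the $X$-factor) to $\FF^2$-invariant functions on $X$. Using a Vitali-type covering lemma for the tree-like doubling family of shadows $\{O_g\}$, combined with the asymptotic $\cR$-invariance supplied by $T$, I would prove a maximal inequality and pointwise/$L^p$ convergence on $X\times\partial\FF$ for the operators $V_{2n}F(x,\xi):=\int F(T^{2n}(x,\xi'))\pi_\partial(\mu_{2n})(\xi')\,d\nu(\xi')$, with limit $\EE[F|\cR](x,\xi)=\EE[f|\FF^2](x)$. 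The assumption that $\pi_\partial(\mu_{2n})$ converges in $L^q$ enters here to reduce $V_{2n}$, on a dense class of $F$, to a canonical $\cR$-averaging operator up to an $L^q$-controlled error.

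Since $V_{2n}F(x,\xi)=\mu_{2n}(f)(x)$ is already independent of $\xi$, integrating out the boundary coordinate transfers the convergence back to $X$. The H\"older constraint $\frac{1}{p}+\frac{1}{q}<1$ is precisely what makes this transfer work: H\"older's inequality on $\partial\FF$ gives
\[
|\mu_{2n}(f)(x)|\le\|\pi_\partial(\mu_{2n})\|_q\cdot\bigl(\nu_{2n}|f|^{q'}(x)\bigr)^{1/q'},
\]
where $\nu_{2n}$ is the uniform spherical average and $q'$ the conjugate exponent of $q$, and $\frac{1}{p}+\frac{1}{q}<1$ is equivalent to $p/q'>1$ --- the range in which the spherical maximal function is $L^{p/q'}$-bounded. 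The case $q=\infty$ with uniform convergence removes the H\"older cost and permits a Hopf-type $L\log L\to L^1$ maximal inequality for $\cR$ to be applied directly, yielding the Orlicz statement. The main obstacle will be establishing the amenable-relation ergodic theorem for $\cR$ in the form needed: one must verify the Vitali covering property for shadows uniformly in $n$, and make precise how mere $L^q$-convergence of the boundary densities $\pi_\partial(\mu_{2n})$ (as opposed to uniform or pointwise convergence) suffices to drive the pointwise theorem on the extended space.
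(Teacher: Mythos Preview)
Your overall architecture matches the paper's: pass to the extended space $X\times\partial\FF$, exploit an amenable equivalence relation there, prove an ergodic theorem for suitable averages on that relation, identify the limit as $\EE[f|\FF^2]$ via weak mixing of the boundary action, and finally use a H\"older comparison controlled by $\|\pi_\partial(\mu_{2n})\|_q$ and the uniform spherical averages of $|f|^{q'}$. Your H\"older bound $|\mu_{2n}(f)(x)|\le\|\pi_\partial(\mu_{2n})\|_q(\nu_{2n}|f|^{q'}(x))^{1/q'}$ is correct and is essentially how the paper uses the constraint $\tfrac1p+\tfrac1q<1$.

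The genuine gap is the ``canonical $\cR$-averaging operator'' you invoke but never specify. Your $V_{2n}F(x,\xi)$ does not depend on $\xi$ at all, so it is not an average over an $\cR$-class and no equivalence-relation ergodic theorem applies to it directly. The relation you generate by $T^2$ is the full $\FF^2$-orbit relation on $X\times\partial\FF$, which is amenable (because the boundary action is), but that abstract amenability does not hand you a concrete F\o lner sequence with a doubling/Vitali property tied to the spheres $S_{2n}(e)$. Producing such a sequence is precisely the paper's main move: it passes to the \emph{smaller} horospherical sub-relation $\cR_0$ (pairs $(x,\xi)\sim(gx,g\xi)$ with $g^{-1}\in H_\xi$, i.e.\ $h_\xi(g^{-1})=0$), for which the horospherical spheres $\tcS_{2n}(x,\xi)=\{(gx,g\xi):g^{-1}\in H_\xi\cap S_{2n}(e)\}$ are shown to be F\o lner, non-shrinking and doubling, so that a genuine Vitali covering lemma and a weak-$(1,1)$ maximal inequality hold for $\bA_{2n}[\tcS;\cdot]$. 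The paper then integrates these horospherical averages against a density $\psi$ over $\partial\FF$ to obtain group averages $\eta_{2n}^\psi$ on $S_{2n}(e)$ for which pointwise convergence is already established, and finishes by comparing $\mu_{2n}$ with $\eta_{2n}^\psi$ via H\"older on $S_{2n}(e)$ (not on $\partial\FF$), the error being controlled by $\|\pi_\partial(\mu_{2n})-\pi_\partial(\eta_{2n}^\psi)\|_{L^q(\partial\FF)}\to 0$ times the uniform spherical $q'$-average of $|f|$. Your outline becomes a proof only once you supply an analogue of the horospherical averages; at that point it essentially coincides with the paper's argument.
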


Theorem \ref{thm:sector} follows from Theorem \ref{thm:main}. To see this, fix a continuous probability density  $\psi$ on $\partial \FF$. Then the associated averages $\mu_{n}^\psi$ (defined above) satisfy 
$\lim_{n\to \infty} \pi_\partial(\mu_n^\psi)=\psi$ in $L^q(\partial \FF, \nu)$, for all $1 \le q  \le \infty$. Indeed,  the continuous functions $\pi_\partial( \mu_n^\psi )$ converge to $\psi$ uniformly. Hence Theorem \ref{thm:main} applies. 


There are several natural questions raised by Theorem \ref{thm:main}. For example,  does the conclusion of Theorem \ref{thm:main} hold if the hypothesis that $\mu_{2n}$ is supported on $\{g \in \FF:~|g|=2n\}$ is weakened to the condition $\lim_{n\to\infty} \mu_{2n}(g)=0$ for all $g\in \FF$? Does it hold if the inequality $\frac{1}{p} + \frac{1}{q} <1$ is replaced by the weaker constraint $\frac{1}{p} + \frac{1}{q} \le 1$? What if instead of being convergent in $L^q(\partial \FF,\nu)$, $\{\pi_\partial(\mu_{2n})\}_{n=1}^\infty$ is only required to be pre-compact or norm-bounded?



\subsection{On the ideas behind the proof}

To illustrate our approach, consider the following scenario. Suppose that $G$ is a group and $H<G$ is a subgroup. We say that $H$ has the {\em automatic ergodicity property} if whenever $G$ acts on a probability space $(X,\mu)$ by measure-preserving transformations ergodically then the action restricted to $H$ is also ergodic. In this case, any pointwise ergodic sequence for $H$ is a pointwise ergodic sequence for $G$. If $H$ is amenable then we can use the classical theory to find such sequences in $H$. Then conjugate copies of pointwise ergodic sequences can be averaged to construct additional pointwise ergodic sequences supported on $G$. 

For example, if $G=SL_2(\RR)$ then, by the Howe-Moore Theorem, any closed noncompact subgroup $H<G$ has the automatic ergodicity property. In \cite{BN2} we use the foregoing observation to prove pointwise ergodic theorems by averaging on conjugates of a horospherical (unipotent) subgroup, which is isomorphic to $\RR$. Similar considerations apply to other Lie groups as well. 

To handle free groups we will have to modify this approach by considering an appropriately chosen amenable ``measurable subgroup''. This ``subgroup'' is a probability measure on the space of horospheres containing the identity element. Such horospheres have an intrinsic geometric structure and a natural notion of asymptotically invariant (F\o lner) sequence. We develop variants of the classical covering arguments and establish pointwise convergence for averaging on  F\o lner sequences along  horospheres. The space of all horospheres containing the identity is identifiable with the boundary $\partial \FF$. We show that this ``measurable subgroup'' satisfies an analogue of automatic ergodicity (a more general result is proven in [Bo08] for all word hyperbolic groups). This is related to the fact that the action of $\FF$ on its boundary is weakly mixing [AL05]. By averaging appropriately chosen horospherical F\o lner sequences over the space of horospheres, we obtain that the uniform measures on spheres form a pointwise ergodic sequence for $\FF$. Our approach allows much more general types of averaging sequences to be analyzed similarly, since we can average the horospherical sequences with respect to a variety of measures on the boundary.

\subsection{Outline of the paper}
We begin by proving ergodic theorems for equivalence relations in \S \ref{sec:equivalence}. This involves a direct generalization of classical arguments. In \S \ref{sec:free group review} we review the boundary of $\FF$, horospheres and horofunctions on $\FF$. After these preliminaries, we state a pointwise ergodic theorem for averages along horospheres (\S  \ref{sec:horo1}) and prove it using results of \S \ref{sec:equivalence}. In \S \ref{sec:ergodicity} we turn to ergodicity and periodicity, and prove that an ergodic action of $\FF$ gives rise to a `virtually ergodic' action of the associated measurable subgroup.  In the last section we analyze integration of  the horospherical averages over the space of horospheres and prove Theorem \ref{thm:main}.




\section{An ergodic theorem for equivalence relations}\label{sec:equivalence}
Let $(B,\nu)$ be a standard Borel probability space and $\cR(B) \subset B\times B$  be a Borel equivalence relation (i.e., for all $b,b',b'' \in B$, $(b,b) \in \cR(B)$, $(b,b') \in \cR(B) \Rightarrow (b',b) \in \cR_B$, $(b,b'), (b',b'') \in \cR(B) \Rightarrow (b,b'') \in \cR(B)$), with countable equivalence classes. Let $c$ denote counting measure on $B$ (so $c(E) = \# E ~\forall E \subset B$). The measure $\nu$ on $B$ is {\em $\cR(B)$-invariant} if $\nu\times c$ restricted to $\cR(B)$ equals $c\times \nu$ restricted to $\cR(B)$. A Borel map $\phi: B \to B$ is an {\em inner automorphism} of $\cR(B)$ if it is invertible with Borel inverse and its graph is contained in $\cR(B)$. Let $\Inn(\cR(B))$ denote the group of inner automorphisms. If $\nu$ is $\cR(B)$-invariant then $\phi_*\nu=\nu$ for every $\phi \in \Inn(\cR(B))$. For the rest of this section, we assume $\nu$ is a $\cR(B)$-invariant Borel probability measure on $B$.

A basic example to keep in mind is the following special case: suppose $G$ is a discrete group acting my measure-preserving transformations on $(B,\nu)$. Then the orbit-equivalence relation $\cR(B):=\{(b,gb):~b\in B, g\in G\}$ is such that $\nu$ is $\cR(B)$-invariant. In fact, a result of \cite{FM77} implies that all probability measure-preserving discrete equivalence relations arise from this construction (up to isomorphism).

Suppose that $\cF=\{\cF_n\}_{n=1}^\infty$ is a sequence of functions $\cF_n:B \to 2_\f^B$ (where $2_\f^B$ denotes the space of finite subsets of $B$) such that for each $n$, $\{(b,b'):~b' \in \cF_n(b)\}\subset B \times B$ is a Borel subset of $\cR(B)$. We are concerned with several properties such a sequence could satisfy:
 
\begin{enumerate}
\item A set $\Phi \subset \Inn(\cR(B))$ {\em generates $\cR(B)$} with respect to $\nu$ if for $\nu\times c$ a.e. $(b_1,b_2) \in \cR(B)$ there exists $\phi \in \langle \Phi \rangle$ such that $\phi(b_1)=b_2$ (where $\langle \Phi \rangle$ denotes the group generated by $\Phi$).
\item $\cF$ is {\em asymptotically invariant} (or {\em F\o lner}) with respect to $\nu$ if there exists a countable set $\Phi \subset \Inn(\cR(B))$ which generates $\cR(B)$ such that
$$\lim_{n\to\infty} \frac{|\cF_n(b) \Delta \phi(\cF_n(b))|}{|\cF_n(b)|} =0\quad \forall \phi \in \Phi, \, \textrm{$\nu$-a.e. }  b \in B.$$

\item $\cF$ is {\em non-shrinking} with respect to $\nu$ if there is a constant $C_s>0$ such that for any Borel  $Y \subset B$ and any bounded measurable function $\rho:Y \to \NN$ we have
$$\nu\Big( \bigcup\{ \cF_{\rho(y)}(y):~y\in Y\}\Big) \ge C_s \nu(Y).$$
This property is trivially satisfied if $b \in \cF_n(b)$ for all $n,b$, which is often the case in practice.

\item $\cF$ satisfies the {\em doubling condition} with respect to $\nu$ if there is a constant $C_d>0$ such that for $\nu$-a.e. $b\in B$ and every $n\in \NN$ 
$$ \Big|\bigcup \big\{\cF_m(b'):~ m \le n,  \cF_m(b') \cap \cF_n(b) \ne \emptyset \big\}\Big| \le C_d|\cF_n(b)|.$$
 \end{enumerate}
For a function $f$ on $B$, consider the averages $\bA_n[\cF;f]$ defined by
$$\bA_n[\cF;f](b):=\frac{1}{|\cF_n(b)|} \sum_{b'\in\cF_n(b)} f(b').$$
We are interested in the convergence properties of these averages. To explain what the limit function could be we need a few definitions: a set $E \subset B$ is {\em $\cR(B)$-invariant} if $E \times B \cap \cR(B) = E \times E$. For a Borel function $f$ on $B$, let $\EE[f|\cR(B)]$ denote the conditional expectation of $f$ with respect to the $\sigma$-algebra of $\cR(B)$-invariant Borel sets and the measure $\nu$.

The purpose of this section is to prove: 

\begin{thm}\label{thm:pointwiseeq}
If $\cF$ is asymptotically invariant, non-shrinking and satisfies the doubling condition then $\cF$ is a pointwise ergodic sequence in $L^1$. I.e., for every $f\in L^1(B,\nu)$, $\bA_n[\cF;f]$ converges pointwise a.e. and in $L^1$-norm to $\EE[f|\cR(B)]$ as $n\to\infty$.
\end{thm}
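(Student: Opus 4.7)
The plan is to adapt the classical Calder\'on--Vitali scheme to equivalence relations: (i) prove a weak-type $(1,1)$ maximal inequality for $\cM[\cF;f](b):=\sup_n|\bA_n[\cF;f](b)|$; (ii) verify pointwise convergence to $\EE[f|\cR(B)]$ on a dense subspace of $L^1(B,\nu)$; (iii) conclude pointwise a.e.\ convergence on all of $L^1$ by the Banach principle, with $L^1$-norm convergence following by truncation and the maximal inequality.

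For (i), given $f\geq 0$ in $L^1$ and $\lambda>0$, set $E_\lambda:=\{b:\cM[\cF;f](b)>\lambda\}$ and pick, for each $b\in E_\lambda$, a Borel index $n(b)\in\NN$ witnessing $\sum_{b'\in\cF_{n(b)}(b)}f(b')>\lambda|\cF_{n(b)}(b)|$. A greedy Vitali selection carried out within each equivalence class, processed in decreasing order of $n(b)$, should produce a Borel set $B^*\subset E_\lambda$ with $\{\cF_{n(b)}(b)\}_{b\in B^*}$ pairwise disjoint (as subsets of $B$), while the doubling hypothesis guarantees that the ``parent'' sets $E(b):=\bigcup\{\cF_m(b'):m\leq n(b),\,\cF_m(b')\cap\cF_{n(b)}(b)\neq\emptyset\}$ satisfy $|E(b)|\leq C_d|\cF_{n(b)}(b)|$ and cover $\bigcup_{b\in E_\lambda}\cF_{n(b)}(b)$. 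The $\cR(B)$-invariance of $\nu$, in its mass-transport form
\[
\int_B\sum_{b':(b,b')\in\cR(B)}g(b,b')\,d\nu(b)=\int_B\sum_{b:(b,b')\in\cR(B)}g(b,b')\,d\nu(b')\qquad(g\geq 0),
\]
converts the fibrewise disjointness and cardinality estimates into $\nu$-measure estimates: $\nu\big(\bigcup_{b\in B^*}\cF_{n(b)}(b)\big)=\int_{B^*}|\cF_{n(b)}(b)|\,d\nu(b)$, $\nu\big(\bigcup_{b\in B^*}E(b)\big)\leq C_d\int_{B^*}|\cF_{n(b)}(b)|\,d\nu(b)$, and $\int_{B^*}\sum_{b'\in\cF_{n(b)}(b)}f(b')\,d\nu(b)\leq\|f\|_1$. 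Combined with the selection inequality and the non-shrinking hypothesis, this produces $\nu(E_\lambda)\leq C_s^{-1}\nu\big(\bigcup_{b\in B^*}E(b)\big)\leq C_s^{-1}C_d\lambda^{-1}\|f\|_1$.

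For (ii), bounded $\cR(B)$-invariant functions are fixed by every $\bA_n[\cF;\cdot]$. For $h\in L^\infty$ and $\phi\in\Phi$, since $\phi_*\nu=\nu$,
\[
|\bA_n[\cF;h-h\circ\phi](b)|\leq 2\|h\|_\infty\,\frac{|\cF_n(b)\Delta\phi(\cF_n(b))|}{|\cF_n(b)|}\longrightarrow 0\quad\text{$\nu$-a.e.}
\]
by asymptotic invariance, and $\EE[h-h\circ\phi|\cR(B)]=0$ because $\cR(B)$-invariant sets are $\phi$-invariant. The closed $L^2$-span of bounded invariants together with such coboundaries equals all of $L^2(B,\nu)$ by the standard unitary-representation argument, once the $\langle\Phi\rangle$-invariants are identified $\nu$-a.e.\ with the $\cR(B)$-invariants---precisely what the generating hypothesis on $\Phi$ supplies. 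This is the dense subspace of $L^1(B,\nu)$ on which $\bA_n[\cF;f]\to\EE[f|\cR(B)]$ pointwise, after which the Banach principle closes the argument.

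The main obstacle I anticipate is the Vitali step in (i): implementing the greedy selection of $B^*$ as a globally Borel-measurable operation across all equivalence classes simultaneously (presumably by first partitioning $E_\lambda$ into Borel pieces on which $n(b)$ is constant and running the selection piece by piece), and verifying that the fibrewise pairwise disjointness interacts correctly with the $\cR(B)$-invariance of $\nu$ to yield the measure estimates above. It is here that all three hypotheses---doubling, non-shrinking, and F\o lner/generating---must interlock to deliver a genuinely linear-in-$\|f\|_1$ weak-type constant.
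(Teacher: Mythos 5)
Your proposal follows exactly the paper's strategy: a Wiener--Vitali covering lemma (greedy selection made Borel via a tie-breaking function, doubling to bound the ``parent'' overlap, mass transport to convert fibrewise counting into $\nu$-measure estimates, non-shrinking to return to $\nu(E_\lambda)$) yields the weak-type $(1,1)$ maximal inequality, and a von Neumann-style coboundary/invariants decomposition of $L^2$ supplies the dense set of good functions, after which the Banach principle closes the argument. The one loose end you correctly flag---boundedness of the radius function so the greedy selection terminates---is handled in the paper by proving the inequality first for the truncated operator $\max_{1\le i\le n}\bA_i[\cF;|f|]$ uniformly in $n$.
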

This result generalizes a classical ergodic theorem for amenable groups as follows. Suppose $\cR(B)$ is the orbit-equivalence relation of an amenable group $G$ acting on $(B,\nu)$ by measure-preserving transformations and $\{F_n\}_{n=1}^\infty$ is a sequence of F\o lner subsets of $G$ (so $\lim_{n\to\infty} \frac{|KF_n \Delta F_n|}{|F_n|} = 0$ for any finite $K \subset G$). Let $\cF_n(b):=\{fb:~ f\in F_n\}$. Then $\cF=\{\cF_n\}_{n=1}^\infty$ is asymptotically invariant. The doubling condition for $\{F_n\}_{n=1}^\infty$ can be stated as: $|\cup_{m \le n} F_m^{-1}F_n| \le C_d |F_n|$. This implies that $\cF$ is doubling. It is usually assumed in classical theorems that the identity element $e \in F_n$ for all $n$. This implies $\cF$ is non-shrinking. So the theorem above implies an ergodic theorem for amenable groups with respect to a doubling F\o lner sequence of sets containing the identity element. This is not the most general ergodic theorem known for amenable groups (see e.g., \cite{Li01}).

Theorem \ref{thm:pointwiseeq} is obtained from the next two theorems which are also proven in this section.
\begin{thm}[Dense set of good functions]\label{thm:dense}
If $\cF$ is asymptotically invariant then there exists a dense set $\cG \subset L^1(B)$ such that for all $f \in \cG$, $\bA_n[\cF;f]$ converges pointwise a.e. to $\EE[f|\cR_{ B}]$. Moreover, if $L^1_0(B)$ is the set of all functions $f\in L^1(B)$ with $\EE[f|\cR(B)]=0$ a.e. then there exists a dense set $\cG_0 \subset L^1_0(B)$ such that for all $f\in \cG_0$,  $\bA_n[\cF;f]$ converges pointwise a.e. to $0$.
\end{thm}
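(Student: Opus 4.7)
The plan is to construct $\cG$ as the sum of two subspaces of $L^1(B)$ on which the averages are transparent: the subspace $L^1_{\text{inv}}(B)$ of $\cR(B)$-invariant integrable functions, on which every $\bA_n[\cF;f]$ simply equals $f$, and $\cG_0$, the linear span of simple coboundaries $h - h\circ\phi$ with $h \in L^\infty(B)$ and $\phi$ ranging over the countable generating set $\Phi$ furnished by the asymptotic invariance hypothesis. The invariant case is immediate: since $\{(b,b') : b' \in \cF_n(b)\} \subset \cR(B)$, every $b' \in \cF_n(b)$ satisfies $f(b')=f(b)$, so $\bA_n[\cF;f]\equiv f = \EE[f|\cR(B)]$ for all $n$.

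For a coboundary $f = h - h\circ\phi$ with $h \in L^\infty$ and $\phi \in \Phi$, I would rewrite
$$\bA_n[\cF;f](b) = \frac{1}{|\cF_n(b)|}\left(\sum_{b' \in \cF_n(b)} h(b') \;-\; \sum_{b'' \in \phi(\cF_n(b))} h(b'')\right)$$
by the change of variable $b''=\phi(b')$ in the second sum, which is legitimate since $\phi \in \Inn(\cR(B))$ is a Borel bijection. Cancelling the common contributions on $\cF_n(b) \cap \phi(\cF_n(b))$ bounds the numerator in absolute value by $\|h\|_\infty\cdot |\cF_n(b) \Delta \phi(\cF_n(b))|$, so the F\o lner property forces $\bA_n[\cF;f](b)\to 0$ for $\nu$-a.e.\ $b$. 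Since $\phi_*\nu = \nu$ and $\phi$ maps each $\cR(B)$-invariant Borel set into itself, one also verifies $\EE[h - h\circ\phi|\cR(B)] = 0$, hence $\cG_0 \subset L^1_0(B)$.

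The key step, and the one I expect to be the main obstacle, is showing $\cG_0$ is dense in $L^1_0(B)$. I would establish this by Hahn-Banach duality. Suppose $g \in L^\infty(B)$ annihilates $\cG_0$, i.e.\ $\int_B g(h - h\circ\phi)\,d\nu = 0$ for every $h \in L^\infty$ and every $\phi \in \Phi$. Using $\phi_*\nu=\nu$ to change variables in the second term rewrites this as $\int_B (g - g\circ\phi^{-1})h\,d\nu = 0$ for all $h \in L^\infty$, whence $g = g\circ\phi^{-1}$ $\nu$-a.e.\ for every $\phi \in \Phi$. Intersecting the corresponding null sets over the countable set $\Phi$ and composing, $g$ is simultaneously invariant under every element of the countable group $\langle \Phi\rangle$ on a common full-measure set; the generation property of $\Phi$ then upgrades this to $g$ being constant $\nu\times c$-a.e.\ along $\cR(B)$-classes, i.e.\ essentially $\cR(B)$-invariant. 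For any $f \in L^1_0(B)$ this yields $\int g f\,d\nu = \int g\,\EE[f|\cR(B)]\,d\nu = 0$, contradicting nonzero $g$ on $L^1_0$, so Hahn-Banach forces the density.

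Setting $\cG := \cG_0 + L^1_{\text{inv}}(B)$ completes the proof: any $f \in L^1(B)$ splits as $\EE[f|\cR(B)] + (f - \EE[f|\cR(B)])$ with the second summand in $L^1_0(B)$, approximable in $L^1$-norm by elements of $\cG_0$, so $\cG$ is $L^1$-dense, and the two preceding steps give pointwise a.e.\ convergence of $\bA_n[\cF;f]$ to $\EE[f|\cR(B)]$ on all of $\cG$. The heart of the argument is the passage from invariance of the Hahn-Banach dual vector $g$ under individual generators $\phi \in \Phi$ to invariance along whole $\cR(B)$-classes; this is exactly what the generation hypothesis built into the definition of F\o lner is designed to provide, and clean bookkeeping of the null sets arising for each generator is the only genuinely delicate part.
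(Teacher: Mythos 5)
Your proof is correct and follows essentially the same architecture as the paper's: decompose $L^1(B)$ into $\cR(B)$-invariant functions plus coboundaries, use the F\o lner property for pointwise convergence on coboundaries, and use a duality argument plus the generation hypothesis to establish density. Two minor technical differences: the paper proves density by working in $L^2(B)$ and taking orthocomplements (then passing to $L^1$ via the inclusion $L^2 \hookrightarrow L^1$ on a probability space), whereas you work directly in $L^1$ with Hahn--Banach and $L^\infty$ duality --- both work, and yours is arguably more direct. Also, the paper defines $\cG_0$ with $\phi$ ranging over the full group $\langle\Phi\rangle$, which then requires extending the pointwise-convergence lemma to inverses and compositions of generators, while you take only $\phi \in \Phi$; your choice is cleaner since the annihilator computation is unchanged (any $g$ with $g = g\circ\phi$ for $\phi\in\Phi$ is automatically $\langle\Phi\rangle$-invariant) and the F\o lner estimate is then only needed for actual generators.
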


\begin{thm}[$L^1$ maximal inequality]\label{thm:maximaleq}
Suppose that $\cF$ is non-shrinking and satisfies the doubling condition. For $f\in L^1(B)$, let $\MM[\cF;f]$ be the maximal function
$$\MM[\cF;f]:=\sup_n \bA_n[\cF;|f|].$$
Then there exists a constant $C>0$ such that for any $f \in L^1(B)$ and any $t>0$,
$$\nu\left(\left\{ b\in B:~\MM[\cF;f](b)>t \right\}\right) \le \frac{C||f||_1}{t}.$$
\end{thm}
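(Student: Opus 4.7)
The plan is to adapt the classical Wiener covering proof of the Hardy--Littlewood maximal inequality to the equivalence-relation setting. The role of Euclidean balls is played by the finite sets $\cF_n(b)$ sitting inside the countable equivalence classes; the doubling and non-shrinking conditions supply the usual metric and measure control; and the $\cR(B)$-invariance of $\nu$ functions as a Fubini-type principle that converts counting-measure bookkeeping on each class into $\nu$-measure estimates on $B$.

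I would first truncate: writing $E^{(N)} = \{b : \sup_{n \le N} \bA_n[\cF;|f|](b) > t\}$, it suffices to bound $\nu(E^{(N)})$ uniformly in $N$ and then send $N\to\infty$. For each $b \in E^{(N)}$ pick $n(b) \le N$ maximal with $\sum_{b'\in \cF_{n(b)}(b)} |f(b')| > t\,|\cF_{n(b)}(b)|$ $(\star)$. Next, inside each equivalence class $C$, enumerate $E^{(N)}\cap C$ in order of decreasing $n(\cdot)$ and greedily select a subset $S_C$ such that the sets $\{\cF_{n(b)}(b) : b\in S_C\}$ are pairwise disjoint; set $S = \bigsqcup_C S_C$. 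A Borel enumeration of the classes (available from Feldman--Moore \cite{FM77}) makes $S$ a Borel subset of $B$. The standard Vitali argument then shows that every $b\in E^{(N)}$ has some $b_i \in S$ in its class with $n(b_i)\ge n(b)$ and $\cF_{n(b)}(b)\cap \cF_{n(b_i)}(b_i)\ne \emptyset$, hence $\cF_{n(b)}(b) \subset \tilde\cF^*_{n(b_i)}(b_i)$, where $\tilde\cF^*_n(b)$ is the ``doubling star'' appearing in the doubling hypothesis. Consequently $\bigcup_{b\in E^{(N)}}\cF_{n(b)}(b) \subset \bigcup_{b_i\in S}\tilde\cF^*_{n(b_i)}(b_i)$.

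The core step is a Fubini-type identity encoding $\cR(B)$-invariance: for any Borel $A\subset B$, any Borel $g:A\to 2_\f^B$ whose graph lies in $\cR(B)$, and any nonnegative Borel $h$ on $B$,
$$\int_A \sum_{b'\in g(b)} h(b')\,d\nu(b) \;=\; \int_B h(b')\,\#\{b\in A : b'\in g(b)\}\,d\nu(b').$$
Applying this with $h\equiv 1$ (and using disjointness of the selected family) converts counting estimates on each class into $\nu$-measure identities such as $\nu(\bigsqcup_{b_i\in S}\cF_{n(b_i)}(b_i)) = \int_S |\cF_{n(b)}(b)|\,d\nu(b)$, together with the analogous $\le$-version for the stars. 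Chaining the non-shrinking estimate $\nu(E^{(N)}) \le C_s^{-1}\,\nu(\bigcup_{b\in E^{(N)}}\cF_{n(b)}(b))$, the Vitali covering above, the doubling bound $|\tilde\cF^*_n(b)| \le C_d|\cF_n(b)|$, and finally $(\star)$ combined with the same Fubini identity applied to $h=|f|$, yields $\nu(E^{(N)}) \le (C_d/C_s)\,\|f\|_1/t$ uniformly in $N$.

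The main obstacle is the Borel measurability of the greedy Vitali selection; this is handled by invoking Feldman--Moore, which provides a countable group of inner automorphisms generating $\cR(B)$ and hence a Borel enumeration of the classes, reducing the selection to a countable iteration of Borel operations. Everything else is a transparent translation of the classical covering argument, with $\cR(B)$-invariance serving as the bridge between combinatorics inside each class and $\nu$-measure on $B$.
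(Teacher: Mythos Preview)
Your proposal is correct and follows essentially the same route as the paper's proof: truncate to a bounded range of indices, extract a disjoint subfamily by a Vitali/Wiener greedy selection, and combine non-shrinking, doubling, and the $\cR(B)$-invariance of $\nu$ (used as a Fubini identity) to obtain the constant $C=C_d/C_s$. The only differences are cosmetic: the paper selects the \emph{smallest} witnessing index rather than the largest, secures Borel measurability of the selection via an injective tie-breaker $T:B\to\RR$ instead of invoking Feldman--Moore, and encodes the doubling comparison through an explicit bounded-to-one Borel map $J:\tY\to\tZ$ rather than the star-set inclusion you use.
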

Assuming the two Theorems above, we  prove Theorem \ref{thm:pointwiseeq}.
\begin{proof}[Proof of Theorem \ref{thm:pointwiseeq}]
Let $f\in L^1(B)$. We will show that $\{\bA_n[\cF;f]\}_{n=1}^\infty$ converges pointwise a.e. $\EE[f|\cR(B)]$. After replacing $f$ with $f-\EE[f|\cR(B)]$ if necessary we may assume that $\EE[f|\cR(B)]=0$ a.e.

For $t>0$, let $E_t :=\{b \in B:~ \limsup_{n\to\infty} |\bA_n[\cF;f](b)| \le t\}$. We will show that each $E_t$ has measure one. Let $\epsilon=\frac{t^2}{4}$. According to Theorem \ref{thm:dense}, there exists a function $f_1\in L^1(B)$ with $\|f-f_1\|_1 < \epsilon$ such that $\{\bA_n[\cF;f_1]\}_{n=1}^\infty$ converges pointwise a.e. to $0$. Let $n>0$. Observe that
\begin{eqnarray*}
|\bA_n[\cF;f]| &\le& |\bA_n[\cF;f-f_1]| + |\bA_n[\cF;f_1]|\le \MM[\cF;f-f_1] + |\bA_n[\cF;f_1]|.
\end{eqnarray*}
Let 
$$D:=\big\{b \in B:~\MM[\cF;f-f_1](b) \le \sqrt{\epsilon}\big\}.$$
 Since $\bA_n[\cF;f_1]$ converges pointwise a.e. to zero, for a.e. $b\in D$ there is an $N>0$ such that $n>N$ implies
\begin{eqnarray*}
|\bA_n[\cF;f](b)| &\le& \MM[\cF;f-f_1](b) + |\bA_n[\cF;f_1](b)| \le 2\sqrt{\epsilon} =t.
\end{eqnarray*}
Hence $D \subset E_t$ (up to a set of measure zero). By Theorem \ref{thm:maximaleq}, 
$$\nu(E_t) \ge \nu(D) \ge 1-C\epsilon^{-1/2}\|f-f_1\|_1>1-\sqrt{\epsilon} C=1-\frac{Ct}{2}.$$
For any $s<t$, $E_s \subset E_t$. So $\nu(E_t) \ge \nu(E_s) \ge 1-\frac{Cs}{2}$ for all $s<t$ which implies $ \nu(E_t)=1$. So the set $E:=\cap_{n=1}^\infty E_{1/n}$ has full measure. This implies pointwise convergence of $\{\bA_n[\cF;f]\}_{n=1}^\infty$.

The fact that $\bA_n[\cF;f]$ converges to $\EE[f|\cR(B)]$ in $L^1(B)$ follows from the pointwise result. To see this, observe that it is true if $f\in L^\infty(B)$ by the bounded convergence theorem. Since $L^\infty$ is dense in $L^1$ and $A^\cF_{n}$ is a contraction in $L^1$ this implies the result.
\end{proof}

\subsection{A dense set of good functions}
In this subsection, we prove Theorem \ref{thm:dense} by generalizing von Neumann's classical argument. So assume $\cF$ is asymptotically invariant. Let $\Phi \subset \Inn(\cR(B))$ be a countable set generating $\cR(B)$ witnessing the asymptotic invariance. 
\begin{lem}\label{lem:dense1}
Let $\phi \in \langle \Phi \rangle$, $f\in L^\infty(B)$ and define $f' := f - f\circ \phi$. Then  $\bA_n[\cF;f']$ converges pointwise a.e. to $\EE[f'|\cR(B)]$.
\end{lem}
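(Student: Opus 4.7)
The plan is to prove the lemma in two halves: first identify the limit candidate $\EE[f'|\cR(B)]$ as $0$, then show that $\bA_n[\cF;f'](b)$ tends to $0$ pointwise a.e.\ via a direct symmetric-difference estimate powered by the F\o lner property.

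For the first half, I would argue that any $\psi\in\Inn(\cR(B))$ preserves the $\sigma$-algebra of $\cR(B)$-invariant sets modulo $\nu$-null sets. Indeed, if $E\subset B$ is $\cR(B)$-invariant and $b\in E$, then $(b,\psi(b))\in\cR(B)$ forces $\psi(b)\in E$; likewise $\psi^{-1}(b)\in E$. Hence $\psi(E)=E$ up to nullity, and since $\nu$ is $\cR(B)$-invariant, $\psi_*\nu=\nu$, so $\EE[f\circ\psi|\cR(B)]=\EE[f|\cR(B)]$. Iterating over a decomposition of $\phi\in\langle\Phi\rangle$ as a word in $\Phi\cup\Phi^{-1}$ gives $\EE[f\circ\phi|\cR(B)]=\EE[f|\cR(B)]$, hence $\EE[f'|\cR(B)]=0$.

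For the second half, change variables in the second sum: since $\phi$ is a bijection,
\[
\bA_n[\cF;f'](b)=\frac{1}{|\cF_n(b)|}\Bigg(\sum_{b'\in\cF_n(b)}f(b')-\sum_{b'\in\phi(\cF_n(b))}f(b')\Bigg),
\]
so
\[
|\bA_n[\cF;f'](b)|\le \|f\|_\infty\cdot\frac{|\cF_n(b)\,\Delta\,\phi(\cF_n(b))|}{|\cF_n(b)|}.
\]
Thus it suffices to show that this symmetric-difference ratio tends to $0$ a.e.\ not only for $\phi\in\Phi$ (which is the hypothesis) but for every $\phi$ in the group $\langle\Phi\rangle$ generated by $\Phi$ inside $\Inn(\cR(B))$.

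The main (and essentially only) step is to verify that the set
\[
\Phi_\ast:=\Big\{\psi\in\Inn(\cR(B)):\lim_{n\to\infty}\tfrac{|\cF_n(b)\,\Delta\,\psi(\cF_n(b))|}{|\cF_n(b)|}=0\text{ for $\nu$-a.e.\ }b\Big\}
\]
is a subgroup. Closure under inversion follows from the bijection identity $|\cF_n(b)\,\Delta\,\psi^{-1}(\cF_n(b))|=|\psi(\cF_n(b))\,\Delta\,\cF_n(b)|$. Closure under composition follows from the triangle inequality with $\psi_1(\cF_n(b))$ as a middle set,
\[
|\cF_n(b)\,\Delta\,\psi_1\psi_2(\cF_n(b))|\le|\cF_n(b)\,\Delta\,\psi_1(\cF_n(b))|+|\psi_1(\cF_n(b)\,\Delta\,\psi_2(\cF_n(b)))|,
\]
together with the fact that the bijection $\psi_1$ preserves cardinality, so the second term equals $|\cF_n(b)\,\Delta\,\psi_2(\cF_n(b))|$. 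Since $\Phi$ is countable, the null exceptional sets from each $\phi\in\Phi$ may be unioned into a single null set outside of which the F\o lner condition holds simultaneously for every generator; the subgroup property then lifts this to all of $\langle\Phi\rangle$ on the same full-measure set.

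The only mild subtlety I anticipate is the cardinality identity $|\psi(A)\,\Delta\,\psi(B)|=|A\,\Delta\,B|$ for subsets of a fixed equivalence class under an inner automorphism $\psi$; this is immediate from $\psi$ being a Borel bijection of $B$, so there is no real obstacle. With the subgroup property in hand, the displayed bound drives $\bA_n[\cF;f'](b)$ to $0$ a.e., matching the value of $\EE[f'|\cR(B)]$ established above and completing the proof.
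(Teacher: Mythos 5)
Your proof is correct. The main departure from the paper's argument is where you place the ``bootstrapping'' from $\Phi$ to $\langle\Phi\rangle$: you isolate the purely combinatorial fact that
\[
\Phi_\ast:=\Big\{\psi\in\Inn(\cR(B)):\lim_{n\to\infty}\tfrac{|\cF_n(b)\,\Delta\,\psi(\cF_n(b))|}{|\cF_n(b)|}=0\text{ for a.e.\ }b\Big\}
\]
is a subgroup of $\Inn(\cR(B))$, using the bijection identity for inversion and the symmetric-difference triangle inequality (through $\psi_1(\cF_n(b))$) for composition, and then apply the direct $\|f\|_\infty$-bound only once. The paper instead fixes the direct bound for $\phi\in\Phi$ and bootstraps at the level of functions: it handles $\phi^{-1}$ via the identity $(f-f\circ\phi^{-1})\circ\phi=-(f-f\circ\phi)$ and handles products via the telescoping decomposition $f-f\circ\phi_1\circ\phi_2=(f-f\circ\phi_1)+(f\circ\phi_1-f\circ\phi_1\circ\phi_2)$, then invokes the already-proved cases. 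The two are really the same chain argument seen from dual sides (your triangle inequality is what the telescoping sum unwinds to after the change of variables), but your version is cleaner in the inversion step, where the paper's $f''\circ\phi=-f'$ trick is replaced by the transparent identity $|\cF_n(b)\,\Delta\,\psi^{-1}(\cF_n(b))|=|\psi(\cF_n(b))\,\Delta\,\cF_n(b)|$, and it has the aesthetic advantage of separating the combinatorial lemma (``the F\o lner set is a group'') from the analysis. One very small point to keep in mind: the claim $\EE[f\circ\phi\,|\,\cR(B)]=\EE[f\,|\,\cR(B)]$ uses both that $\phi_\ast\nu=\nu$ and that $\phi$ fixes the invariant $\sigma$-algebra mod null sets; you state both, which is exactly what is needed.
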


\begin{proof}
First, suppose $\phi \in \Phi$. Because $\cF$ is asymptotically invariant, for a.e. $b \in B$,
\begin{eqnarray*}
\lim_{n\to\infty}\big|\bA_n[\cF;f'](b)\big| &=& \lim_{n\to\infty} \Big|\frac{1}{|\cF_n(b)|} \sum_{b' \in \cF_n(b)} f(b') - f(\phi(b'))\Big|\\
 &\le& 2||f||_\infty\lim_{n\to\infty} \frac{|\cF_n(b) \Delta \phi(\cF_n(b))|}{|\cF_n(b)|} =0.
 \end{eqnarray*}
Since $\nu$ is $\cR(B)$-invariant, $\EE[f|\cR(B)] = \EE[f\circ\phi|\cR(B)]$. Hence $\EE[f'|\cR(B)]=0$ a.e. This proves the lemma in the case $\phi \in \Phi$.

If $f'' := f - f\circ \phi^{-1}$ then $f''\circ \phi = -f'$. So $\bA_n[\cF;f'' \circ\phi]$ converges as $n\to\infty$ to the constant $0$ pointwise a.e. Since $\bA_n[\cF_n;f'' - f''\circ\phi]$ also converges as $n\to\infty$ to $0$ pointwise a.e. (by the previous paragraph), it follows that $\bA_n[\cF;f'']$ converges pointwise a.e. to $0$ as $n\to\infty$. So the lemma is true for $\phi \in \Phi^{-1}$. 

Now suppose that the lemma is true for two functions $\phi_1,\phi_2 \in \Inn(\cR(B))$. It suffices to show that if $f \in L^\infty(B)$ and $f' := f - f\circ \phi_1\circ \phi_2$ then $A_n^\cF[f']$ converges to $0$ pointwise a.e.. This follows from the decomposition
$$f' =[ f- f\circ \phi_1] + [f\circ \phi_1 - f\circ \phi_1\circ \phi_2]$$
and the hypotheses on $\phi_1,\phi_2$. 
\end{proof}



\begin{lem}
Let $f$ be a measurable function on $X$ such that for every $\phi \in \langle \Phi\rangle$, $f=f\circ \phi$ a.e. Then $f$ is $\cR(B)$-invariant. I.e., $f(b)=f(b')$ for a.e. $(b,b') \in \cR(B)$ (with respect to $\nu \times c$ where $c$ is counting measure).
\end{lem}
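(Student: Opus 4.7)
The plan is to exploit countability of $\langle\Phi\rangle$ together with the generation hypothesis, which together should reduce the problem to a union of countably many null sets.

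First, since $\Phi$ is countable, the group $\langle\Phi\rangle$ is also countable. For each $\phi\in\langle\Phi\rangle$, the hypothesis gives a $\nu$-null Borel set $N_\phi\subset B$ such that $f(b)=f(\phi(b))$ for every $b\in B\setminus N_\phi$. Let $N:=\bigcup_{\phi\in\langle\Phi\rangle}N_\phi$; by countable additivity, $\nu(N)=0$. For every $b\notin N$ one has $f(b)=f(\phi(b))$ simultaneously for all $\phi\in\langle\Phi\rangle$.

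Next, the generation assumption says precisely that the set
\[
G:=\{(b,b')\in\cR(B):\ \exists\,\phi\in\langle\Phi\rangle,\ \phi(b)=b'\}
\]
has full $\nu\times c$-measure in $\cR(B)$. I then want to remove the remaining exceptional set coming from $N$, namely $(N\times B)\cap\cR(B)$. Using the $\cR(B)$-invariance of $\nu$ (i.e.\ $\nu\times c=c\times\nu$ on $\cR(B)$) and Fubini,
\[
(\nu\times c)\bigl((N\times B)\cap\cR(B)\bigr)=\int_{B}\#\{b:(b,b')\in\cR(B),\ b\in N\}\,d\nu(b'),
\]
and a symmetric computation from the other side shows the expression equals $\int_N\#[b]_{\cR(B)}\,d\nu(b)$, where $\#[b]_{\cR(B)}$ is the (countable) size of the equivalence class of $b$. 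Since $\nu(N)=0$ and the integrand is countable-valued, this integral is $0$.

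Finally, on the full-measure set $G\setminus((N\times B)\cap\cR(B))$, any $(b,b')$ satisfies $b\notin N$ and $b'=\phi(b)$ for some $\phi\in\langle\Phi\rangle$, whence $f(b')=f(\phi(b))=f(b)$. Thus $f(b)=f(b')$ for $\nu\times c$-a.e.\ $(b,b')\in\cR(B)$, which is the definition of $\cR(B)$-invariance. The only mild subtlety in the argument is the Fubini/invariance step to see that a $\nu$-null base set pulls back to a $\nu\times c$-null subset of $\cR(B)$; everything else is the standard countable-union trick.
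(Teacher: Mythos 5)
Your proof is correct and takes essentially the same approach as the paper: form the exceptional sets $B_\phi=\{b: f(b)\neq f(\phi(b))\}$, use countability of $\langle\Phi\rangle$ to conclude their union $N$ is $\nu$-null, and then pass from a $\nu$-null base set to a $\nu\times c$-null subset of $\cR(B)$. The one place where you go beyond the paper is the Fubini/invariance computation showing $(\nu\times c)\bigl((N\times B)\cap\cR(B)\bigr)=0$; the paper asserts the final implication ``But this implies $f(b)=f(b')$ for $\nu\times c$-a.e.\ $(b,b')\in\cR(B)$'' without spelling it out, so your extra step is a welcome clarification rather than a departure. (Minor remark: you do not actually need the flip $\nu\times c = c\times\nu$ here — the direct computation $(\nu\times c)\bigl((N\times B)\cap\cR(B)\bigr)=\int_N \#[b]_{\cR(B)}\,d\nu(b)=0$ already suffices, since each equivalence class is countable and $\nu(N)=0$.)
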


\begin{proof}
For each $\phi \in \langle \Phi\rangle $, let 
$$B_\phi=\{b \in B:~ f(b) \ne f\circ\phi(b)\}.$$
Since $\Phi$ is countable, the group $\langle \Phi \rangle $ is also countable and 
$$\nu\Big(\bigcup_{\phi \in \langle \Phi \rangle} B_\phi \Big)=0.$$
By definition if $b \notin \bigcup_{\phi \in \langle \Phi \rangle} B_\phi$, then $f(b) = f(\phi(b))$ for all $\phi \in \langle \Phi \rangle$. But this implies $f(b)=f(b')$ for $\nu \times c$-a.e. $(b,b') \in \cR(B)$. 
\end{proof}

\begin{proof}[Proof of Theorem \ref{thm:dense}]
Let $\cI \subset L^2(B)$ be the space of $\cR(B)$-invariant $L^2$ functions. That is, $f \in \cI$ if and only if $f(b)=b'$ for a.e. $\big( b,b'\big) \in \cR_{ B}$. Let $\cG_0 \subset L^2(B)$ be the space of all functions of the form $f-f\circ \phi$ for $f\in L^\infty(B)$ and $\phi \in \langle \Phi \rangle$. We claim that the span of $\cI$ and $\cG_0$ is dense in $L^2(B)$. To see this, let $f_*$ be a function in the orthocomplement of $\cG_0$. Denoting the $L^2$ inner product by $\langle \cdot,\cdot \rangle$, we have
$$0=\langle f_*, f-f\circ\phi\rangle = \langle f_*,f\rangle - \langle f_*, f\circ \phi\rangle = \langle f_*,f\rangle - \langle f_* \circ \phi^{-1}, f\rangle = \langle f_*-f_*\circ\phi^{-1},f\rangle$$
for any $f \in L^\infty(B)$ and  $\phi \in \langle \Phi \rangle$. Since $L^\infty(B)$ is dense in $L^2(B)$, we have $f_*=f_*\circ \phi^{-1}$ for all $\phi \in \langle \Phi \rangle$. So the previous lemma implies $f_*$ is $\cR(B)$-invariant; i.e., $f_* \in \cI$. This implies $\cI$ and $\cG_0$ span $L^2( B)$ as claimed. 

By Lemma \ref{lem:dense1} for every $f\in \cI + \cG_0$, $\bA_n[\cF;f]$ converges pointwise a.e. to $\EE[f|\cR(B)]$. Since $\cI+\cG_0$ is dense in $L^2( B)$, which is dense in $L^1( B)$, the first statement of the theorem follows. The second is similar.
\end{proof}

\subsection{A maximal inequality}

This subsection proves Theorem \ref{thm:maximaleq}. We begin with a covering lemma generalizing the classical Wiener covering argument.

\begin{lem}\label{lem:covering}
Suppose $\cF$ satisfies the doubling condition with constant $C_d>0$. Let $\rho:Y \to \NN$ be a bounded measurable function where $Y \subset B$. Then there exists a measurable set $Z \subset Y$ such that 
\begin{enumerate}
\item for all $z_1, z_2 \in Z$, if $z_1\neq z_2$ then $\cF_{\rho(z_1)}(z_1) \cap \cF_{\rho(z_2)}(z_2) = \emptyset$;
\item 
$$C_d \nu \Big( \bigcup_{z\in Z} \cF_{\rho(z)}(z) \Big) \ge  \nu \Big( \bigcup_{y\in Y} \cF_{\rho(y)}(y) \Big).$$
\end{enumerate}
\end{lem}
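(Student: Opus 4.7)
The plan is to construct $Z$ by a greedy selection proceeding from the highest level of $\rho$ downward, and then convert the cardinality-based doubling hypothesis into the required $\nu$-measure bound by invoking the $\cR(B)$-invariance of $\nu$ in a Fubini-like fashion.

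For the construction, since $\rho$ is bounded, let $N := \sup_{y\in Y}\rho(y)$ and set $Y_k := Y\cap \rho^{-1}(k)$ for $1\le k\le N$. I build Borel sets $Z_N, Z_{N-1},\ldots, Z_1$ by descending induction on $k$. Having already chosen $Z_{k+1},\ldots, Z_N$, let
$$Y_k' := \{y\in Y_k :\ \cF_k(y)\cap \cF_{\rho(z)}(z)=\emptyset \text{ for all } z\in Z_{k+1}\cup\cdots\cup Z_N\},$$
and let $Z_k\subset Y_k'$ be a Borel set maximal with respect to the property that $\{\cF_k(z):z\in Z_k\}$ is a pairwise disjoint family. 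Since each $\cF_k(y)$ is finite and contained in the countable class $[y]_{\cR(B)}$, a Borel maximal selection exists by a standard Luzin--Novikov type argument. Set $Z := Z_1\cup\cdots\cup Z_N$. Property (1) then holds by construction.

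The key structural consequence is the \emph{coverage claim}: for every $y\in Y$ there exists $z\in Z$ with $\rho(z)\ge \rho(y)$ and $\cF_{\rho(y)}(y)\cap \cF_{\rho(z)}(z)\ne\emptyset$. Indeed, if $y\in Z$ this is trivial; otherwise the failure to select $y$ at step $k=\rho(y)$ means either $y\notin Y_k'$ (so $\cF_k(y)$ meets some previously selected $\cF_{\rho(z)}(z)$ with $\rho(z)>k$) or maximality of $Z_k$ forced $\cF_k(y)$ to intersect some $\cF_k(z')$ with $z'\in Z_k$. Define the ``doubled'' set $\tilde D(z) := \bigcup\{\cF_m(b'):\ m\le \rho(z),\ \cF_m(b')\cap \cF_{\rho(z)}(z)\ne\emptyset\}$; the doubling hypothesis gives $|\tilde D(z)|\le C_d|\cF_{\rho(z)}(z)|$. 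The coverage claim then yields $\bigcup_{y\in Y}\cF_{\rho(y)}(y)\subset \bigcup_{z\in Z}\tilde D(z)$, since $\cF_{\rho(y)}(y)\subset \tilde D(z)$ whenever $\rho(y)\le \rho(z)$ and the two sets meet.

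To deduce (2), I invoke the identity $(\nu\times c)|_{\cR(B)}=(c\times \nu)|_{\cR(B)}$ twice. Applied to $f(b,b')=\chi_Z(b)\chi_{\cF_{\rho(b)}(b)}(b')$ it yields, using disjointness from (1),
$$\int_Z |\cF_{\rho(z)}(z)|\,d\nu(z) \;=\; \int_B \#\{z\in Z:\ b'\in \cF_{\rho(z)}(z)\}\,d\nu(b') \;=\; \nu\Big(\bigcup_{z\in Z}\cF_{\rho(z)}(z)\Big).$$
Applied to $f(b,b')=\chi_Z(b)\chi_{\tilde D(b)}(b')$, together with the doubling bound, it yields
$$\nu\Big(\bigcup_{z\in Z}\tilde D(z)\Big) \;\le\; \int_Z |\tilde D(z)|\,d\nu(z) \;\le\; C_d\int_Z |\cF_{\rho(z)}(z)|\,d\nu(z) \;=\; C_d\,\nu\Big(\bigcup_{z\in Z}\cF_{\rho(z)}(z)\Big).$$
Combining with the coverage inclusion gives (2). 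The main obstacle I anticipate is the Borel measurability of the greedy selection at each step; I plan to handle this by using that $\rho$ takes only finitely many values and that each $\cF_k(y)$ is a finite Borel subset of the countable class $[y]_{\cR(B)}$, so the selection is within the scope of standard uniformization results for countable Borel equivalence relations.
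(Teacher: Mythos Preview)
Your argument is correct and follows the same Wiener--Vitali covering strategy as the paper: greedy selection prioritizing larger $\rho$, coverage of $\bigcup_{y}\cF_{\rho(y)}(y)$ by the ``doubled'' sets of the selected points, and conversion to a $\nu$-measure inequality via $\cR(B)$-invariance.

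Two implementation differences are worth noting. First, for the Borel selection (which you rightly flag as the delicate point), the paper sidesteps any appeal to uniformization by fixing once and for all an injective Borel map $T:B\to\RR$ and declaring $y_1$ \emph{maximal} in a set $Y'$ when every $y_2\in Y'$ that intersects it has either strictly smaller $\rho$, or equal $\rho$ and smaller $T$-value. This lexicographic rule makes the set of maximal elements manifestly Borel at each stage, with no external machinery; you may find this cleaner than invoking Luzin--Novikov/Feldman--Moore selection. Second, your Fubini-style counting argument for the measure bound is actually a bit more streamlined than the paper's: the paper instead builds an explicit Borel map $J:\tilde Y\to\tilde Z$ whose fibres have size at most $C_d$ (by distributing each $S(z)$ into $\cF_{\rho(z)}(z)$) and then integrates the indicator of the graph of $J$. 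Both routes encode the same idea.
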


\begin{proof}
Let $T:B \to \RR$ be an injective Borel function. We will use $T$ to break `ties' in what follows. 

If $Y' \subset Y$ is a Borel set then we let $M(Y')$  be the set of all `maximal' elements of $Y'$. Precisely, $y_1 \in M(Y')$ if $y_1 \in Y'$ and for all $y_2\in Y'$ different than $y_1$ either
\begin{enumerate}
\item  $\cF_{\rho(y_1)}(y_1) \cap \cF_{\rho(y_2)}(y_2) =\emptyset$,
\item $\rho(y_1) > \rho(y_2)$ or
\item  $\cF_{\rho(y_1)}(y_1) \cap \cF_{\rho(y_2)}(y_2) \ne \emptyset$, $\rho(y_1) = \rho(y_2)$ and $T(y_1) > T(y_2)$.
\end{enumerate}
Because $\rho$ is bounded, if $Y'$ is non-empty then $M(Y')$ is nonempty. 

Let $Y_0:=Y$ and $M_0:=M(Y_0)$. Assuming that $Y_n, M_n \subset Y$ have been defined, let 
$$Y_{n+1}:=\{y \in Y:~ \cF_{\rho(y)}(y) \cap \cF_{\rho(z)}(z)=\emptyset ~\forall z \in M_n\}$$
and $M_{n+1}=M(Y_{n+1})$.  Let 
$$Z= \bigcup_n M_n, ~\tY := \bigcup_{y\in Y} \cF_{\rho(y)}(y), ~\tZ := \bigcup_{z\in Z} \cF_{\rho(z)}(z).$$
By construction, for all $z_1, z_2 \in Z$ if $z_1\neq z_2$ then $\cF_{\rho(z_1)}(z_1) \cap \cF_{\rho(z_2)}(z_2) = \emptyset$. 

For each $z\in M_n$, let 
$$S(z):= \bigcup\{ \cF_{\rho(y)}(y):~ y\in Y_n, ~\cF_{\rho(y)}(y) \cap \cF_{\rho(z)}(z) \ne \emptyset\}.$$
Note that $Z$ is the disjoint union of all the $M_n$'s. So $S$ is well-defined as a function on $Z$. Since each $z\in M_n$ is maximal in $Y_n$, the doubling condition implies
$$|S(z)| \le C_d |\cF_{\rho(z)}(z)|.$$
The construction of $Z$  implies  $\tY = \bigcup_{z\in Z} S(z)$. So there exists a measurable function $J: \tY \to \tZ$ such that
\begin{itemize}
\item for each $y\in \tY$, $J(y) \in \cF_{\rho(z)}(z)$ where $z \in Z$ is an element with $y \in S(z)$;
\item $|J^{-1}(z)| \le C_d$ for all $z\in \tZ$.
\end{itemize}
For example, for each $z \in Z$, let $J_z: S(z) \to \cF_{\rho(z)}(z)$ be a map so that $|J^{-1}_z(z')| \le C_d$ for all $z' \in \cF_{\rho(z)}(z)$. This family of maps can be chosen to vary in Borel manner with respect to $z$. For any $y\in \tY$ define $J(y):=J_z(y)$ where $z \in Z$ is the unique element satisfying
\begin{itemize}
\item $y \in S(z)$,
\item $T(z) \ge T(z')$ for all $z' \in Z$ with $y \in S(z')$.
\end{itemize}

Define $K:B\times B\to \RR$ by $K(y,z)=1$ if $J(y)=z$ and $K(y,z)=0$ otherwise. Since $\nu \times c|_{\cR(B)} = c \times \nu|_{\cR(B)}$,
\begin{eqnarray*}
\nu(\tY) &=& \int \sum_{z\in \tZ} K(y,z) ~d\nu(y) = \int \sum_{y\in \tY} K(y,z) ~d\nu(z)\\
 &=& \int_{\tZ} |J^{-1}(z)| ~d\nu(z) \le C_d \nu(\tZ). \end{eqnarray*}
This implies the lemma.
\end{proof}


\begin{proof}[Proof of Theorem \ref{thm:maximaleq}]
We assume that $\cF$ is non-shrinking with constant $C_s>0$ and satisfies the doubling condition with constant $C_d>0$. For $n>0$, let 
$$\MM_n[\cF;f](b) := \max_{1\le i \le n} \bA_i[\cF;|f|](b).$$
Let $D_{n,t}:= \{b \in B:~\MM_n[\cF;f](b)>t\}$. It suffices to show that $\nu(D_{n,t}) \le \frac{C||f||_1}{t}$ for each $n>0$ where $C>0$ is a constant.

Let $\rho:D_{n,t} \to \NN$ be the function $\rho(b)=m$ if $m$ is the smallest integer such that $\bA_m[\cF;|f|](b)>t$. Let $Z \subset D_{n,t}$ be the subset given by the previous lemma where $Y=D_{n,t}$. Let $\tZ=\cup\{ \cF_{\rho(z)}(z):~z\in Z\}$ and $\tY=\cup\{ \cF_{\rho(y)}(y):~y\in Y\}$. The non-shrinking property of $\cF$ and the previous lemma imply 
$$C_s \nu(D_{n,t}) \le \nu (\tY) \le C_d\nu(\tZ).$$

The disjointness property of $Z$ implies that for every $z \in \tZ$ there exists a unique element $\pi(z) \in Z$ with $z \in \cF_{\rho(\pi(z))}(\pi(z))$. Since $Z \subset D_{n,t}$,
\begin{eqnarray*}
 \nu(D_{n,t}) \le C_s^{-1}C_d  \nu(\tZ) \le \frac{C_d}{C_s t} \int_{\tZ} \bA_{\rho(\pi(z))}[\cF;|f|](\pi(z)) ~dz.
 \end{eqnarray*}
Let $K:B \times B \to \RR$ be the function 
$$K(y,z)=\frac{|f(y)|}{|\cF_{\rho(\pi(z))}(\pi(z))|}$$
if $z \in \tZ$ and $y \in  \cF_{\rho(\pi(z))}(\pi(z))$. Let $K(y,z)=0$ otherwise. Since $\nu \times c|_{\cR(B)} = c \times \nu|_{\cR(B)}$,
\begin{eqnarray*}
\int_{\tZ} |f(y)| ~d\nu(y)&=& \int \sum_{z\in \tZ} K(y,z) ~d\nu(y) = \int \sum_{y\in \tZ} K(y,z) ~d\nu(z)\\
 &=& \int_{\tZ} \bA_{\rho(\pi(z))}[\cF;|f|](\pi(z)) ~d\nu(z).
\end{eqnarray*}
So
\begin{eqnarray*}
 \nu(D_{n,t}) \le \frac{C_d}{C_s t} \int_{\tZ} \bA_{\rho(\pi(z))}[\cF;|f|](\pi(z)) ~dz = \frac{C_d}{C_s t}\int_{\tZ} |f(y)| ~dy \le \frac{C_d||f||_1}{C_s t}.
    \end{eqnarray*}
This proves the theorem with $C= \frac{C_d}{C_s}$.

\end{proof}

\section{The free group and its boundary}\label{sec:free group review}

Let $\FF=\langle a_1,\dots ,a_r \rangle$ be the free group of rank $r\ge 2$. The {\em reduced form} of an element $g\in \FF$ is the expression  $g=s_1\cdots s_n$ with $s_i \in \sS$ and $s_{i+1}\ne s_i^{-1}$ for all $i$. It is unique. Define $|g|:=n$, the length of the reduced form of $g$. The distance function on $\FF$ is defined by $d(g_1,g_2):=|g_1^{-1}g_2|$.

\subsection{The boundary}

The boundary of $\FF$ can be represented in many equivalent forms. For example, it is the set of all geodesic rays in $\Gamma$ emanating from the origin. Alternatively, it can be described as the set of all sequences $\xi=(\xi_1,\xi_2,\ldots) \in \sS^\NN$ such that $\xi_{i+1} \ne \xi_i^{-1}$ for all $i\ge 1$. We denote it be $\partial \FF$. A metric $d_\partial$ on $\partial \FF$ is defined by $d_\partial\big( (\xi_1,\xi_2, \ldots), (t_1, t_2, \ldots) \big) = \frac{1}{n}$ where $n$ is the largest  natural number such that $\xi_i=t_i$ for all $i < n$. If $\{g_i\}_{i=1}^\infty$ is any sequence of elements in $\FF$ and $g_i:=t_{i,1}\cdots t_{i,n_i}$ is the reduced form of $g_i$ then $\lim_i g_i = (\xi_1,\xi_2,\ldots) \in \partial \FF$ if $t_{i,j}$ is eventually equal to $\xi_j$ for all $j$. If $\xi \in \partial \FF$ then we will write $\xi_i \in \sS$ for the elements in $\xi=(\xi_1,\xi_2,\xi_3,\ldots)$.

We define a probability measure $\nu$ on $\partial \FF$ as follows. For every finite sequence $t_1,\ldots, t_n$ with $t_{i+1} \ne t_i^{-1}$ for $1\le i <n$, let
$$\nu\Big(\big\{ (\xi_1,\xi_2,\ldots) \in \partial \FF :~ \xi_i=t_i ~\forall 1\le i \le n\big\}\Big) := |S_n(e)|^{-1}=(2r-1)^{-n+1}(2r)^{-1}.$$
By the Carath\'eodory extension Theorem, this uniquely extends to a Borel probability measure $\nu$ on $\partial \FF$.

\subsection{Horofunctions and horospheres}\label{sec:horospheres}
There is a natural action of $\FF$ on $\partial \FF$ by 
$$(t_1\cdots t_n)\xi := (t_1,\ldots,t_{n-k},\xi_{k+1},\xi_{k+2}, \ldots)$$ where $t_1,\ldots, t_n \in \sS$,  $t_1\cdots t_n$ is in reduced form and $k$ is the largest number $\le n$ such that $\xi_i^{-1} = t_{n+1-i}$ for all $i\le k$.  Observe that if $g=t_1 \cdots t_n$ then the Radon-Nikodym derivative satisfies
$$\frac{d\nu \circ g}{d\nu}(\xi) = (2r-1)^{2k-n}.$$
For $\xi \in \partial \FF$ as above, define the function $h_\xi:\FF \to \ZZ$ by 
$$h_\xi(g):=-\log_{2r-1}\left(\frac{d\nu \circ g^{-1}}{d\nu}(\xi) \right).$$
 For example, if $g=\xi_1\cdots \xi_n$ then $h_\xi(g)=-n$.  More generally, if $g=\xi_1\cdots \xi_n t_1 \cdots t_m$ is in reduced form and $t_1 \ne \xi_{n+1}$ then $h_\xi(g)=m-n$.  Alternatively, if $s_n=\xi_1\xi_2\cdots \xi_n$ then
 $$h_\xi(g) = \lim_{n \to \infty} d(g,s_n) - n.$$
The function $h_\xi$ is the {\em horofunction} associated to $\xi$. Figure \ref{fig:1} illustrates a horofunction.

A {\em horosphere} is any level set of a horofunction. Let $H_\xi$ denote the horosphere $H_\xi:=h_\xi^{-1}(0)$. Note 
$$H_\xi=\left\{g\in \partial \FF:~ \frac{d\nu \circ g^{-1}}{d\nu}(\xi) = 1\right\}.$$
If $\xi=(\xi_1,\xi_2,\ldots)$ then $g \in H_\xi$ if and only if the reduced form of $g$ is $$g=\xi_1\xi_2\cdots \xi_n t_1\cdots t_n$$ for some $t_1,\ldots, t_n \in \sS$ such that $\xi_{n+1}\ne t_1$ (so $g^{-1}\xi = (t_n^{-1},\ldots, t_1^{-1}, \xi_{n+1}, \ldots)$).  $H_\xi$ is called the {\em horosphere} passing through the identity $e$ associated to $\xi$. 

The group $\FF$ acts on horofunctions by $g\cdot h_\xi(f)=h_\xi(g^{-1}f)$ for any $g,f\in \FF, \xi \in \partial \FF$. The group also acts on horospheres by
$$g \cdot H_\xi = \{gf:~f\in H_\xi\}.$$
Observe that if $g \in H_\xi$ then $g^{-1}H_\xi = H_{g^{-1}\xi}$ and $g^{-1}\cdot h_\xi = h_{g^{-1}\xi}$. More generally, if $g \in \FF$ is arbitrary then
$$h_{g\xi} = g\cdot h_\xi - h_\xi(g^{-1}).$$

Let $\cR_0(\partial \FF)$ be the equivalence relation on $\partial \FF$ given by $(\xi,\eta) \in \cR_0(\partial \FF)$ if and only if there is a $g \in \FF$ such that $g\xi = \eta$ and $\frac{d\nu \circ g}{d\nu}(\xi) = 1$. In other words, $g^{-1}\in H_\xi$. By definition, $\nu$ is $\cR_0(\partial \FF)$-invariant.  



\begin{figure}
\begin{center}
 \includegraphics[width=1\textwidth]{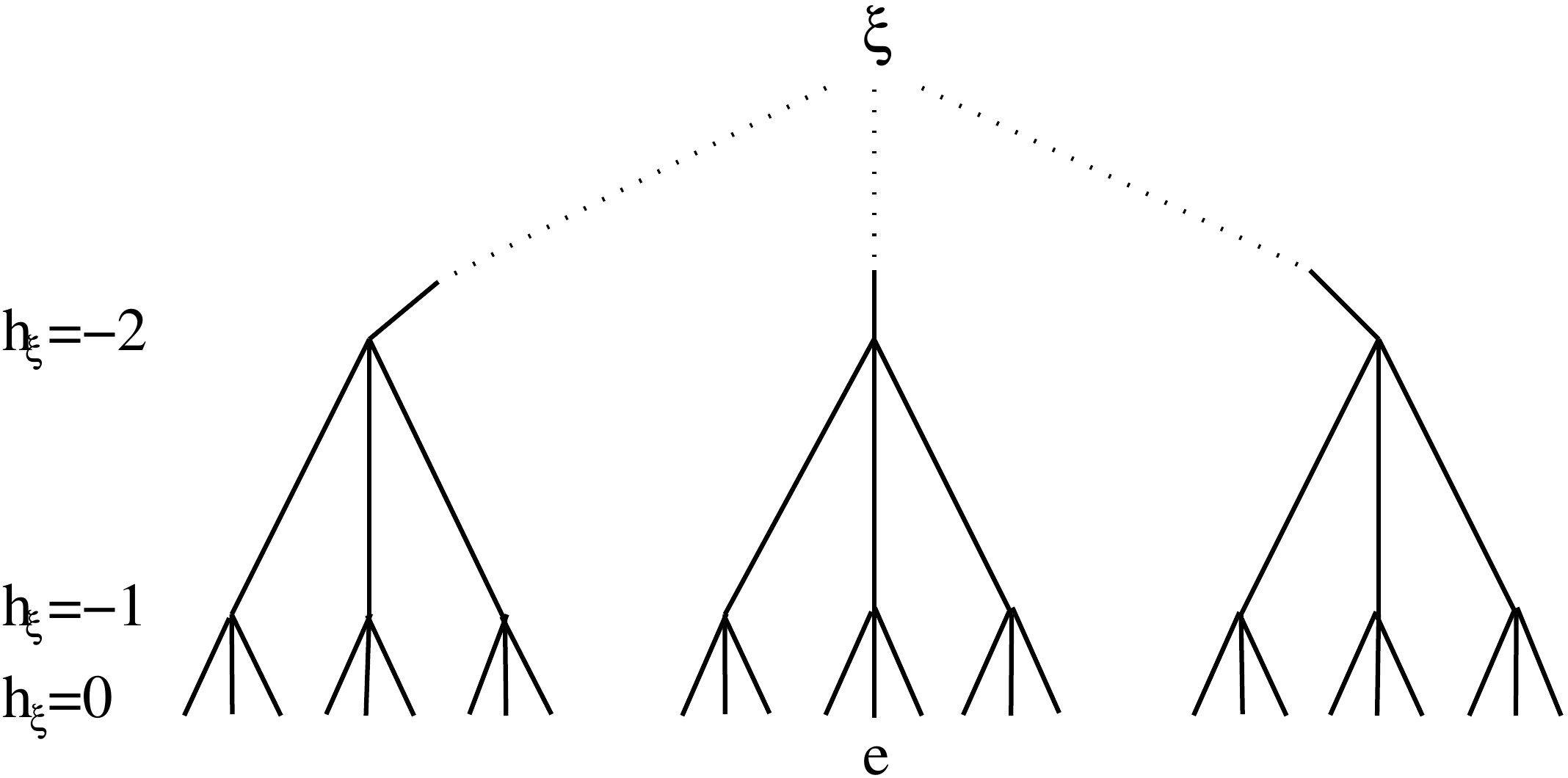}
\caption{The ``upper half space'' model of the rank 2 free group.}
\label{fig:1}
\end{center}
\end{figure}

%
%




\section{Horospherical sphere and ball averages}\label{sec:horo1}


 \subsection{Statement of main convergence result}

Suppose that $\FF$ acts on a standard probability space $(X,\lambda)$ by measure-preserving transformations. Then there is a natural equivalence relation $\cR_0(X\times \partial \FF)$ on $X \times \partial \FF$. Namely, $(x,\xi)$ is $\cR_0(X\times \partial \FF)$-equivalent to $(x',\xi')$ if there exists a $g\in H_\xi$ such that $g^{-1}x=x'$ and $g^{-1}\xi=\xi'$. Because $\nu$ is invariant under $\cR_0(\partial \FF)$, the product measure $\lambda \times \nu$ is $\cR_0(X\times \partial \FF)$-invariant. For a function $f\in L^1(X \times \partial \FF)$, let $\EE[f|\cR_0(X\times \partial \FF)]$ denote the conditional expectation of $f$ on the $\sigma$-algebra of $\cR_0(X\times \partial \FF)$-invariant sets. This is the {\em ergodic mean} of $f$.

For $n \ge 0$ and $(x,\xi) \in X \times \partial \FF$, let 
$$\tcS_n(x,\xi) := \{ (gx,g\xi) \in X \times \partial \FF :~ g^{-1}\in H_\xi, ~|g| = n\}.$$
This is the ``sphere of radius $n$ centered at $(x,\xi)$". Note that, if $n$ is odd and $\xi \in \partial \FF$ then there does not exist a $g^{-1} \in H_\xi$ with $|g|=n$. So $\tcS_n(x,\xi)$ is empty in this case.

The following is proven in the next section.
\begin{thm}[Pointwise convergence for horospherical sphere averages]\label{thm:hs}
For $n \ge 0$ let $\bA_{2n}[\tcS;\cdot]:L^1(X \times \partial \FF) \to L^1(X \times \partial \FF)$ be the operator given by
\begin{equation}
\bA_{2n}[\tcS;f](x,\xi) = \frac{1}{|\tcS_{2n}(x,\xi)|} \sum_{(x',\xi') \in \tcS_{2n}(x,\xi)} f(x',\xi').
\end{equation}
Then for any $f\in L^1(X \times \partial \FF)$, the sequence $\{\bA_{2n}[\tcS;f]\}_{n=1}^\infty$ converges pointwise a.e. and in $L^1$ norm to $\EE[f|\cR_0(X\times \partial \FF)]$. \end{thm}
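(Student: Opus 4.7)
The plan is to reduce the theorem to the equivalence relation ergodic theorem (Theorem \ref{thm:pointwiseeq}) applied to $B = X\times \partial \FF$ with $\mu = \lambda\times\nu$, the countable Borel equivalence relation $\cR_0(X\times \partial\FF)$, and the enlarged averaging family $\cF_n(y) := \tcS_{2n}(y) \cup \{y\}$ for $y \in B,\ n \ge 0$. Adjoining the center $y$ is necessary because $y \notin \tcS_{2n}(y)$ for $n \ge 1$, which causes the non-shrinking hypothesis of Theorem \ref{thm:pointwiseeq} to fail for the bare spheres; with $y$ included, one has $Y \subseteq \bigcup_{y \in Y} \cF_{\rho(y)}(y)$ trivially, so non-shrinking holds with $C_s = 1$. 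Since $|\tcS_{2n}(y)| = (2r-2)(2r-1)^{n-1}$ grows exponentially, $\bA_{2n}[\cF; f] - \bA_{2n}[\tcS; f]$ vanishes pointwise a.e., and so pointwise and $L^1$ convergence for $\cF$ transfers to $\tcS$. The $\cR_0$-invariance of $\mu$ is inherited from the horosphere-defining identity $d(\nu\circ g^{-1})/d\nu \equiv 1$ on $H_\xi$, and a countable generating set $\Phi \subset \Inn(\cR_0)$ is assembled from the partial bijections $\phi_g:(x,\xi) \mapsto (gx, g\xi)$ indexed by $g \in \FF$, defined on $\{(x,\xi): g^{-1} \in H_\xi\}$ and extended to Borel bijections with graph in $\cR_0$.

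The doubling condition I would verify by direct cancellation in $\FF$ using the parameterization $H_\xi \cap S_{2n}(e) = \{\xi_1\cdots\xi_n t_1\cdots t_n\}$ from \S\ref{sec:horospheres} together with the symmetric property $y'' \in \tcS_{2m}(y') \Leftrightarrow y' \in \tcS_{2m}(y'')$, which follows from the cocycle identity $h_{g\eta} = g\cdot h_\eta - h_\eta(g^{-1})$. The key structural fact is that if $\tcS_{2m}(y') \cap \tcS_{2n}(y) \ne \emptyset$ with $m \le n$ then $\tcS_{2m}(y') \subseteq \tcS_{2n}(y)$; consequently, for the enlarged family the doubling union is contained in the horospherical ball $\bigcup_{m\le n} \tcS_{2m}(y) \cup \{y\}$ of size $(2r-1)^n$, yielding a uniform doubling constant bounded by a multiple of $(2r-1)/(2r-2)$.

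The main obstacle is asymptotic invariance. For $\phi_f \in \Phi$ one needs $|\tcS_{2n}(y) \triangle \phi_f(\tcS_{2n}(y))| / |\tcS_{2n}(y)| \to 0$. Via the horofunction cocycle, the condition $\phi_f(gy) \in \tcS_{2n}(y)$ for $gy \in \tcS_{2n}(y)$ reduces to the compatibility $f^{-1} \in H_{g\xi}$, under which a direct cancellation in $\FF$ gives $|fg| = 2n$ and $(fg)^{-1} \in H_\xi$. The $g \in H_\xi^{-1} \cap S_{2n}(e)$ which violate the compatibility are those whose induced boundary point $g\xi$ has first few letters incompatible with $f$, hence a bounded prefix of $g$ is constrained, giving an exceptional set of size $O((2r-1)^{n-c|f|})$, exponentially smaller than $|\tcS_{2n}(y)|$. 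The delicate step is to choose $\Phi$ both large enough to generate $\cR_0$ and consisting of elements with this near-preservation property; a natural candidate is the collection of "double generators" $\phi_{st}$ with $s,t \in \sS$, $t \ne s^{-1}$, with suitable domain restrictions. Once the three hypotheses are established for $\cF$, Theorem \ref{thm:pointwiseeq} yields $\bA_{2n}[\cF; f] \to \EE[f|\cR_0(X\times\partial \FF)]$ pointwise a.e.\ and in $L^1$, and the bounded-correction argument transfers this conclusion to the sphere averages, completing the proof.
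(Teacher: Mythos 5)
Your overall reduction to Theorem~\ref{thm:pointwiseeq} is the right strategy, and several of your choices are reasonable, but the asymptotic invariance step has a genuine gap.

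First, some side remarks. The premise that non-shrinking ``fails for the bare spheres'' is not correct: the paper proves in Lemma~\ref{lem:nonshrink} that $\cS$ is non-shrinking with constant $\frac{2r-2}{2r-1}$, via a mass-transport argument comparing spheres to the disjointified balls produced by the covering lemma. Your workaround (adjoin the center and transfer back) is valid, it is simply unnecessary. Also, the claimed ``key structural fact'' for doubling --- that $\tcS_{2m}(y') \cap \tcS_{2n}(y) \ne \emptyset$ with $m\le n$ forces $\tcS_{2m}(y') \subseteq \tcS_{2n}(y)$ --- fails when $m=n$; the correct move (and the one the paper makes in Lemma~\ref{lem:doubling}) is to enlarge spheres to balls, note that the ball inclusion does hold, and use $|\cB_{2n}| = \frac{2r-1}{2r-2}|\cS_{2n}|$.

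The essential problem is your asymptotic invariance argument. You propose to generate $\cR_0$ by extensions of the partial translations $\phi_g:(x,\xi)\mapsto(gx,g\xi)$, defined only where $g^{-1}\in H_\xi$, and you claim that for $g\in H_\xi^{-1}\cap S_{2n}(e)$ the set violating the compatibility $f^{-1}\in H_{g\xi}$ has size $O((2r-1)^{n-c|f|})$. This count is reversed. Writing $g^{-1}=\xi_1\cdots\xi_n t_1\cdots t_n$ so that $g\xi=(t_n^{-1},\ldots,t_1^{-1},\xi_{n+1},\ldots)$, the compatibility $f^{-1}\in H_{g\xi}$ with $|f|=2k$ pins down the last $k$ of the coordinates $t_1,\ldots,t_n$; hence the \emph{compatible} set has size about $(2r-1)^{n-k}$ out of $(2r-2)(2r-1)^{n-1}$, and it is the compatible set (not the exceptional one) that is exponentially thin. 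For the vast majority of $g\in H_\xi^{-1}\cap S_{2n}(e)$ the original $\phi_f$ is undefined at $gy$, and a generic Borel extension of $\phi_f$ gives no control on $|\tcS_{2n}(y)\,\Delta\,\phi_f(\tcS_{2n}(y))|$, so asymptotic invariance for your candidate $\Phi$ is not established.

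The paper avoids this difficulty entirely by working first on $(\partial\FF,\nu)$ and using \emph{finite-order} inner automorphisms of $\cR_0(\partial\FF)$: globally defined bijections that permute only the first $m$ coordinates of a boundary sequence and fix the tail. These generate $\cR_0(\partial\FF)$, and since $\cS_{2n}(\xi)$ is determined only by coordinates $\ge n$ (Lemma~\ref{lem:basic}), any order-$m$ automorphism with $m\le n-1$ maps $\cS_{2n}(\xi)$ onto itself exactly --- asymptotic invariance is eventually exact rather than approximate. The passage to $X\times\partial\FF$ is then a lift via essential freeness of the boundary action (Proposition~\ref{prop:properties}). You would need to replace your generating family and the attached estimate with something of this kind for the proof to go through.
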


By Theorem \ref{thm:pointwiseeq}, it suffices to prove that the sequence $\tcS:=\{\tcS_{2n}\}_{n=1}^\infty$ is asymptotically invariant, non-shrinking and doubling. This is accomplished in the next section.

\subsection{Balls and spheres}

\begin{defn}
For $g\in \FF$ and $n\ge 0$, let $B_n(g), S_n(g) \subset \F$  denote the ball and sphere respectively of radius $n$ centered at $g$ (with respect to the word metric). For $\xi \in \partial \FF$ and $x\in X$ let
\begin{eqnarray*}
\cB_n(\xi) &:=& \{g\xi:~ g^{-1}\in H_\xi \cap B_n(e)\},\\
\cS_n(\xi) &:=& \{g\xi:~ g^{-1}\in H_\xi \cap S_n(e)\},\\
\tcB_n(x,\xi) &:=& \big\{ (gx,g\xi) \in X \times \partial \FF ~|~ g^{-1}\in H_\xi \cap B_n(e)\big\},\\
\tcS_n(x,\xi) &:=& \big\{ (gx,g\xi)\in X \times \partial \FF~|~ g^{-1}\in H_\xi \cap S_n(e)\big\}.
\end{eqnarray*}
\end{defn}

Our goal is to prove that $\tcS$ is is asymptotically invariant, non-shrinking and doubling (Proposition \ref{prop:properties}). However, it is easier to first prove these properties for $\cB:=\{\cB_n\}_{n=1}^\infty$ and $\cS:=\{\cS_n\}_{n=1}^\infty$ and then transfer them to $\tcB$ and $\tcS$.
\begin{lem}\label{lem:basic}
Let $\xi=(\xi_1,\xi_2,\ldots) \in \partial \FF$. Then for any $n\ge 0$,
$$\cB_{2n}(\xi) = \{ (t_1,t_2, \ldots) \in \partial \FF:~ t_i=\xi_i ~\forall i >n\},$$
$$\cS_{2n}(\xi) = \{ (t_1,t_2,\ldots) \in \partial \FF:~ t_n \ne \xi_n \textrm{ and } t_i=\xi_i~ \forall i >n\}.$$
\end{lem}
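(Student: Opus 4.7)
The plan is to unpack the definitions and perform a direct cancellation computation using the $\FF$-action formula on $\partial\FF$ from Section \ref{sec:horospheres}. The key starting point, already recorded there, is the normal form: $h\in H_\xi\cap S_{2n}(e)$ iff $h=\xi_1\cdots\xi_n s_1\cdots s_n$ in reduced form for some $s_1,\dots,s_n\in\sS$ with $\xi_{n+1}\ne s_1$. In particular $H_\xi$ contains only elements of even length, so $\cS_m(\xi)=\emptyset$ for $m$ odd, and only the even case needs work.

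For the sphere formula, I would take $h=g^{-1}$ in this normal form, so that $g=s_n^{-1}\cdots s_1^{-1}\xi_n^{-1}\cdots\xi_1^{-1}$ in reduced form, and apply the action letter-by-letter to $\xi$. The $n$ rightmost letters of $g$ successively cancel $\xi_1,\dots,\xi_n$, leaving $(\xi_{n+1},\xi_{n+2},\dots)$; then $s_1^{-1}$ fails to cancel $\xi_{n+1}$ precisely because $s_1\ne\xi_{n+1}$; and the remaining letters $s_2^{-1},\dots,s_n^{-1}$ concatenate without further cancellation by reducedness of the $s$-block. This produces
$$g\xi=(s_n^{-1},\dots,s_1^{-1},\xi_{n+1},\xi_{n+2},\dots).$$
Writing $t_i:=s_{n+1-i}^{-1}$ for $1\le i\le n$, I would then translate the conditions on $(s_1,\dots,s_n)$ into conditions on $(t_1,\dots,t_n)$: reducedness of the $s$-block becomes reducedness of the $t$-block; the reducedness condition $s_1\ne\xi_n^{-1}$ at the internal join $\xi_n s_1$ becomes $t_n\ne\xi_n$; and the horospherical condition $s_1\ne\xi_{n+1}$ becomes $t_n\ne\xi_{n+1}^{-1}$, which is nothing but reducedness of the full infinite sequence at position $n$. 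Injectivity of the assignment $h\mapsto g\xi$ is immediate since $(t_1,\dots,t_n)$ recovers $(s_1,\dots,s_n)$, so the displayed description matches the stated formula for $\cS_{2n}(\xi)$.

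For the ball formula, I would decompose $\cB_{2n}(\xi)=\bigcup_{k=0}^{n}\cS_{2k}(\xi)$ (with $\cS_0(\xi)=\{\xi\}$) and apply the sphere result just established. A reduced sequence $(t_1,t_2,\dots)\in\partial\FF$ lies in this union iff for some $k\in\{0,1,\dots,n\}$ one has $t_i=\xi_i$ for all $i>k$; taking $k$ to be the largest index $\le n$ at which $t$ and $\xi$ disagree (or $0$ if they agree on $\{1,\dots,n\}$) shows this is equivalent to $t_i=\xi_i$ for all $i>n$, which is the claimed description.

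There is no real obstacle. The one point requiring attention is keeping straight the two distinct constraints on $s_1$ — the reducedness condition $s_1\ne\xi_n^{-1}$ at the internal join, which becomes the substantive condition $t_n\ne\xi_n$ in the sphere formula, versus the horospherical condition $s_1\ne\xi_{n+1}$, which is absorbed into reducedness of the infinite $t$-sequence and therefore leaves no visible trace in the final statement.
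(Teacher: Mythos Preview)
Your argument is correct. The paper itself omits the proof entirely, writing only ``This is an exercise,'' and your solution is exactly the direct reduced-word cancellation computation that the exercise intends: you correctly identify the normal form $h=\xi_1\cdots\xi_n s_1\cdots s_n$ for elements of $H_\xi\cap S_{2n}(e)$, carry out the cancellation in $g\xi$ to obtain $(s_n^{-1},\dots,s_1^{-1},\xi_{n+1},\dots)$, and translate the two constraints on $s_1$ into the conditions on $t_n$. The ball case via the decomposition $\cB_{2n}(\xi)=\bigcup_{k=0}^n\cS_{2k}(\xi)$ is also fine.
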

\begin{proof}
This is an exercise.
\end{proof}

\begin{defn}
Let $\pi_n:\partial \FF \to S^n$ be the projection map $\pi_n((s_1,s_2,\ldots))=(s_1,\ldots,s_n)$. We will say that a map $\phi:\partial \FF \to \partial \FF$ has {\em order $n$} if it is measurable with respect to the $\sigma$-algebra generated by the inverse images of $\pi_n$. In other words, $\phi$ has order $n$ if for any two boundary points $\xi,\xi' \in \partial \FF$ such that $\pi_n(\xi)=\pi_n(\xi')$, $\phi(\xi)=\phi(\xi')$.  
\end{defn}

\begin{lem}
For any $(\xi,\xi') \in \cR_{0}(\partial \FF)$, there exists a map $\phi \in \Inn(\cR_0(\partial \FF))$ such that $\phi(\xi)=\xi'$ and $\phi$ has order $n$ for some $n<\infty$. Thus the set of finite order inner automorphisms of $\cR_0(\partial \FF)$ are generating.
\end{lem}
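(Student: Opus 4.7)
The plan is to construct $\phi$ explicitly as a piecewise left-multiplication by the group element $g$ witnessing $(\xi,\xi') \in \cR_0(\partial\FF)$, together with its inverse. First I would unpack the structure of the relation: by definition there exists $g \in \FF$ with $g\xi = \xi'$ and $g^{-1} \in H_\xi$. Invoking the description of $H_\xi$ from Section \ref{sec:horospheres}, the reduced form $g^{-1} = \xi_1\cdots\xi_n t_1 \cdots t_n$ has even length $2n$ with $\xi_{n+1} \ne t_1$, and a direct computation of cancellations yields $\xi' = g\xi = (t_n^{-1},\ldots,t_1^{-1},\xi_{n+1},\xi_{n+2},\ldots)$. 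In particular $\xi$ and $\xi'$ agree past position $n$. The degenerate case $\xi = \xi'$ is handled by $\phi = \mathrm{id}$, so assume $\xi \ne \xi'$.

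Next I would set $u := \xi_1\cdots\xi_n$ and $v := t_n^{-1}\cdots t_1^{-1}$; these are distinct length-$n$ reduced words (if $u = v$ then $g^{-1}$ would fail to be reduced), so the cylinders $O_u$ and $O_v$ are disjoint. Define
\begin{align*}
A := O_u \setminus O_{ut_1}, \qquad B := O_v \setminus O_{v\xi_n^{-1}}.
\end{align*}
The central calculation of the proof is to check that left multiplication by $g$ restricts to a Borel bijection $A \to B$: for $\eta \in O_u$, the cancellation in $g\eta$ uses exactly the first $n$ letters of $g$ precisely when $\eta_{n+1} \ne t_1$, in which case $g\eta = (v_1,\ldots,v_n,\eta_{n+1},\eta_{n+2},\ldots)$. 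The constraint that the image lie in $\partial\FF$ translates to $\eta_{n+1} \ne \xi_n^{-1}$, which is automatic for $\eta\in O_u$; and the analogous calculation for $g^{-1}$ gives the inverse bijection $B \to A$.

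With these in hand, set
\begin{align*}
\phi(\eta) := \begin{cases} g\eta, & \eta \in A, \\ g^{-1}\eta, & \eta \in B, \\ \eta, & \text{otherwise.} \end{cases}
\end{align*}
I would then verify three items: that $\phi$ is a Borel involution (immediate from the previous step); that its graph lies in $\cR_0(\partial\FF)$, which reduces to showing $g^{-1} \in H_\eta$ for every $\eta \in A$ (since $\eta_i = \xi_i$ for $i \le n$ and $\eta_{n+1} \ne t_1$, the reduced word $\eta_1\cdots\eta_n t_1 \cdots t_n = g^{-1}$ satisfies the defining condition of $H_\eta$), and symmetrically $g \in H_\eta$ for every $\eta \in B$, while the identity part is trivial; and finally that $\phi(\xi) = g\xi = \xi'$, using $\xi \in A$.

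For the order claim, the sets $A$ and $B$ are each a finite union of cylinders of length $n+1$, and on every cylinder $O_w$ with $|w| = n+1$ the map $\phi$ is either left multiplication by $g^{\pm 1}$ (sending $O_w$ onto another length-$(n+1)$ cylinder) or the identity; in particular $\phi$ permutes cylinders of length $n+1$ in a way measurable with respect to $\pi_{n+1}^{-1}$, giving it finite order. The concluding sentence of the lemma is then immediate: since the single map $\phi$ witnesses the $\cR_0$-equivalence $\xi \sim \xi'$, the family of finite-order inner automorphisms generates $\cR_0(\partial\FF)$ in the sense of Section \ref{sec:equivalence}. The main obstacle is bookkeeping the cancellation conditions so that $g$ genuinely bijects $A$ with $B$; once the boundary-letter constraints $\eta_{n+1} \ne t_1$ on $A$ and $\eta'_{n+1} \ne \xi_n^{-1}$ on $B$ are correctly identified, the rest is routine.
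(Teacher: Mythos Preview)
Your argument is correct and follows essentially the same route as the paper's proof: both produce an inner automorphism that acts as a permutation on length-$(n{+}1)$ (in the paper, length-$m$ for some $m>n$) cylinders while fixing all later coordinates. The paper simply asserts the existence of a bijection $\beta$ on prefixes with the required properties, whereas you construct it concretely as the involution swapping the $A$- and $B$-cylinders via left multiplication by $g^{\pm1}$; your $\phi$ is precisely a specific choice of the paper's $\beta$ with $m=n+1$. One small expository slip: the condition $\eta_{n+1}\ne\xi_n^{-1}$ is what guarantees $g\eta\in B$ (i.e.\ $g\eta\notin O_{v\xi_n^{-1}}$), not what is needed for $g\eta\in\partial\FF$; the latter is the condition $\eta_{n+1}\ne t_1$ defining $A$. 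This does not affect the validity of the argument.
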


\begin{proof}
If $(\xi,\xi') \in \cR_0(\partial \FF)$ and $\xi=(\xi_1,\ldots)$, $\xi'=(\xi'_1,\ldots)$ then by definition of $\cR_0(\partial \FF)$, there is an $n$ such that $i\ge n$ implies $\xi_i = \xi'_i$. Let $m>n$ and let $\beta:S^m \to S^m$ be a bijection such that 
\begin{itemize}
\item $\beta(\pi_m(\xi))=\pi_m(\xi')$, 
\item $\beta$ does not change the last coordinate in the sense that if $\beta(s_1,\ldots,s_m)=(t_1,\ldots,t_m)$ then $s_m=t_m$,
\item $\beta$ maps the subset $\{ (s_1,\ldots,s_m):~ s_{i+1}\ne s_i^{-1} \forall 1\le i <m\}$ into itself.
\end{itemize}
Define $\phi \in \Inn(\cR_0(\partial \FF))$ by $\phi(s_1,s_2,\ldots) = (t_1,t_2,\ldots)$ where $\beta(s_1,\ldots,s_m)=(t_1,\ldots,t_m)$ and $s_i=t_i$ for $i\ge m+1$. It is easy to check that $\phi$ is an inner automorphism of finite order that maps $\xi$ to $\xi'$.
\end{proof}

\begin{lem}\label{lem:asymptoticinvariance}
The sequences $\cB:=\{\cB_{2n}\}_{n=1}^\infty$ and $\cS:=\{\cS_{2n}\}_{n=1}^\infty$ are asymptotically invariant.
\end{lem}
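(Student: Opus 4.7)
The plan is to invoke the preceding lemma to obtain a concrete countable generating set $\Phi$ of finite-order inner automorphisms of $\cR_0(\partial \FF)$, and then verify that each $\phi \in \Phi$ in fact fixes $\cB_{2n}(\xi)$ and $\cS_{2n}(\xi)$ setwise once $n$ exceeds its order, so the symmetric differences in the definition of asymptotic invariance vanish identically for all sufficiently large $n$ (and for every $\xi$, not just $\nu$-a.e.).

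Concretely I would take $\Phi$ to consist of the automorphisms $\phi_\beta$ built in the proof of the preceding lemma: for each $m \ge 1$ and each bijection $\beta : \sS^m \to \sS^m$ that preserves the set of ``reduced'' length-$m$ words and fixes the final coordinate, set
$$\phi_\beta(s_1, s_2, \ldots) = (\beta(s_1, \ldots, s_m),\, s_{m+1}, s_{m+2}, \ldots).$$
For each $m$ there are only finitely many eligible $\beta$, so $\Phi$ is countable; each $\phi_\beta$ lies in $\Inn(\cR_0(\partial \FF))$ by the preceding lemma, and its constructive proof shows that for every $(\xi,\xi') \in \cR_0(\partial \FF)$ some $\phi_\beta \in \Phi$ carries $\xi$ to $\xi'$, so $\Phi$ generates $\cR_0(\partial \FF)$.

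Next fix $\phi = \phi_\beta \in \Phi$ of order $m$, fix $\xi \in \partial \FF$, and take any $n > m$; I claim $\phi(\cB_{2n}(\xi)) = \cB_{2n}(\xi)$ and $\phi(\cS_{2n}(\xi)) = \cS_{2n}(\xi)$ exactly. By Lemma \ref{lem:basic}, $\cB_{2n}(\xi)$ is the cylinder set of reduced sequences $(t_1, \ldots, t_n, \xi_{n+1}, \xi_{n+2}, \ldots)$. Since $m < n$, $\phi$ modifies only $t_1, \ldots, t_m$ and leaves $t_{m+1}, \ldots, t_n, \xi_{n+1}, \xi_{n+2}, \ldots$ unchanged; the two hypotheses on $\beta$ (preservation of reducedness on $\sS^m$ and fixing the $m$-th coordinate) ensure the resulting sequence is still reduced at every index, including the seam between positions $m$ and $m+1$. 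Hence $\phi$ maps $\cB_{2n}(\xi)$ into itself, and invertibility of $\phi$ upgrades this to equality. The identical argument handles $\cS_{2n}(\xi)$, because the additional defining condition $t_n \ne \xi_n$ involves only a coordinate that $\phi$ fixes.

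Consequently $\cB_{2n}(\xi) \,\Delta\, \phi(\cB_{2n}(\xi)) = \emptyset$ for every $\xi \in \partial \FF$ and every $n > m$, and the same holds for $\cS$. The ratios in the asymptotic invariance criterion are therefore identically zero from some finite $n$ onward, far stronger than the required convergence to zero. There is no genuine obstacle in this argument; the only point requiring care is to confirm that the combinatorial conditions on $\beta$ (reducedness and fixing the last coordinate) correctly guarantee that the image sequences remain in the cylinder-set description of Lemma \ref{lem:basic}, which they plainly do.
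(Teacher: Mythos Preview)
Your proposal is correct and follows essentially the same approach as the paper's proof: the paper simply states that $\cB_{2n}$ is invariant under all inner automorphisms of order $\le n$ and $\cS_{2n}$ under those of order $\le n-1$, then invokes the preceding lemma, whereas you spell out explicitly the countable generating family $\Phi=\{\phi_\beta\}$ and verify the setwise invariance using Lemma~\ref{lem:basic}. The only minor difference is that you take $n>m$ rather than $n\ge m$, which is harmless since asymptotic invariance only concerns the limit.
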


\begin{proof}
From Lemma \ref{lem:basic}, it follows that $\cB_{2n}$ is invariant under all inner automorphisms of order $\le n$ while $\cS_{2n}$ is invariant under all inner automorphisms of order $\le n-1$. So the previous lemma implies this lemma. 
\end{proof}

\begin{lem}\label{lem:doubling}
The sequences $\cB$ and $\cS$ are doubling with constants $1$ and $\frac{2r-1}{2r-2}$ respectively.
\end{lem}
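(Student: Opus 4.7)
The plan is to use the explicit descriptions of $\cB_{2n}(\xi)$ and $\cS_{2n}(\xi)$ provided by Lemma \ref{lem:basic} to reduce both doubling estimates to a simple count of reduced words in $\sS$.

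The first step is a tail-nesting observation. If $\cB_{2m}(\xi') \cap \cB_{2n}(\xi) \neq \emptyset$ with $m \le n$, then picking $\eta$ in this intersection, Lemma \ref{lem:basic} forces $\xi'_i = \eta_i = \xi_i$ for every $i > n$. Hence every $\zeta \in \cB_{2m}(\xi')$ satisfies $\zeta_i = \xi'_i = \xi_i$ for $i > n$, so $\cB_{2m}(\xi') \subseteq \cB_{2n}(\xi)$. This immediately yields doubling constant $C_d = 1$ for $\cB$. Applying the identical tail comparison to $\eta \in \cS_{2m}(\xi') \cap \cS_{2n}(\xi)$ shows that, whenever this intersection is nonempty, $\cS_{2m}(\xi') \subseteq \cB_{2m}(\xi') \subseteq \cB_{2n}(\xi)$. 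Hence the doubling bound for $\cS$ reduces to showing $|\cB_{2n}(\xi)| \le \tfrac{2r-1}{2r-2}\,|\cS_{2n}(\xi)|$.

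The cardinalities are then computed directly. By Lemma \ref{lem:basic}, $|\cB_{2n}(\xi)|$ counts tuples $(t_1, \ldots, t_n) \in \sS^n$ with $t_{i+1} \ne t_i^{-1}$ for $i < n$ and $t_n \ne \xi_{n+1}^{-1}$. A short induction on $n$ shows that the number of length-$n$ reduced words ending at any prescribed letter is $(2r-1)^{n-1}$, so $|\cB_{2n}(\xi)| = 2r(2r-1)^{n-1} - (2r-1)^{n-1} = (2r-1)^n$. The description of the sphere also yields the disjoint decomposition $\cB_{2n}(\xi) = \cB_{2(n-1)}(\xi) \sqcup \cS_{2n}(\xi)$ (split by whether $\eta_n = \xi_n$), so $|\cS_{2n}(\xi)| = (2r-1)^n - (2r-1)^{n-1} = (2r-2)(2r-1)^{n-1}$. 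The ratio is exactly $\tfrac{2r-1}{2r-2}$, completing the estimate.

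The entire argument is combinatorial and elementary, with no real obstacle; the one point deserving care is the symmetry statement in the cardinality count, which is cleanest to verify by the induction mentioned above (establishing simultaneously that the ending-letter count is independent of the chosen letter). Everything else follows mechanically from tracking tails of boundary sequences.
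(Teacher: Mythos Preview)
Your proof is correct and follows essentially the same route as the paper: the nesting $\cB_{2m}(\xi')\subseteq\cB_{2n}(\xi)$ for $m\le n$ gives $C_d=1$ for $\cB$, and the sphere case is reduced to the ball case via $\cS_{2m}(\xi')\subseteq\cB_{2n}(\xi)$ together with the ratio $|\cB_{2n}(\xi)|/|\cS_{2n}(\xi)|=\tfrac{2r-1}{2r-2}$. The only difference is cosmetic: the paper simply asserts the ratio from Lemma~\ref{lem:basic}, whereas you spell out the count explicitly.
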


\begin{proof}
It follows from Lemma \ref{lem:basic} that if $\xi, \xi'$ are such that $\cB_{2n}(\xi) \cap \cB_{2m}(\xi') \ne \emptyset$ and $n\ge m$ then $\cB_{2n}(\xi) \supset \cB_{2m}(\xi')$. Therefore
$$\bigcup \big\{\cB_{2m}(\xi'):~ m \le n,  \cB_{2m}(\xi') \cap \cB_{2n}(\xi) \ne \emptyset \big\} = \cB_{2n}(\xi).$$
This implies $\cB$ is doubling (with doubling constant $1$). 

It follows from Lemma \ref{lem:basic} that
\begin{eqnarray}\label{eqn:compare}
\frac{|\cS_{2n}(\xi)|}{|\cB_{2n}(\xi)|} = \frac{2r-2}{2r-1}.
\end{eqnarray}
Therefore,
\begin{eqnarray*}
&&\left|\bigcup \big\{\cS_{2m}(\xi'):~ m \le n,  \cS_{2m}(\xi') \cap \cS_{2n}(\xi) \ne \emptyset \big\}\right|\\
 &\le& \left|\bigcup \big\{\cB_{2m}(\xi'):~ m \le n,  \cB_{2m}(\xi') \cap \cB_{2n}(\xi) \ne \emptyset \big\}\right|\\
&=&|\cB_{2n}(\xi)| \le \frac{2r-1}{2r-2} |\cS_{2n}(\xi)|.
\end{eqnarray*}
This proves $\cS$ is doubling with doubling constant $\frac{2r-1}{2r-2}$. 
\end{proof}

\begin{lem}\label{lem:nonshrink}
The sequences $\cB$ and $\cS$ are non-shrinking.
\end{lem}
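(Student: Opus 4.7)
The plan is to prove non-shrinking trivially for $\cB$ and, for $\cS$, to reduce to the ball case via the Vitali-type covering of Lemma \ref{lem:covering} combined with the Markov structure of $\nu$ under the one-sided shift on $\partial\FF$.

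For $\cB$, Lemma \ref{lem:basic} gives $\xi \in \cB_{2n}(\xi)$ for every $\xi$ and every $n$, so $Y \subset \bigcup_{y \in Y}\cB_{2\rho(y)}(y)$ for any $Y$ and any bounded $\rho$, yielding non-shrinking with $C_s = 1$. For $\cS$, I aim for $C_s = \frac{2r-2}{2r-1}$. Given $Y$ and bounded $\rho \colon Y \to \NN$, apply Lemma \ref{lem:covering} to $\cB$ (which has doubling constant $1$ by Lemma \ref{lem:doubling}) to extract a measurable $Z \subset Y$ with the $\cB_{2\rho(z)}(z)$ pairwise disjoint and
$$\nu\Big(\bigcup_{z \in Z} \cB_{2\rho(z)}(z)\Big) \ge \nu\Big(\bigcup_{y \in Y}\cB_{2\rho(y)}(y)\Big) \ge \nu(Y).$$

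Next, stratify by level $Z_n := \{z \in Z : \rho(z) = n\}$. Lemma \ref{lem:basic} identifies $\cB_{2n}(z)$ with the fiber $(\sigma^n)^{-1}(\sigma^n(z))$ of the one-sided shift $\sigma$ on $\partial\FF$, which is measure-preserving because $\nu$ is the shift-invariant Markov measure placing uniform conditional mass $\frac{1}{2r-1}$ on each of the $2r-1$ points of a $\sigma$-fiber; it also gives $\cS_{2n}(z) = \cB_{2n}(z)\setminus\cB_{2n-2}(z)$. Since the level-$n$ balls are disjoint and $\cB_{2n-2}(z) \subset \cB_{2n}(z)$, the elementary identity $\bigcup_z(A_z\setminus B_z) = \bigcup_z A_z \setminus \bigcup_z B_z$ (for disjoint $A_z$ with $B_z \subset A_z$) yields
$$\bigcup_{z \in Z_n} \cS_{2n}(z) = (\sigma^n)^{-1}\bigl(\sigma^n(Z_n)\bigr) \setminus (\sigma^{n-1})^{-1}\bigl(\sigma^{n-1}(Z_n)\bigr),$$
whose $\nu$-measure equals $\nu(\sigma^n(Z_n)) - \nu(\sigma^{n-1}(Z_n))$ by $\sigma$-invariance.

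The crux is the estimate $\nu(\sigma^{n-1}(Z_n)) \le \frac{1}{2r-1}\nu(\sigma^n(Z_n))$. Disjointness of the level-$n$ balls forces $\sigma^n$ to be injective on $Z_n$, hence each $\sigma$-fiber over a point of $\sigma^n(Z_n)$ contains at most one element of $\sigma^{n-1}(Z_n)$; integrating against the fiber disintegration of $\nu$ then produces the factor $\frac{1}{2r-1}$. Substituting gives $\nu(\bigcup_{z \in Z_n}\cS_{2n}(z)) \ge \frac{2r-2}{2r-1}\nu(\bigcup_{z \in Z_n}\cB_{2n}(z))$, and summing over $n$ (the balls, and hence the spheres, are disjoint across all of $Z$) combined with the Vitali bound yields $\nu(\bigcup_{y \in Y}\cS_{2\rho(y)}(y)) \ge \nu(\bigcup_{z \in Z}\cS_{2\rho(z)}(z)) \ge \frac{2r-2}{2r-1}\nu(Y)$. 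I expect the main obstacle to be the fiber-counting estimate together with the accompanying set-algebraic identity for the sphere union: both depend crucially on the disjointness supplied by the Vitali selection and on the identification of $\cS_{2n}(z)$ as a clean set difference via the shift, with routine Borel-measurability care needed to ensure $\sigma^n(Z_n)$ is Borel, which follows from the injectivity of $\sigma^n|_{Z_n}$ via standard results on injective Borel maps between standard Borel spaces.
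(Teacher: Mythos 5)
Your argument for $\cB$ is the paper's argument, and your argument for $\cS$ is correct but follows a genuinely different route from the paper's. Both proofs begin the same way: apply the Vitali-type Lemma \ref{lem:covering} to the ball sequence $\cB$ (doubling constant $1$) to extract a disjoint subfamily $\{\cB_{2\rho(z)}(z)\}_{z\in Z}$ of balls covering at least as much $\nu$-measure as the original family. The divergence is in how one converts disjoint-ball measure into sphere measure. The paper defines a kernel $F$ on $\cR_0(\partial\FF)$ and applies the mass-transport identity $\nu\times c|_{\cR_0(\partial\FF)} = c\times\nu|_{\cR_0(\partial\FF)}$, obtaining $\nu(Z^\cS) = \frac{2r-2}{2r-1}\nu(Z^\cB)$ directly from the cardinality ratio $|\cS_{2n}|/|\cB_{2n}|$. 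You instead exploit the Markov-shift structure of $(\partial\FF,\nu)$: identify $\cB_{2n}(z)$ with the fiber $(\sigma^n)^{-1}(\sigma^n(z))$, write the sphere-union as a clean set difference of shift preimages, use $\sigma$-invariance of $\nu$, and then bound $\nu(\sigma^{n-1}(Z_n))$ by a fiber-counting/disintegration argument using injectivity of $\sigma^n|_{Z_n}$. Your route is more concrete and leans on the specific coding of $\partial\FF$ as a subshift, whereas the paper's mass-transport computation stays entirely inside the abstract equivalence-relation framework developed in \S\ref{sec:equivalence} — the same framework in which the rest of the section operates — and so generalizes more readily to other boundaries without a Markov-shift presentation. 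Both yield the identical constant $C_s = \frac{2r-2}{2r-1}$. Your appeal to the Lusin–Souslin theorem to handle measurability of $\sigma^n(Z_n)$ is appropriate and necessary; the paper's kernel avoids this point by never taking forward images.
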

\begin{proof}
Let $Y \subset \partial \FF$ and  $\rho:Y \to \NN$ be a bounded Borel function. Let $Y^\cB = \bigcup\{ \cB_{\rho(y)}(y):~y\in Y\}$. Since $Y \subset Y^\cB$ we have $\nu(Y) \le \nu(Y^\cB)$. This proves $\cB$ is non-shrinking.


The proof that $\cS$ is non-shrinking is a little more involved. Let $Y^\cS = \bigcup\{ \cS_{\rho(y)}(y):~y\in Y\}$. By Lemma \ref{lem:covering}, there exists a Borel set $Z \subset Y$ such that
\begin{enumerate}
\item for all $z_1,z_2 \in Z$, $\cB_{\rho(z_1)}(z_1) \cap \cB_{\rho(z_2)}(z_2) = \emptyset$;
\item if $Z^\cB =  \bigcup_{z\in Z} \cB_{\rho(z)}(z) $, then 
$$\nu \Big(Z^\cB \Big) \ge  \nu \Big(Y^\cB \Big).$$
\end{enumerate}
Define $F:\cR_0(\partial \FF) \to \RR$ by $F(\xi,\xi')=0$ if $\xi' \notin Z^\cB$. Otherwise, there exists a unique $z \in Z$ such that $\xi' \in \cB_{\rho(z)}(z)$. In this case, define $F(\xi,\xi')=\frac{1}{|\cB_{\rho(z)}(z)|}$ if $\xi \in \cS_{\rho(z)}(z)$. Let $F(\xi,\xi')=0$ otherwise. Let
$$Z^\cS =  \bigcup_{z\in Z} \cS_{\rho(z)}(z).$$
Since $\nu \times c|_{\cR_0(\partial \FF)} = c \times \nu|_{\cR_0(\partial \FF)}$,
\begin{eqnarray*}
\nu(Z^\cS)&=&\int \Big(\sum_{\xi'} F(\xi,\xi') \Big) ~d\nu(\xi)\\
&=& \int \Big(\sum_{\xi} F(\xi,\xi') \Big) ~d\nu(\xi')  = \int_{Z^\cB} \frac{2r-2}{2r-1} ~d\nu(\xi') = \frac{2r-2}{2r-1}\nu(Z^\cB).
\end{eqnarray*}
The last equality uses (\ref{eqn:compare}). By the inclusions $Z^\cS \subset Y^\cS$ and $Y \subset Y^\cB$, we have
$$\nu(Y^\cS) \ge \nu(Z^\cS) = \frac{2r-2}{2r-1}\nu(Z^\cB) \ge \frac{2r-2}{2r-1} \nu(Y^\cB) \ge \frac{2r-2}{2r-1}\nu(Y).$$
This proves $\cS$ is non-shrinking.

\end{proof}

\begin{prop}\label{prop:properties}
The sequences $\tcB:=\{\tcB_{2n}\}_{n=1}^\infty$ and $\tcS:=\{\tcS_{2n}\}_{n=1}^\infty$ are asymptotically invariant, non-shrinking and doubling.
\end{prop}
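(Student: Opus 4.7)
The strategy is to transfer each of the three properties from $\cB=\{\cB_{2n}\}$ and $\cS=\{\cS_{2n}\}$ (already established in Lemmas~\ref{lem:asymptoticinvariance}, \ref{lem:doubling}, \ref{lem:nonshrink}) to their lifts $\tcB$ and $\tcS$ via the projection $\pi\colon X\times\partial\FF\to\partial\FF$ to the second coordinate. The central observation is that $\pi$ is injective on every $\cR_0(X\times\partial\FF)$-equivalence class for $\lambda\times\nu$-a.e.\ point: if $(x_1,\xi)\sim(x_2,\xi)$ then $x_1=g^{-1}x_2$ for some $g\in H_\xi$ with $g\xi=\xi$, and since every nonidentity element of $\FF$ is hyperbolic on $\partial\FF$ with only two fixed points, the set of $\xi$ with nontrivial $\FF$-stabilizer is $\nu$-null. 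Consequently, for a.e.\ $(x,\xi)$ the map $(hx,h\xi)\mapsto h\xi$ is a bijection of $\tcB_{2n}(x,\xi)$ onto $\cB_{2n}(\xi)$ and of $\tcS_{2n}(x,\xi)$ onto $\cS_{2n}(\xi)$, so cardinalities agree.

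Doubling for $\tcB$ and $\tcS$ then follows at once: the union appearing in the doubling condition lies in a single equivalence class, so $\pi$ sends it bijectively to the corresponding union on $\partial\FF$, whose cardinality is bounded using Lemma~\ref{lem:doubling}. Non-shrinking for $\tcB$ is trivial with constant $1$, since $(x,\xi)\in\tcB_n(x,\xi)$ (take $g=e$). For $\tcS$ I repeat the argument of Lemma~\ref{lem:nonshrink} in the product: apply the covering Lemma~\ref{lem:covering} to the now-doubling family $\tcB$ to obtain $Z\subset Y$ with pairwise disjoint $\tcB_{\rho(z)}(z)$ and $(\lambda\times\nu)(Z^{\tcB})\ge(\lambda\times\nu)(Y^{\tcB})\ge(\lambda\times\nu)(Y)$, and then use the pointwise identity $|\tcS_{2n}(x,\xi)|/|\tcB_{2n}(x,\xi)|=(2r-2)/(2r-1)$ together with the $\cR_0(X\times\partial\FF)$-invariance of $\lambda\times\nu$ (i.e.\ Fubini on the equivalence relation) to compute $(\lambda\times\nu)(Z^{\tcS}) = \frac{2r-2}{2r-1}(\lambda\times\nu)(Z^{\tcB})$, yielding $C_s=(2r-2)/(2r-1)$.

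For asymptotic invariance I lift each finite-order inner automorphism $\phi_0$ of $\cR_0(\partial\FF)$---a countable generating collection exists by the lemma preceding Lemma~\ref{lem:asymptoticinvariance}---to an inner automorphism $\tphi$ of $\cR_0(X\times\partial\FF)$ by $\tphi(x,\xi):=(g_\xi^{-1}x,\,\phi_0(\xi))$, where $g_\xi$ is the a.e.\ unique element of $H_\xi$ with $g_\xi^{-1}\xi=\phi_0(\xi)$. These lifts generate $\cR_0(X\times\partial\FF)$ since $\pi$ maps each equivalence class bijectively onto the corresponding $\cR_0(\partial\FF)$-class a.e. If $\phi_0$ has order $m$, Lemma~\ref{lem:basic} shows that $\phi_0$ preserves both $\cB_{2n}(\xi)$ and $\cS_{2n}(\xi)$ setwise for every $n\ge m$; since $\tphi(\tcB_{2n}(x,\xi))$ and $\tcB_{2n}(x,\xi)$ both lie in the equivalence class of $(x,\xi)$ and both project under $\pi$ to $\cB_{2n}(\xi)$, injectivity of $\pi$ on that class forces $\tphi(\tcB_{2n}(x,\xi))=\tcB_{2n}(x,\xi)$ exactly, and similarly $\tphi(\tcS_{2n}(x,\xi))=\tcS_{2n}(x,\xi)$ for $n\ge m$. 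This gives eventual exact---hence in particular asymptotic---invariance. The main delicate point I anticipate is the measurability and well-definedness of $\xi\mapsto g_\xi$ on a set of full $\nu$-measure, which reduces to a routine Borel selection avoiding the null set of boundary points with nontrivial $\FF$-stabilizer.
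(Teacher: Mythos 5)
Your proposal is correct and takes essentially the same route as the paper: both rest on the essential freeness of $\FF\curvearrowright(\partial\FF,\nu)$ to conclude that the projection $X\times\partial\FF\to\partial\FF$ is injective on a.e.\ $\cR_0(X\times\partial\FF)$-class, hence bijective from $\tcB_{2n}(x,\xi)$ (resp.\ $\tcS_{2n}(x,\xi)$) onto $\cB_{2n}(\xi)$ (resp.\ $\cS_{2n}(\xi)$), and then lift each inner automorphism $\phi$ of $\cR_0(\partial\FF)$ to $\tphi(x,\xi)=(g_\xi x, g_\xi\xi)$ with $g_\xi$ the a.e.\ unique group element realizing $\phi$ on $\xi$, exactly as the paper does. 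The only (harmless) departures are presentational: you make explicit that finite-order automorphisms give \emph{exact} eventual setwise invariance of $\tcB_{2n}$, $\tcS_{2n}$ rather than just asymptotic invariance, and you spell out the non-shrinking argument for $\tcS$ that the paper leaves as "similar to the previous lemma."
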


\begin{proof}
The action of $\FF$ on $(\partial \FF,\nu)$ is essentially free: 
$$\nu\left( \{ \xi \in \partial \FF:~\exists g\in \FF\setminus\{e\} \textrm{ such that } g\xi=\xi\}\right) = 0.$$
For any inner automorphism $\phi \in \Inn(\cR_0(\partial \FF))$, there is an inner automorphism $\tphi \in \Inn(\cR_0(\partial \FF \times X))$ defined by $\tphi(x,\xi)=(gx,g\xi)$ where $g\in \FF$ is the unique element such that $\phi(\xi)=g\xi$. This is well-defined on a set of full measure.
 
 Let $\Phi \subset \Inn(\cR_0(\partial \FF))$ be a countable set that generates $\cR_0(\partial \FF)$ and let $\tPhi=\{\tphi:~\phi \in \Phi\}$. Then for $\nu$-a.e. $\xi \in \partial \FF$ if $(x,\xi)$ is $\cR_0(\partial \FF \times X)$-equivalent to $(x',\xi')$  there is a unique $g \in \FF$ with $(gx,g\xi)=(x',\xi')$ and a $\phi \in \langle \Phi \rangle$ with $\phi(\xi)=\xi'$. Then $\tphi(x,\xi)=(x',\xi')$ so $\tPhi$ is generating. 
 
The projection map $\partial \FF \times X \to \partial \FF$ maps $\tcB_{2n}(x,\xi)$ to $\cB_{2n}(\xi)$ and $\tphi(\tcB_{2n}(x,\xi))$ to  $\phi(\cB_{2n}(\xi))$ bijectively. So the asymptotic invariance of $\cB$, proven in Lemma \ref{lem:asymptoticinvariance}, implies the asymptotic invariance of $\tcB$. The proof that $\tcS$ is asymptotically invariant is similar.
 
For any $(x,\xi) \in \partial \FF \times X$, the projection map $\partial \FF \times X \to \partial \FF$ maps 
$$\cup \{\tcB_{2n}(x',\xi'):~\tcB_{2n}(x',\xi') \cap \tcB_{2n}(x,\xi)\ne \emptyset\} \mapsto \cup \{\cB_{2n}(\xi'):~\cB_{2n}(\xi') \cap \cB_{2n}(\xi)\ne \emptyset\}$$
bijectively. So the doubling property of $\cB$, proven in Lemma \ref{lem:doubling}, implies $\tcB$ is doubling. The proof that $\tcS$ is doubling is similar.
  
It is obvious that $\tcB$ is non-shrinking. The proof that $\tcS$ is non-shrinking is similar to the proof of the previous lemma.
 \end{proof}

\section{Automatic ergodicity}\label{sec:ergodicity}

Let $\FF$ act on a standard probability space $(X,\lambda)$ by measure-preserving transformations. Let $\FF^2$ be the subgroup of $\FF$ generated by words of length 2, which has index 2 in $\FF$. For any $f\in L^1(X)$, let $\EE[f|\FF^2] \in L^1(X)$ be the conditional expectation of $f$ on the $\sigma$-algebra of $\FF^2$-invariant measurable sets.

 
For $f\in L^1(X)$, define $i(f) \in L^1(X\times \partial \FF)$ by $i(f)(x,\xi)=f(x)$. The map $f \mapsto i(f)$ isometrically embeds in $L^1(X)$ into $L^1(X \times \partial \FF)$. 

The purpose of this section is to prove:
\begin{thm}\label{thm:expectation1}
 For any $f\in L^1(X)$, $\EE\big[i(f)|\cR_0(X\times \partial \FF)\big]= i(\EE[f|\FF^2])$.
\end{thm}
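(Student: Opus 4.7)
The plan is to show $\EE[i(f)|\cR_0(X\times \partial \FF)] = i(\EE[f|\FF^2])$ by checking that the candidate $i(\EE[f|\FF^2])$ is already $\cR_0$-invariant and then verifying the appropriate duality against bounded $\cR_0$-invariant test functions. The two key ingredients will be that every element of a horosphere $H_\xi$ has even word length, hence lies in $\FF^2$, and the ergodicity of the tail equivalence relation $\cR_0(\partial \FF)$ on the boundary.

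First I would check that $i(\EE[f|\FF^2])$ is $\cR_0$-invariant. If $(x,\xi) \sim_{\cR_0} (x',\xi')$ with $x'=g^{-1}x$ for some $g \in H_\xi$, then the reduced form $g=\xi_1\cdots\xi_n t_1\cdots t_n$ from Section \ref{sec:horospheres} has even length, so $g \in \FF^2$ and $\EE[f|\FF^2](x')=\EE[f|\FF^2](g^{-1}x)=\EE[f|\FF^2](x)$, giving the invariance. This is the easy direction.

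Next I would reduce to a statement about $\FF^2$-invariance of $\xi$-averages. By the previous step and uniqueness of conditional expectation, the theorem is equivalent to the identity
$$\int (f-\EE[f|\FF^2])(x)\,\tilde\phi(x)\,d\lambda(x) = 0, \qquad \tilde\phi(x) := \int_{\partial\FF} \phi(x,\xi)\,d\nu(\xi),$$
holding for every bounded $\cR_0$-invariant $\phi$ on $X \times \partial \FF$. By the defining property of $\EE[\cdot|\FF^2]$, this is implied by $\tilde\phi$ being $\FF^2$-invariant.

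To prove the $\FF^2$-invariance of $\tilde\phi$, I would first observe that for any $\FF^2$-invariant $\chi \in L^\infty(X)$, the function $\Phi_\chi(\xi) := \int \chi(x)\phi(x,\xi)\,d\lambda(x)$ is $\cR_0(\partial \FF)$-invariant: for $g \in H_\xi \subset \FF^2$,
$$\Phi_\chi(\xi) = \int \chi(x)\phi(g^{-1}x, g^{-1}\xi)\,d\lambda(x) = \int \chi(gy)\phi(y, g^{-1}\xi)\,d\lambda(y) = \Phi_\chi(g^{-1}\xi),$$
using $\cR_0$-invariance of $\phi$, $\lambda$-invariance of the $\FF$-action, and $\FF^2$-invariance of $\chi$. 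Ergodicity of $\cR_0(\partial \FF)$ on $(\partial \FF, \nu)$, itself a consequence of Theorem \ref{thm:hs} applied to the trivial $\FF$-action on a point, forces $\Phi_\chi$ to be a.e. constant, so $\EE[\phi(\cdot,\xi)|\FF^2]$ is a.e. independent of $\xi$ and equals $\EE[\tilde\phi|\FF^2]$. The main obstacle is upgrading this (which only controls the $\FF^2$-conditional expectation of $\tilde\phi$) to the full $\FF^2$-invariance of $\tilde\phi$. After reducing to the case where $\FF^2$ acts ergodically on $X$ via the standard ergodic decomposition, this becomes the statement that $\cR_0$ is ergodic on $X \times \partial \FF$. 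This automatic ergodicity, alluded to in Section \ref{sec:state}, would be proved using the weak mixing of the $\FF$-action on $(\partial \FF, \nu)$ established by Adams-Lyons to rule out any $\cR_0$-invariant function in the $\FF^2$-orthogonal part of the $x$-direction, thereby yielding $\tilde\phi = \EE[\tilde\phi|\FF^2]$.
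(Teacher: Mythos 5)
Your reduction is set up sensibly: the observation that every $g\in H_\xi$ has even length (so $H_\xi\subset\FF^2$) does give that $i(\EE[f|\FF^2])$ is $\cR_0$-invariant, and testing against bounded $\cR_0$-invariant $\phi$ correctly reduces the theorem to showing that $\tilde\phi(x)=\int_{\partial\FF}\phi(x,\xi)\,d\nu(\xi)$ is $\FF^2$-invariant. The $\Phi_\chi$ computation is also correct as far as it goes. But the step you flag as the ``main obstacle'' is where the proposal has a genuine gap, and the proposed fix --- appealing to weak mixing of $\FF^2 \curvearrowright (\partial \FF, \nu)$ --- cannot close it. What is actually needed is the paper's Theorem \ref{thm:expectation2}: the $\sigma$-algebra of $\cR_0(X\times\partial\FF)$-invariant sets coincides with the $\sigma$-algebra of sets invariant under the diagonal $\FF^2$-action. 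Weak mixing on the boundary only controls the full $\FF^2$-orbit equivalence relation; it yields (Lemma \ref{thm:expectation3}) that the diagonal $\FF^2$-action on $X\times\partial\FF$ is ergodic when $\FF^2\curvearrowright X$ is. But $\cR_0$ is a strictly smaller sub-relation --- $(x,\xi)$ is $\cR_0$-equivalent to $(g^{-1}x,g^{-1}\xi)$ only for $g$ in the horosphere $H_\xi$, not for arbitrary $g\in\FF^2$ --- so a priori it could admit many more invariant functions. Promoting $\cR_0$-invariance to $\FF^2$-diagonal invariance is the geometric core of the paper, proved in \S \ref{sec:proof2} via Propositions \ref{prop:reduction} and \ref{prop:ve} (constructing measure-preserving maps $\psi,\omega$ on $\partial\FF$ whose graphs lie in $\cR_0$ and which approximately implement $\Par^2$), and weak mixing plays no role in that half of the argument. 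As you wrote it, the $\Phi_\chi$ step only uses $\FF^2$-invariant $\chi$ (the substitution $\chi(gy)=\chi(y)$ requires it), so it controls only the projection of $\phi$ onto the $\FF^2$-invariant subspace of $L^2(X)$ and places no constraint on the orthogonal complement, which is exactly where the content lies.

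A secondary issue: Theorem \ref{thm:hs} applied to the trivial action gives convergence of horospherical averages to $\EE[\cdot|\cR_0(\partial\FF)]$, which is not the same as ergodicity of $\cR_0(\partial\FF)$ on $(\partial\FF,\nu)$. That ergodicity is true, but in this paper it is obtained by combining Theorem \ref{thm:expectation2} (with $X$ a point) with the ergodicity of $\FF^2\curvearrowright\partial\FF$ coming from [AL05] (Aaronson--Lema\'nczyk, not Adams--Lyons), or would need a separate tail 0--1 argument. So the proposal as written takes for granted precisely the two nontrivial inputs --- Theorem \ref{thm:expectation2} and ergodicity of $\cR_0(\partial\FF)$ --- that the paper establishes, rather than proving them.
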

Similar results were proven in [Bo08] for all word hyperbolic groups.
 
We remark that it is necessary to consider the action of $\FF^2$ rather than $\FF$. For example, if $X$ is a two-point set, $\lambda$ is the uniform probability measure and all generators $\{a_1,\ldots,a_r\}$ of $\FF$ act nontrivially on $X$ then the action of $\FF$ on $X$ is ergodic but the equivalence relation $\cR_0(X\times \partial \FF)$ on $X \times \partial \FF$ is not.

In the next section we prove:
\begin{thm}\label{thm:expectation2}
For any $F\in L^1(X \times \partial \FF)$, $\EE[F|\FF^2] = \EE[F|\cR_0(X\times \partial \FF)]$ where $\EE[F|\FF^2]$ denotes the conditional expectation of $F$ on the $\sigma$-algebra of $\FF^2$-invariant sets where $\FF^2$ acts on $X \times \partial \FF$ diagonally: $g\cdot (x,\xi)=(gx,g\xi)~\forall g\in \FF, x\in X,\xi\in\partial \FF$.
\end{thm}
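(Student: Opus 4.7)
The plan is to show that the $\sigma$-algebra of $\cR_0(X\times\partial \FF)$-invariant sets coincides, modulo $\lambda\times\nu$-null sets, with the $\sigma$-algebra of $\FF^2$-invariant sets under the diagonal action. The easy direction uses the observation that every $g \in H_\xi$ has even word length and therefore lies in $\FF^2$, so each $\cR_0$-move $(x,\xi) \mapsto (g^{-1}x, g^{-1}\xi)$ is a translation by an element of $\FF^2$. Thus $\cR_0(X\times \partial \FF)$ is contained in the $\FF^2$-orbit equivalence relation, whence every $\FF^2$-invariant set is $\cR_0$-invariant and the tower property gives $\EE[F|\FF^2]=\EE[\EE[F|\cR_0]|\FF^2]$. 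It therefore suffices to prove the harder statement that every $\cR_0$-invariant $G \in L^1(X \times \partial \FF)$ is $\FF^2$-invariant $\lambda \times \nu$-a.e.

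I would reduce by ergodic decomposition to the case when $\FF \curvearrowright (X,\lambda)$ is ergodic. Then $\FF^2 \curvearrowright X$ is either ergodic or splits into exactly two ergodic components distinguished by the sign character $g \mapsto (-1)^{|g|}$, so it suffices to treat each $\FF^2$-ergodic piece $X_0 \subseteq X$ and show that every $\cR_0$-invariant function on $X_0 \times \partial \FF$ is a.e.\ constant.

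The key ergodic input is the identification of $\cR_0(\partial \FF)$ with the tail equivalence relation on the shift space $\partial \FF \subset \sS^{\NN}$: two boundary sequences are $\cR_0$-equivalent precisely when they eventually agree, as can be checked directly from the reduced-form description of elements of $H_\xi$ in Section~\ref{sec:horospheres}. By the Kolmogorov zero-one law applied to the Markov measure $\nu$, tail events are $\nu$-trivial, so $\cR_0(\partial \FF)$-invariant functions on $\partial \FF$ are a.e.\ constant. The weak mixing of $\FF \curvearrowright (\partial \FF, \nu)$ from [AL05], together with the ergodicity of $\FF^2 \curvearrowright X_0$, upgrades this to ergodicity of the diagonal $\FF^2$-action on $X_0 \times \partial \FF$. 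A cocycle-theoretic argument for the horofunction cocycle $(k,\xi) \mapsto h_\xi(k^{-1})$ valued in $2\ZZ$ then propagates this ergodicity to $\cR_0(X_0\times\partial \FF)$ itself, forcing $\cR_0$-invariant functions to be a.e.\ constant and hence $\FF^2$-invariant.

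The main obstacle is this final step: upgrading boundary-level ergodicity of tail equivalence to ergodicity of the twisted relation $\cR_0(X_0 \times \partial \FF)$, in which moves of $\xi$ are coupled with corresponding $\FF^2$-translations of $x$. A naive product argument fails because the horofunction cocycle surjects onto $2\ZZ$, so each $\FF^2$-orbit in $X_0 \times \partial \FF$ decomposes into infinitely many distinct $\cR_0$-classes indexed by the cocycle value; showing that the resulting $\sigma$-algebra is nonetheless trivial modulo null sets requires using weak mixing and the cocycle structure carefully, in the spirit of the automatic ergodicity results of [Bo08] cited in Section~1.
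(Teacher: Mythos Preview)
Your proposal correctly handles the easy inclusion (every $\FF^2$-invariant set is $\cR_0$-invariant) and correctly identifies that the content lies in the reverse direction. But the argument has a genuine gap at exactly the step you yourself flag as the ``main obstacle'': you never carry out the cocycle-theoretic argument that would upgrade ergodicity of the diagonal $\FF^2$-action on $X_0\times\partial\FF$ to ergodicity of the sub-relation $\cR_0(X_0\times\partial\FF)$. Invoking [Bo08] ``in spirit'' is not a proof. Since the horofunction cocycle surjects onto $2\ZZ$, $\cR_0$ is a proper sub-relation of the $\FF^2$-orbit relation, and there is no general principle guaranteeing that sub-relations of ergodic relations remain ergodic; this step is the entire substance of the theorem, and your outline leaves it open.

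The paper's proof takes a completely different route and uses neither ergodic decomposition nor weak mixing (weak mixing appears only in Lemma~\ref{thm:expectation3}, which is a separate ingredient for Theorem~\ref{thm:expectation1}). Instead the paper introduces the parent map $\Par(x,\xi)=\xi_1^{-1}(x,\xi)$ and reduces the claim to showing that every $\cR_0$-invariant $f$ satisfies $f\circ\Par^2=f$ a.e.\ (Proposition~\ref{prop:reduction}). This is then established by an explicit combinatorial construction on $\partial\FF$ (Proposition~\ref{prop:ve}): for each $n>5$ one builds maps $\omega,\psi$ whose graphs lie in $\cR_0(\partial\FF)$, with $d_\partial(\omega\xi,\xi)=1/n$ and $d_\partial(\psi\omega\xi,\Par_\partial^2\omega\xi)=1/n$, and with uniformly bounded $L^1$-operator norms. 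Lifting these to maps $\Omega_n,\Psi_n,\Phi_n$ on $X\times\partial\FF$ with $\Phi_n,\Psi_n\in\Inn(\cR_0)$, and using $f\circ\Psi_n=f$ and $f\circ\Par^2\circ\Phi_n=f\circ\Par^2$, one gets
\[
\|f-f\circ\Par^2\|_1\le\|f-f\circ\Omega_n\|_1+\|f\circ\Psi_n\circ\Omega_n-f\circ\Par^2\circ\Phi_n\|_1\to 0.
\]
This concrete approximation argument sidesteps entirely the Mackey-range analysis your outline would require.
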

Theorem \ref{thm:expectation1} follows from the theorem above and the next lemma.
\begin{lem}\label{thm:expectation3}
For any $f\in L^1(X)$, $\EE[i(f)| \FF^2] = i\big(\EE[f|\FF^2]\big)$.
\end{lem}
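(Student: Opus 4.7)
The plan is to verify directly that $G := i(\EE[f|\FF^2])$ satisfies the two defining properties of the conditional expectation $\EE[i(f)|\FF^2]$. Write $\mathcal{A}$ for the $\sigma$-algebra of Borel subsets of $X\times\partial\FF$ invariant under the diagonal $\FF^2$-action, and $\mathcal{A}_0$ for the $\FF^2$-invariant Borel subsets of $X$. Since $\EE[f|\FF^2](gx)=\EE[f|\FF^2](x)$ $\lambda$-a.e.\ for every $g\in\FF^2$, the function $G(x,\xi)=\EE[f|\FF^2](x)$ does not depend on $\xi$ and is invariant under the map $(x,\xi)\mapsto(gx,g\xi)$; hence $G$ is $\mathcal{A}$-measurable.

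The substantive step is the verification that $\int_A i(f)\,d(\lambda\times\nu)=\int_A G\,d(\lambda\times\nu)$ for every $A\in\mathcal{A}$. Writing $A_x:=\{\xi:(x,\xi)\in A\}$ and applying Fubini, both sides reduce to integrals of the form $\int_X(\cdot)(x)\,\nu(A_x)\,d\lambda(x)$. So it suffices to show that $x\mapsto\nu(A_x)$ is, as an element of $L^\infty(X,\lambda)$, $\mathcal{A}_0$-measurable; for then applying the defining property of $\EE[f|\FF^2]$ to the bounded $\mathcal{A}_0$-measurable function $\nu(A_\cdot)$ gives $\int_X f\cdot\nu(A_\cdot)\,d\lambda=\int_X\EE[f|\FF^2]\cdot\nu(A_\cdot)\,d\lambda$, which is the required equality.

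I would prove the stronger statement that $\nu(A_x)\in\{0,1\}$ for $\lambda$-a.e.\ $x$, which immediately implies $\mathcal{A}_0$-measurability: the diagonal $\FF^2$-invariance of $A$ forces $A_{gx}=gA_x$, and because $\FF^2$ acts on $(\partial\FF,\nu)$ by quasi-invariant transformations sending $\nu$-null sets to $\nu$-null sets, the values $0$ and $1$ are preserved, so $\nu(A_{gx})=\nu(A_x)$. To prove the dichotomy, disintegrate $\lambda=\int\lambda_\omega\,d\mu(\omega)$ into $\FF^2$-ergodic components (which exists since $\lambda$ is $\FF^2$-invariant). I would then establish that, for $\mu$-a.e.\ $\omega$, the diagonal $\FF^2$-action on $(X\times\partial\FF,\lambda_\omega\times\nu)$ is ergodic; granting this, $(\lambda_\omega\times\nu)(A)\in\{0,1\}$ for every $A\in\mathcal{A}$, and a second application of Fubini gives $\nu(A_x)\in\{0,1\}$ for $\lambda_\omega$-a.e.\ $x$.

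The hard part will be the joint ergodicity on $(X\times\partial\FF,\lambda_\omega\times\nu)$, which is delicate because $\nu$ is only $\FF^2$-quasi-invariant rather than invariant. My plan is to first inherit weak mixing of the Koopman representation of $\FF^2$ on $L^2(\partial\FF,\nu)$ from the weak mixing of the $\FF$-action on $\partial\FF$ cited in the introduction, using that $[\FF:\FF^2]=2$ so a non-trivial finite-dimensional $\FF^2$-invariant subspace of $L^2_0(\partial\FF,\nu)$ would induce one under $\FF$; then combine this with $\FF^2$-ergodicity of $(X,\lambda_\omega)$ via a quasi-invariant analog of Furstenberg's weak-mixing argument, phrased in terms of the unitary Koopman cocycle on $L^2(\partial\FF,\nu)$ rather than direct measure invariance. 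This passage from measure-preserving to measure-class-preserving joint ergodicity is the technical heart of the proof.
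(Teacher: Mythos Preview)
Your proposal is correct and follows the same overall strategy as the paper: reduce to the case where $\FF^2 \curvearrowright (X,\lambda)$ is ergodic, and then show that the diagonal $\FF^2$-action on $(X\times\partial\FF,\lambda\times\nu)$ is ergodic by invoking weak mixing of $\FF^2 \curvearrowright (\partial\FF,\nu)$. Your framework via the defining property of conditional expectation, Fubini, and the dichotomy $\nu(A_x)\in\{0,1\}$ is a clean way to package the reduction that the paper leaves implicit with its ``without loss of generality we may assume the action of $\FF^2$ is ergodic''.

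The one substantive difference is how the $\FF^2$-weak mixing on $(\partial\FF,\nu)$ is obtained. You propose to restrict the $\FF$-weak mixing to the index-$2$ subgroup via an induction argument on finite-dimensional invariant subspaces of the Koopman representation. This is the standard route for probability-preserving actions, but as you note, $\nu$ is only quasi-invariant under $\FF^2$, and the equivalence ``weakly mixing $\Leftrightarrow$ no finite-dimensional Koopman subrepresentation in $L^2_0$'' is not available in the non-singular setting without further argument; this is indeed delicate and would need to be carried out carefully. The paper bypasses this entirely: it observes that $(\partial\FF,\nu)$ is not just the Poisson boundary of $(\FF,\mu)$ but also the Poisson boundary of $(\FF^2,\mu*\mu)$, since the support of $\mu*\mu$ generates $\FF^2$. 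Then the Aaronson--Lema\'nczyk result \cite{AL05} applies \emph{directly} to the $\FF^2$-action, giving weak mixing (in the sense that the product with any ergodic p.m.p.\ system is ergodic) without any restriction argument. This is shorter and avoids the non-singular subtleties you flag.
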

\begin{proof}
Without loss of generality, we may assume that the action of $\FF^2$ on $(X,\lambda)$ is ergodic. It suffices to show that the diagonal action $\FF^2 \cc X \times \partial \FF$ is  ergodic. 

Let $\mu$ be the uniform measure on the generating set $\sS$. Then the action of $\FF$ on the Poisson boundary of the random walk determined by $\mu$ is canonically identified with $\FF \cc (\partial \FF,\nu)$ (e.g., see [Ka00]). Note that the support of the convolution $\mu*\mu$ generates $\FF^2$. Hence the action of $\FF^2$ on the Poisson boundary of the random walk determined by $\mu*\mu$ is identified with the action of $\FF^2$ on $(\partial \FF,\nu)$. By [AL05], this action is weakly mixing. This implies the diagonal action of $\FF^2$ on $(\partial \FF \times X, \nu \times \lambda)$ is ergodic.
\end{proof}



\subsection{Proof of automatic ergodicity}\label{sec:proof2}

We now turn to the proof of Theorem \ref{thm:expectation2}. We begin with a few definitions and a reduction.

\begin{defn}
Let $\cI \subset L^1(X\times \partial \FF)$ be the set of functions $f$ that are invariant under the relation, i.e., for all $\phi \in \Inn(\cR_0(X\times \partial \FF))$, $f\circ\phi=f$. 

Let $\cI_2 \subset L^1(X\times \partial \FF)$ be the set of functions $f$ such that for all $g\in \FF^2$ and a.e. $(x,\xi) \in X\times \partial \FF$, $f(x,\xi)=f(g (x,\xi))$. Theorem \ref{thm:expectation2} is equivalent to the statement $\cI=\cI_2$. 
\end{defn}

\begin{defn}

For $(x,\xi) \in X \times \partial \FF$, recall that $\xi=(\xi_1,\xi_2,\ldots)$. Define $\Par(x,\xi) \in X \times \partial \FF$ by $\Par(x,\xi)=\xi_1^{-1}(x,\xi)$. More generally, if $n\ge 1$ then let $\Par^n(x,\xi):=(\xi_1\cdots \xi_n)^{-1}(x,\xi)$.
\end{defn}

\begin{lem}
Let $f\in L^1(X\times \partial \FF)$. If $f \circ \Par^2 =f$ a.e. then $f\in \cI_2$. 
\end{lem}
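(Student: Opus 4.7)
The plan is to upgrade the pointwise identity $f\circ \Par^2 = f$ to full $\FF^2$-invariance in two stages. First I will iterate the hypothesis to obtain $f\circ\Par^{2k}=f$ almost everywhere for every $k\ge 0$. Then, for each fixed $h\in\FF^2$ of reduced length $2n$, I will verify by direct computation a ``telescoping'' identity $\Par^{2n}\circ h = \Par^{2j}$, where $j$ is the cancellation length in the product $h\xi$. This reduces $\FF^2$-invariance to iterated $\Par^{2k}$-invariance.

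\textbf{Step 1.} Let $N\subset X\times \partial \FF$ be a null set off which $f = f\circ\Par^2$. Since the $\FF$-action on $(X\times\partial\FF,\lambda\times\nu)$ is quasi-invariant (with Radon--Nikodym derivative a power of $2r-1$), the enlargement $\tilde N := \bigcup_{g\in\FF} g\cdot N$ is again null, and is $\FF$-invariant by construction. For $(x,\xi)\notin \tilde N$, the decomposition $\Par^{2k}(x,\xi) = \Par^2\bigl(\Par^{2(k-1)}(x,\xi)\bigr)$ combined with the fact that $\Par^{2(k-1)}(x,\xi) = (\xi_1\cdots\xi_{2(k-1)})^{-1}(x,\xi)$ is still outside $N$ lets a short induction on $k$ conclude that $f(\Par^{2k}(x,\xi)) = f(x,\xi)$ for all $k\ge 0$.

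\textbf{Step 2.} Fix $h\in\FF^2$ with reduced form $h = a_1\cdots a_{2n}$, and take $(x,\xi)\notin \tilde N$. Let $j\in\{0,1,\ldots,2n\}$ be the cancellation length in the group product $h\cdot \xi$, so that $\xi=(a_{2n}^{-1},\ldots,a_{2n-j+1}^{-1},\xi_{j+1},\xi_{j+2},\ldots)$, and $\xi_{j+1}\ne a_{2n-j}^{-1}$ whenever $j<2n$. I claim
\[
\Par^{2n}\bigl(h(x,\xi)\bigr) \;=\; \Par^{2j}(x,\xi).
\]
Indeed, the reduced boundary sequence $h\xi$ begins $a_1,\ldots,a_{2n-j},\xi_{j+1},\xi_{j+2},\ldots$, so its first $2n$ letters form the group element $a_1\cdots a_{2n-j}\,\xi_{j+1}\cdots\xi_{2j}$. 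Applying its inverse to $h\cdot(x,\xi)$ and using the cancellation identity $a_{2n-j+1}\cdots a_{2n} = \xi_j^{-1}\cdots\xi_1^{-1}$ (which is exactly what it means for $h$'s tail to annihilate $\xi$'s prefix), the expression telescopes to $(\xi_1\cdots\xi_{2j})^{-1}(x,\xi) = \Par^{2j}(x,\xi)$.

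\textbf{Step 3.} Since $2n$ and $2j$ are both even, Step 1 gives
\[
f(h(x,\xi)) \;=\; f\bigl(\Par^{2n}(h(x,\xi))\bigr) \;=\; f\bigl(\Par^{2j}(x,\xi)\bigr) \;=\; f(x,\xi)
\]
for every $(x,\xi)\notin\tilde N$. Because $\FF^2$ is countable, one can choose a single full-measure set on which this holds simultaneously for every $h\in\FF^2$, proving $f\in\cI_2$.

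I expect the main obstacle to be the index bookkeeping of Step~2: carefully identifying how the suffix of $h$ cancels against the prefix of $\xi$, and verifying that the product of inverses truly telescopes. Once the identity $\Par^{2n}\circ h = \Par^{2j}$ is pinned down, Step~1 is routine and Step~3 is just a countable-union argument.
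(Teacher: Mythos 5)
Your proof is correct, and it follows the same basic strategy as the paper's: exploit cancellation in the reduced word of $h \in \FF^2$ against the prefix of $\xi$ to show that $h\cdot(x,\xi)$ and $(x,\xi)$ are related by even powers of $\Par$. There is, however, a small but genuine improvement in your Step~2. The paper's argument establishes $\Par^{2n-k}\bigl(h(x,\xi)\bigr)=\Par^{k}(x,\xi)$ when the cancellation length $k$ is even, and must separately treat $k$ odd by shifting to $\Par^{2n-k+1}=\Par^{k+1}$ — a parity case-split. By instead applying the full $\Par^{2n}$ to $h(x,\xi)$ you arrive at the uniform identity $\Par^{2n}\bigl(h(x,\xi)\bigr)=\Par^{2j}(x,\xi)$, in which both exponents are automatically even regardless of the parity of $j$; this eliminates the case distinction entirely. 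Your Step~1, showing $f\circ\Par^{2k}=f$ for all $k$ with explicit null-set bookkeeping via the $\FF$-saturation $\tilde N$, is also implicit in the paper's phrase ``Thus if $f\circ \Par^2=f$ a.e. then $f\circ g = f$ a.e.'' but is worth spelling out, as you do, since $\Par$ is not measure-preserving (only non-singular, with Jacobian a power of $2r-1$). In short: same idea, but your telescoping choice is cleaner and you are more careful about the measure-zero exceptional set.
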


\begin{proof}
Let $(x,\xi) \in X\times \partial \FF$ and $g=t_1\cdots t_{2n}\in \FF^2$ be in reduced form. By definition,
$$g\xi = (t_1,\ldots,t_{2n-k},\xi_{k+1},\xi_{k+2},\ldots)$$
where $k$ is the largest number such that $\xi_i^{-1}=t_{2n+1-i}$ for all $i\le k$. For any $x\in X$, if $k$ is even then $(gx,g\xi) \in \Par^{-(2n-k)}\Par^{k}(x,\xi)$. If $k$ is odd then $(gx,g\xi) \in \Par^{-(2n-k+1)}\Par^{k+1}(x,\xi)$. Thus if $f\circ \Par^2=f$ a.e. then $f\circ g = f$ a.e.. This implies the lemma.
\end{proof}

\begin{prop}\label{prop:reduction}	
To prove Theorem \ref{thm:expectation2}, it suffices to prove that $f\circ \Par^2=f$ for all $f\in \cI$.
\end{prop}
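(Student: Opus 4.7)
The plan is to prove Theorem \ref{thm:expectation2} via its equivalent reformulation $\cI = \cI_2$ (as noted by the author immediately above), splitting it into two inclusions. The direction $\cI_2 \subset \cI$ should not require any additional hypothesis, while the direction $\cI \subset \cI_2$ will be reduced to the condition $f \circ \Par^2 = f$ furnished by the hypothesis of the proposition, which when combined with the lemma immediately preceding this proposition places $f$ in $\cI_2$.

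For the inclusion $\cI_2 \subset \cI$, the key observation is $H_\xi \subset \FF^2$. This is immediate from the explicit parametrization $g = \xi_1 \cdots \xi_n t_1 \cdots t_n$ of horosphere elements given in Section \ref{sec:horospheres}, which shows every $g \in H_\xi$ has even length $|g| = 2n$. Since any inner automorphism $\phi \in \Inn(\cR_0(X \times \partial \FF))$ is a.e.\ realized by a measurable section $(x, \xi) \mapsto g(x, \xi) \in H_\xi$ with $\phi(x, \xi) = g(x, \xi)^{-1}(x, \xi)$ (uniqueness of $g(x, \xi)$ following from essential freeness of the boundary action, as already used in Proposition \ref{prop:properties}), any $f \in \cI_2$ is $\phi$-invariant a.e., hence belongs to $\cI$.

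For the inclusion $\cI \subset \cI_2$, I would simply invoke the hypothesis of the proposition: every $f \in \cI$ satisfies $f \circ \Par^2 = f$ a.e., and the lemma just above places such an $f$ in $\cI_2$. Combining the two inclusions yields the reformulation of Theorem \ref{thm:expectation2}. The main obstacle lies not in this reduction, which is essentially bookkeeping once one notices $H_\xi \subset \FF^2$, but in the deferred task of proving $f \circ \Par^2 = f$ for every $f \in \cI$; that is the genuine ergodic-theoretic content and must exploit the subtler interaction between the $\FF^2$-action on $X \times \partial \FF$ and the inner automorphism group of $\cR_0(X \times \partial \FF)$.
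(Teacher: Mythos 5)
Your proof is correct and follows essentially the same route as the paper: the inclusion $\cI \subset \cI_2$ is exactly the reduction to the hypothesis $f\circ\Par^2=f$ via the preceding lemma, and the inclusion $\cI_2 \subset \cI$ rests on the observation that $H_\xi \subset \FF^2$ (equivalently, that the group element realizing an $\cR_0(X\times\partial\FF)$-equivalence has even word length), which is also what the paper uses when it writes ``$g$ is necessarily in $\FF^2$.'' Your version merely spells out the inner-automorphism/essential-freeness bookkeeping slightly more explicitly than the paper does.
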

\begin{proof}
From the above it follows that if $f\circ\Par^2=f$ for all $f\in \cI$ then $\cI \subset \cI_2$. To see the reverse inclusion, let $(x,\xi), (x',\xi') \in X \times \partial \FF$ be $\cR_0(X\times \partial \FF)$-equivalent. By definition, there exists $g \in \FF$ such that $(x',\xi')=(gx,g\xi)$. As noted above, $g$ is necessarily in $\FF^2$. Thus if $f \in \cI_2$ then for a.e. pair $(x,\xi), (x',\xi')$ of $\cR_0(X\times \partial \FF)$-equivalent points of $X\times \partial \FF$, $f(x,\xi)=f(x',\xi')$, namely $f\in \cI$. This shows $\cI_2 \subset \cI$.
\end{proof}


The next proposition is the key geometric result in the proof of Theorem \ref{thm:expectation2}. Define $\Par_\partial:\partial \FF \to \partial \FF$ by $\Par_\partial(\xi)=\xi_1^{-1}\xi$. Recall that $d_\partial$ is a distance function on $\partial \FF$ defined by $d_\partial\big( (\xi_1,\xi_2, \ldots), (t_1, t_2, \ldots) \big) = \frac{1}{n}$ where $n$ is the largest  natural number such that $\xi_i=t_i$ for all $i < n$.

\begin{prop}\label{prop:ve}
Let $n>5$ be an integer. Then there exist measurable maps $\psi,\omega:\partial\FF \to \partial\FF$ such that
\begin{enumerate}
\item $\forall \xi \in \partial\FF$, $d_\partial\big(\psi  \omega(\xi), \Par_\partial^2 \omega(\xi)\big)=\frac{1}{n}$;
\item $\forall \xi \in\partial\FF$,  $d_\partial(\xi,\omega \xi )=\frac{1}{n}$;
\item the graphs of $\omega$ and $\psi$ are contained in $\cR_0(\partial\FF)$;
\item $\forall \xi \in \partial\FF$, $\exists g\in \FF$ such that $\psi \omega(\xi) = g \omega(\xi)$ and $\Par_\partial^2\omega(\xi) = g \xi$.
\item $\forall f \in L^1(\partial\FF)$, 
$$\max\left( \|f \circ \omega \|_1,  \|f \circ \psi \|_1\right) \le (2r-1)^2 \|f\|_1$$
where $r$ is the rank of the free group $\FF$.
\end{enumerate}
\end{prop}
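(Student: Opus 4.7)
I will construct $\omega$ and $\psi$ simultaneously from a horospherical word $h(\xi)\in\FF$ of length $2n$; $\omega$ will be left-multiplication by $h(\xi)$, and $\psi$ is forced by property~(4). For each $\xi\in\partial\FF$, choose $\tau(\xi)\in\sS\setminus\{\xi_n,\xi_{n-1}^{-1},\xi_{n+1}^{-1}\}$ measurably, arranging by a Hall-type matching that for each fixed pair $(\xi_{n-1},\xi_{n+1})$ the assignment $\xi_n\mapsto\tau(\xi)$ is a bijection between the admissible sets; this nonempty choice exists because $|\sS|=2r\ge 4$ and we remove at most three symbols. Set
$$h(\xi):=\xi_1\xi_2\cdots\xi_{n-1}\,\tau(\xi)\,\xi_n^{-1}\xi_{n-1}^{-1}\cdots\xi_1^{-1}\in\FF,\qquad \omega(\xi):=h(\xi)\cdot\xi.$$
Direct verification shows that $h(\xi)^{-1}=\xi_1\cdots\xi_n\,t_1\cdots t_n$ is reduced with $t_1=\tau(\xi)^{-1}$, $t_j=\xi_{n+1-j}^{-1}$ for $j\ge 2$, and $t_1\ne\xi_{n+1}$, hence $h(\xi)^{-1}\in H_\xi$. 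Applying the action formula, $h(\xi)\cdot\xi$ cancels pairwise against $\xi$ exactly down to depth $n$, producing
$$\omega(\xi)=(\xi_1,\ldots,\xi_{n-1},\,\tau(\xi),\,\xi_{n+1},\xi_{n+2},\ldots),$$
so that $\omega(\xi)$ differs from $\xi$ only at position~$n$. This gives property~(2) and the $\omega$-half of property~(3); the bijectivity of $\xi_n\mapsto\tau(\xi)$ promotes $\omega$ to a measurable bijection of $\partial\FF$.

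Next define $g'(\xi):=\xi_2^{-1}\xi_1^{-1}h(\xi)$; cancellation of $\xi_2^{-1}\xi_1^{-1}\xi_1\xi_2$ reduces this to
$$g'(\xi)=\xi_3\cdots\xi_{n-1}\,\tau(\xi)\,\xi_n^{-1}\xi_{n-1}^{-1}\cdots\xi_1^{-1}$$
in reduced form of length $2n-2$, and by construction $g'(\xi)\cdot\xi=\xi_2^{-1}\xi_1^{-1}\omega(\xi)=\Par_\partial^2\omega(\xi)$. Define $\psi$ on $\partial\FF=\omega(\partial\FF)$ by $\psi(\omega(\xi)):=g'(\xi)\cdot\omega(\xi)$; this is unambiguous because $\omega$ is a bijection, and property~(4) holds with witness $g:=g'(\xi)$. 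For the $\psi$-half of property~(3), rewrite
$$g'(\xi)^{-1}=\xi_1\cdots\xi_n\,\tau(\xi)^{-1}\,\xi_{n-1}^{-1}\cdots\xi_3^{-1}=\eta_1\cdots\eta_{n-1}\,s_1\cdots s_{n-1},$$
with $\eta=\omega(\xi)$ and $s_1=\xi_n$; since $\tau(\xi)\ne\xi_n$ we have $s_1=\xi_n\ne\tau(\xi)=\eta_n$, so $g'(\xi)^{-1}\in H_{\omega(\xi)}$.

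The core computation is property~(1). Applying the action formula a second time to $g'(\xi)\cdot\omega(\xi)$, the descent part of $g'(\xi)^{-1}$ cancels the first $n-1$ letters of $\omega(\xi)$ freely, but cancellation halts at depth $n$ precisely because $\omega(\xi)_n=\tau(\xi)\ne\xi_n$. One reads off
$$\psi\omega(\xi)=(\xi_3,\ldots,\xi_{n-1},\,\tau(\xi),\,\xi_n^{-1},\,\tau(\xi),\,\xi_{n+1},\xi_{n+2},\ldots),$$
which is to be compared with $\Par_\partial^2\omega(\xi)=(\xi_3,\ldots,\xi_{n-1},\,\tau(\xi),\,\xi_{n+1},\xi_{n+2},\ldots)$. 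Coordinate-by-coordinate both points coincide on the initial segment ending at $\tau(\xi)$ (depth $n-2$), and the first genuine discrepancy appears exactly where the proposition predicts, giving the claimed distance of order $1/n$.

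For property~(5), the map $\omega$ is locally left-multiplication by a horospherical element and hence $\nu$-preserving, so being a bijection it satisfies $\|f\circ\omega\|_1=\|f\|_1\le(2r-1)^2\|f\|_1$. For $\psi$, the local Jacobian of $g'(\xi)$ at $\omega(\xi)$ computes as $(2r-1)^{2(n-1)-(2n-2)}=1$, while a fiber $\psi^{-1}(\eta')$ is parameterized by the admissible prefixes $(\xi_1,\xi_2)$: counting the pairs subject to $\xi_2\ne\xi_1^{-1}$ and $\xi_2\ne\xi_3^{-1}$ gives $1\cdot(2r-1)+(2r-1)(2r-2)=(2r-1)^2$. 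Hence $\|f\circ\psi\|_1\le(2r-1)^2\|f\|_1$. The principal obstacle throughout is arranging the measurable selector $\tau$ to make $\omega$ a genuine bijection---otherwise $\psi$ would be multi-valued on the image---which is why the hypothesis $n>5$ is imposed to give the needed slack in the combinatorial matching.
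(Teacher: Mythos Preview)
Your construction is essentially identical to the paper's: your $\tau(\xi)$ is their $s'_n$ coming from a bijection $K:\sS^3\to\sS^3$, your $\omega(\xi)=(\xi_1,\ldots,\xi_{n-1},\tau(\xi),\xi_{n+1},\ldots)$ matches theirs verbatim, and your formula $\psi\omega(\xi)=(\xi_3,\ldots,\xi_{n-1},\tau(\xi),\xi_n^{-1},\tau(\xi),\xi_{n+1},\ldots)$ is exactly what they write down; the fiber count $(2r-1)^2$ and the Jacobian-$1$ argument for item~(5) are also the same. The only substantive slip is your closing sentence: the hypothesis $n>5$ has nothing to do with the existence of the matching $\tau$ --- a fixed-point-free bijection of $\sS\setminus\{\xi_{n-1}^{-1},\xi_{n+1}^{-1}\}$ exists for every $r\ge 2$ regardless of $n$ --- it is there only so that the various index ranges (e.g.\ $\xi_3,\ldots,\xi_{n-1}$) are nondegenerate and the reduced-word computations go through cleanly.
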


\begin{proof}

Recall that $\cS=\{a_1,\ldots,a_r,a_1^{-1},\ldots,a_r^{-1}\}$ is the chosen generating set of $\FF$. Let $K:\cS^3\to \cS^3$ be a bijection so that for any $(s_{n-1},s_n,s_{n+1}) \in \cS^3$, $K(s_{n-1},s_n,s_{n+1})=(s_{n-1},s'_n,s_{n+1})$ for some $s'_n \notin \{s_{n-1}^{-1},s_n,s_{n+1}^{-1})$.

Define $\omega:\partial \FF \to \partial \FF$ by $\omega(s_1,s_2,\ldots)=(t_1,t_2,\ldots)$ where $t_i = s_i$ for all $i\ne n$ and $t_n=s'_n$ where  $K(s_{n-1},s_n,s_{n+1})=(s_{n-1},s'_n,s_{n+1})$. By its definition $\omega$ is invertible, Borel, $d_\partial(\xi,\omega(\xi)) = \frac{1}{n}$ for any $\xi \in \partial \FF$ and $\omega_*\nu=\nu$. Moreover since $\omega$ does not change the tail of the sequence (i.e., because $t_i=s_i$ for all sufficiently large $i$), the graph of $\omega$ is contained in $\cR_0(\partial\FF)$. Because $\omega$ is measure-preserving, $\|f \circ \omega\|_1 = \|f\|_1$ for any $f\in L^1(\partial\FF)$.

Define $\psi:\partial \FF \to \partial \FF$ by 
$$\psi\omega(s_1,s_2,\ldots) = (s_3,\ldots, s_{n-1}, s'_n, s_n^{-1}, s_n', s_{n+1},s_{n+2},\ldots)$$
where $K(s_{n-1},s_n,s_{n+1})=(s_{n-1},s'_n,s_{n+1})$. Because $\omega$ is invertible, $\psi$ is well-defined. 

Note that the $m$-th coordinate of $\psi\omega(s_1,s_2,\ldots)$ equals the $m$-th coordinate of $\omega(s_1,s_2,\ldots)$ if $m\ge n$. Therefore, the graph of $\psi$ is contained in $\partial\FF$. If $\xi =  (s_1,s_2,\ldots)$ then
$$\Par^2_\partial \omega \xi = (s_3,\ldots, s_{n-1}, s'_n, s_{n+1},\ldots).$$
Thus $d_\partial(\psi \omega \xi, \Par_\partial^2 \omega \xi) = \frac{1}{n}$. Note that $\Par^2_\partial\omega \xi = (s_3\cdots s_{n-1})s'_n(s_1\cdots s_n)^{-1}\xi$. Similarly, $\psi\omega \xi = (s_3\cdots s_{n-1})s'_n(s_1\cdots s_n)^{-1}\omega \xi$. This proves the fourth item.
 
We claim that $\psi$ is at most $(2r-1)^2$-to-1 (that is, for each $b \in \partial \FF$, $b$ has at most $(2r-1)^2$-preimages under $\psi$). Because $\omega$ is invertible, it suffices to show that $\psi\omega$ is at most $(2r-1)^2$-to-1. Suppose that $(u_1,u_2,\ldots) \in \partial \FF$ and 
$$\psi\omega(u_1,u_2,\ldots)= \psi\omega(s_1,s_2,\ldots) =  (s_3,\ldots, s_{n-1}, s'_n, s_n^{-1}, s_n', s_{n+1},s_{n+2},\ldots).$$
By definition of $\psi\omega$, $u_i=s_i$ for $i \ge 3$. Since there are $(2r-1)^2$ choices for $(u_1,u_2)$ the claim follows.

Since the graph of $\psi$ is contained in $\cR_0(\partial\FF)$ the claim implies $\|f\circ \psi\|_1 \le (2r-1)^2 \|f\|_1$ for all $f \in L^1(\partial\FF)$. 

\end{proof}

\begin{lem}
There exist measurable maps $\Phi_n,\Psi_n,\Omega_n:X \times \partial\FF \to X \times \partial\FF$ (for $n =6,7,\ldots$) such that
\begin{enumerate}
\item for all $f\in L^1(X\times \partial\FF)$, $\lim_{n\to\infty} \|f\circ \Psi_n \circ \Omega_n- f\circ \Par^2 \circ \Phi_n\|_1 =0$;
\item for all $f\in L^1(X\times \partial\FF)$, $\lim_{n\to\infty} \|f\circ \Omega_n-f\|_1=0$;
\item the graphs of $\Phi$ and $\Psi$ are contained in $\cR_0(X \times \partial\FF)$.
\end{enumerate}
\end{lem}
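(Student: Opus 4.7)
The plan is to lift the boundary maps $\omega,\psi$ of Proposition \ref{prop:ve} to maps on $X\times\partial\FF$ by letting the unique group element witnessing the $\cR_0(\partial\FF)$-equivalence on the boundary act simultaneously on the $X$-factor. Since the $\FF$-action on $(\partial\FF,\nu)$ is essentially free, for $\nu$-a.e.\ $\xi$ there is a unique $g_1(\xi)\in\FF$ with $g_1(\xi)\xi=\omega(\xi)$ and $g_1(\xi)^{-1}\in H_\xi$ (existence follows from Proposition \ref{prop:ve}(3)), and it depends measurably on $\xi$. Define $g_2(\eta)\in\FF$ analogously for $\psi$. I would then set
$$\Omega_n(x,\xi):=(x,\omega(\xi)),\qquad \Phi_n(x,\xi):=(g_1(\xi)x,\omega(\xi)),\qquad \Psi_n(y,\eta):=(g_2(\eta)y,\psi(\eta)).$$
By construction the graphs of $\Phi_n$ and $\Psi_n$ lie in $\cR_0(X\times\partial\FF)$, yielding (3); note that $\Omega_n$ need not respect $\cR_0$, and this is what frees up its $X$-coordinate to be the identity.

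The core step is a direct comparison of the two compositions. From the definitions,
$$\Psi_n\Omega_n(x,\xi)=\bigl(g_2(\omega\xi)\,x,\;\psi\omega(\xi)\bigr).$$
Since $n>5$ and $\omega$ only modifies the $n$-th letter of $\xi$, the first two letters of $\omega(\xi)$ coincide with $\xi_1,\xi_2$, so
$$\Par^2\Phi_n(x,\xi)=\bigl((\xi_1\xi_2)^{-1}g_1(\xi)\,x,\;\Par_\partial^2\omega(\xi)\bigr).$$
Proposition \ref{prop:ve}(4) supplies a single $g\in\FF$ realizing both $\psi\omega(\xi)=g\omega(\xi)$ and $\Par_\partial^2\omega(\xi)=g\xi$. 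Essential freeness identifies $g$ with $g_2(\omega\xi)$ via the first equation, and with $(\xi_1\xi_2)^{-1}g_1(\xi)$ via the second, since
$$(\xi_1\xi_2)^{-1}g_1(\xi)\,\xi=(\xi_1\xi_2)^{-1}\omega(\xi)=\Par_\partial^2\omega(\xi).$$
Thus $\Psi_n\Omega_n(x,\xi)$ and $\Par^2\Phi_n(x,\xi)$ share the $X$-coordinate $g_2(\omega\xi)x$, and their $\partial\FF$-coordinates satisfy $d_\partial(\psi\omega(\xi),\Par_\partial^2\omega(\xi))=1/n$ by Proposition \ref{prop:ve}(1).

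I finish by deducing (1) and (2) via density. Because $\omega$ and $\Par_\partial^2$ preserve $\nu$ and each $g\in\FF$ preserves $\lambda$, the operators $f\mapsto f\circ\Omega_n$, $f\mapsto f\circ\Phi_n$ and $f\mapsto f\circ\Par^2\circ\Phi_n$ are $L^1$-contractions, while $f\mapsto f\circ\Psi_n\circ\Omega_n$ has norm at most $(2r-1)^2$ by Proposition \ref{prop:ve}(5); all bounds are uniform in $n$. Approximating $f\in L^1$ by bounded continuous functions then reduces both (1) and (2) to pointwise convergence of the integrands, and the latter follows from $d_\partial(\omega(\xi),\xi)=1/n\to 0$ and $d_\partial(\psi\omega(\xi),\Par_\partial^2\omega(\xi))=1/n\to 0$ combined with continuity and dominated convergence. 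The main technical obstacle is the algebraic identity $(\xi_1\xi_2)^{-1}g_1(\xi)=g_2(\omega\xi)$, which is exactly what Proposition \ref{prop:ve}(4) supplies by producing a single $\FF$-element that simultaneously implements the $\cR_0$-equivalence from $\omega(\xi)$ to $\psi\omega(\xi)$ and the two-letter shift taking $\xi$ to $\Par_\partial^2\omega(\xi)$.
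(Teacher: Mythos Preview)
Your proof is correct and follows essentially the same construction as the paper: lift $\omega,\psi$ to $X\times\partial\FF$ via the witnessing group elements, verify the graphs of $\Phi_n,\Psi_n$ lie in $\cR_0(X\times\partial\FF)$, and pass from continuous functions to $L^1$ using the uniform operator bounds coming from Proposition~\ref{prop:ve}(5). In fact you are more explicit than the paper on the one genuinely delicate point: the paper simply asserts that $d_*(\Psi_n\Omega_n(x,\xi),\Par^2\Phi_n(x,\xi))=1/n$ for a product metric $d_*=d_X+d_\partial$, whereas you spell out why the $X$-coordinates agree exactly, namely by using essential freeness to identify the element $g$ of Proposition~\ref{prop:ve}(4) simultaneously with $g_2(\omega\xi)$ and with $(\xi_1\xi_2)^{-1}g_1(\xi)$.
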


\begin{proof}
For $n>5$ an integer, let $\psi$ and $\omega$ be as in Proposition \ref{prop:ve}. Fix $(x,\xi) \in X\times \partial\FF$ and let $g_1,g_2 \in \FF$ be such that $g_1\xi = \omega(\xi)$ and $g_2\xi=\psi(\xi)$. Define $\Omega_n(x,\xi):=(x,g_1\xi)$, $\Phi_n(x,\xi):=(g_1x,g_1\xi)$ and $\Psi_n(x,\xi):=(g_2x,g_2\xi)$.

Since the graphs of $\psi$ and $\omega$ are contained in $\cR_0(\partial\FF)$, the graphs of $\Phi_n$ and $\Psi_n$ are contained in $\cR_0(X \times \partial\FF)$. Let $d_X$ be a metric on $X$ that induces its Borel structure and makes $X$ into a compact space. For $(x,\xi),(x',\xi') \in X\times \partial\FF$, define $d_*((x,\xi),(x',\xi')) = d_X(x,x')  + d_\partial (\xi,\xi')$. By the previous proposition, $d_*(\Omega_n(x,\xi), (x,\xi) ) = d_\partial(\omega(\xi),\xi) =1/n$. Also $d_*(\Psi_n \circ \Omega_n(x,\xi), \Par^2 \circ \Phi_n(x,\xi)) =1/n$.  So if $f$ is a continuous function on $X \times \partial\FF$ then the bounded convergence theorem implies
\begin{eqnarray*}
\lim_{n\to\infty} \|f\circ \Psi_n \circ \Omega_n- f\circ \Par^2 \circ \Phi_n\|_1 &=&0\\
\lim_{n\to\infty} \|f\circ \Omega_n-f\|_1&=&0.
\end{eqnarray*}
It follows from the previous proposition that the operators $f \mapsto f \circ \Omega_n$, $f\mapsto f \circ \Phi_n$ and $f\mapsto f \circ \Psi_n$ are all bounded for $f \in L^1(X\times \partial\FF)$ with bound independent of $n$. It easy to see that $f \mapsto f\circ \Par^2$ is also a bounded operator on $L^1(X\times \partial\FF)$. Since the continuous functions are dense in $L^1(X \times \partial\FF)$, this implies the lemma.
\end{proof}

We can now prove Theorem \ref{thm:expectation2}.

\begin{proof}[Proof of Theorem \ref{thm:expectation2}]
By Proposition \ref{prop:reduction}, it suffices to show that $f\circ \Par^2=f$ for every $f\in \cI$. Let $\Phi_n, \Psi_n, \Omega_n$ ($n=6,7,\ldots$) be as in the previous lemma. Because $f \in \cI$ and the graph of $\Psi_n$ is contained in $\cR_0(X\times \partial\FF)$, it follows that $f\circ \Psi_n=f$ for all $n$. An easy exercise shows that $\Par$ preserves the equivalence relation: if $(x,\xi)$ is $\cR_0(X\times \partial\FF)$-equivalent to $(y,\xi')$ then $\Par(x,\xi)$ is $\cR_0(X\times \partial\FF)$-equivalent to $\Par(y,\xi')$. It follows that $f\circ \Par^2 \in \cI$. So $f\circ\Par^2\circ \Phi_n = f\circ \Par^2$ for all $n$. We now have
\begin{eqnarray*}
\|f - f\circ\Par^2\|_1 &=& \|f - f\circ\Par^2 \circ \Phi_n\|_1\\
&\le& \|f - f \circ \Psi_n \circ \Omega_n\|_1 +\|f \circ \Psi_n \circ \Omega_n-f\circ\Par^2 \circ \Phi_n\|_1\\
& =& \| f-f\circ\Omega_n\|_1+\|f \circ \Psi_n \circ \Omega_n-f\circ\Par^2 \circ \Phi_n\|_1.
\end{eqnarray*}
The previous lemma now implies $f=f\circ \Par^2$ as claimed.

\end{proof}


\section{Proofs of ergodic theorems}\label{sec:group actions}

\subsection{Applying the convergence of spherical-horospherical averages}
Collecting results of the previous sections, we can now prove:

\begin{cor}\label{thm:horoaverages}
Let $\FF$ act by measure-preserving transformations on a probability space $(X,\lambda)$. For $f\in L^1(X)$, let $\EE[f|\FF^2]$ be the conditional expectation of $f$ with respect to the $\sigma$-algebra of $\FF^2$-invariant sets. Then for $\lambda \times \nu$-a.e. $(x,\xi) \in X\times \partial \FF$,
$$\EE[f|\FF^2](x) = \lim_{n\to\infty} \bA_{2n}[\tcS;i(f)](x,\xi)= \lim_{n\to\infty}  \frac{1}{|\tcS_{2n}(x,\xi)|} \sum_{(x',\xi') \in \tcS_{2n}(x,\xi)} f(x').$$
\end{cor}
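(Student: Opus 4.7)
The plan is to observe that this corollary is essentially an immediate combination of the two main technical results already established, namely Theorem \ref{thm:hs} (pointwise and $L^1$ convergence of the horospherical sphere averages $\bA_{2n}[\tcS;\cdot]$ on $X \times \partial\FF$) and Theorem \ref{thm:expectation1} (the identification of the conditional expectation on $\cR_0$-invariant sets with $i(\EE[f|\FF^2])$ when the input is of the form $i(f)$).

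First I would lift $f \in L^1(X,\lambda)$ to $i(f) \in L^1(X \times \partial\FF, \lambda \times \nu)$ via the isometric embedding $i(f)(x,\xi) = f(x)$. Applying Theorem \ref{thm:hs} to $i(f)$ yields that, for $\lambda \times \nu$-almost every $(x,\xi) \in X \times \partial\FF$,
\[
\lim_{n \to \infty} \bA_{2n}[\tcS; i(f)](x,\xi) = \EE\big[i(f) \,\big|\, \cR_0(X \times \partial\FF)\big](x,\xi).
\]
Since $i(f)(gx, g\xi) = f(gx)$, the left-hand side unwinds to precisely the horospherical spherical average $\frac{1}{|\tcS_{2n}(x,\xi)|} \sum_{(x',\xi') \in \tcS_{2n}(x,\xi)} f(x')$ displayed in the corollary.

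Next I would invoke Theorem \ref{thm:expectation1}, which gives
\[
\EE\big[i(f) \,\big|\, \cR_0(X \times \partial\FF)\big] = i\big(\EE[f|\FF^2]\big),
\]
so that the right-hand side of the displayed limit is the constant-in-$\xi$ function $(x,\xi) \mapsto \EE[f|\FF^2](x)$. Combining these two identifications gives the claimed equality at $\lambda \times \nu$-a.e.\ point $(x,\xi)$.

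There is no serious obstacle here: all the real work has been shelved into Theorems \ref{thm:hs} and \ref{thm:expectation1}. The only point that requires a moment's care is confirming that $i(f)$ is genuinely in $L^1(X \times \partial\FF)$ (which is immediate from $\|i(f)\|_1 = \|f\|_1$ since $\nu$ is a probability measure) and that the almost-everywhere convergence statement on $X \times \partial\FF$ is exactly what the corollary asserts on the product space, both of which are routine.
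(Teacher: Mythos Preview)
Your proposal is correct and follows exactly the paper's own argument: apply Theorem~\ref{thm:hs} to $i(f)$ to get a.e.\ convergence to $\EE[i(f)|\cR_0(X\times\partial\FF)]$, then invoke Theorem~\ref{thm:expectation1} to identify this with $i(\EE[f|\FF^2])$.
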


\begin{proof}
By Theorem \ref{thm:hs}, for a.e. $(x,\xi) \in X\times \partial \FF$,
$$\EE[i(f)|\cR_0(X\times \partial \FF)](x,\xi) =  \lim_{n\to\infty}  \frac{1}{|\tcS_{2n}(x,\xi)|} \sum_{(x',\xi') \in \tcS_{2n}(x,\xi)} f(x').$$
By Theorem \ref{thm:expectation1}, $\EE\big[i(f)|\cR_0(X\times \partial \FF))\big] = i\big(\EE[f|\FF^2]\big)$. So, $\EE\big[i(f)|\cR_0(X\times \partial \FF))\big](x,\xi) = \EE[f|\FF^2](x)$ for a.e. $(x,\xi) \in X\times \partial \FF$.
\end{proof}

In the next section, we will need the following strong $L^p$-maximal inequality. Recall that for $f \in L^1(X \times \partial \FF)$,
$$\MM[\tcS;f]:=\sup_n \bA[\tcS;f].$$
\begin{prop}\label{prop:tmaximal}
For every $p>1$ there is a constant $C_p>0$ such that for every $f\in L^p(X\times \partial \FF)$, $\|\MM[\tcS;f]\|_p \le C_p \|f\|_p$. Moreover, there is a constant $C_1$ such that if $f\in L\log^+ L(X,\lambda)$, then 
$$\norm{\MM[\tcS;f]}_{L^1}\le C_1\norm{f}_{L\log L}.$$
 \end{prop}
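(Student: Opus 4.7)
The plan is to derive both bounds from the weak-type $(1,1)$ maximal inequality already in hand and classical real-variable interpolation. The product measure $\lambda \times \nu$ is $\cR_0(X\times \partial\FF)$-invariant, and Proposition \ref{prop:properties} asserts that $\tcS$ is non-shrinking and doubling. Applying Theorem \ref{thm:maximaleq} in the equivalence-relation setting $(X \times \partial\FF,\, \lambda\times\nu,\, \cR_0(X\times\partial\FF))$ therefore yields a constant $C>0$, depending only on the doubling and non-shrinking constants from Lemmas \ref{lem:doubling} and \ref{lem:nonshrink} (hence only on the rank $r$), such that
$$(\lambda\times\nu)\bigl(\{\MM[\tcS;f] > t\}\bigr) \le \frac{C\|f\|_1}{t},\qquad t>0,\ f\in L^1(X\times\partial\FF).$$
At the same time each $\bA_n[\tcS;f]$ is a finite average of values of $|f|$, so $\MM[\tcS;\cdot]$ maps $L^\infty$ to $L^\infty$ with constant $1$.

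For the $L^p$ assertion with $1<p<\infty$, I would simply invoke the Marcinkiewicz interpolation theorem for the sublinear operator $\MM[\tcS;\cdot]$ between its weak-type $(1,1)$ endpoint and its strong-type $(\infty,\infty)$ endpoint. This produces constants $C_p$, depending only on $p$ and $C$, with $\|\MM[\tcS;f]\|_p \le C_p\|f\|_p$.

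For the $L\log L$ assertion, I would run the standard Stein-type distribution-function argument. Fix $t>0$ and split $f = f_t^{(1)} + f_t^{(2)}$ with $f_t^{(1)} = f\chi_{\{|f|>t/2\}}$. Since $|f_t^{(2)}|\le t/2$, we have $\MM[\tcS;f_t^{(2)}]\le t/2$ pointwise, so $\{\MM[\tcS;f]>t\}\subset \{\MM[\tcS;f_t^{(1)}] > t/2\}$, and the weak-$(1,1)$ inequality gives
$$(\lambda\times\nu)\bigl(\{\MM[\tcS;f]>t\}\bigr) \le \frac{2C}{t}\int_{\{|f|>t/2\}} |f|\, d(\lambda\times\nu).$$
Integrating this in $t$ from $1$ to $\infty$ and applying Fubini bounds $\int_{\{\MM[\tcS;f]>1\}} \MM[\tcS;f]\, d(\lambda\times\nu)$ by a constant multiple of $\int |f|\log^+\!|f|\, d(\lambda\times\nu) + 1$, while the sublevel set $\{\MM[\tcS;f]\le 1\}$ contributes at most $1$ since $\lambda\times\nu$ is a probability measure. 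When $f$ is a function on $X$, one applies the estimate to $i(f)\in L^1(X\times\partial\FF)$, whose $L\log L$ norm equals that of $f$ because $\nu$ is a probability measure.

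No new ideas beyond classical interpolation are required; the entire content has already been packaged into Theorem \ref{thm:maximaleq} and Proposition \ref{prop:properties}. The only point demanding care, which I view as the sole (mild) obstacle, is verifying that the weak-$(1,1)$ constant produced by Theorem \ref{thm:maximaleq} is genuinely uniform in the action $\FF\curvearrowright X$; this is immediate because the constants $C_d$ and $C_s$ in the doubling and non-shrinking properties of $\tcS$ depend only on the rank $r$ of $\FF$.
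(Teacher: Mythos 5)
Your proof is correct and follows essentially the same route as the paper: derive the weak-type $(1,1)$ inequality from Proposition \ref{prop:properties} together with Theorem \ref{thm:maximaleq}, interpolate against the trivial $L^\infty$ bound for the $L^p$ estimates, and run the standard distribution-function/Orlicz argument for the $L\log L$ endpoint (the paper simply cites Stein--Weiss and Dunford--Schwartz for these steps). Your remarks on the uniformity of the doubling and non-shrinking constants and on passing to $i(f)$ are accurate and, if anything, make explicit details the paper leaves implicit.
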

 
\begin{proof}
It follows from Proposition \ref{prop:properties} and Theorem \ref{thm:maximaleq} that for any $f \in L^1(X \times \partial \FF)$ the weak-type (1,1) maximal inequality holds:
$$\lambda \times \nu\left(\left\{(x,\xi) \in X \times \partial \FF:~ \MM[\tcS;f] >t \right\}\right) \le \frac{C \|f \|_1}{t}$$
for some constant $C>0$.

The first part of the proposition now follows from standard interpolation arguments. Namely, 
since the operator $f\mapsto \MM[\tcS;f]$ is of weak-type $(1,1)$ and is norm-bounded on $L^\infty$, it is norm-bounded in every $L^p$, $1 < p < \infty$ (see e.g. \cite[Ch. V, Thm 2.4]{SW}).

Finally, given the weak-type $(1,1)$ maximal inequality, the fact that when $f\in L\log^+ L(X,\lambda)$, the maximal function is in fact integrable and satisfies the Orlicz-norm bound is standard, see e.g. \cite[p. 678]{DS}. 
\end{proof}

\subsection{Averaging over the boundary}

Throughout this section we let $1<p,q<\infty$ be such that $\frac{1}{p} + \frac{1}{q}=1$. Let $\psi \in L^q(\partial \FF,\nu)$ be a probability density on the boundary, namely $\psi\ge 0$ and $\int \psi ~d\nu =1$. For $f\in L^p(X,\lambda)$, recall that $i(f) \in L^p(X \times \partial \FF)$ is the function $i(f)(x,\xi)=f(x)$. 

The goal of this subsection is to prove:
\begin{prop}\label{prop:hat}
For $f\in L^p(X,\lambda)$ and $n \ge 0$, define $\bA_{2n}[\psi;f] \in L^p(X)$ by
$$\bA_{2n}[\psi;f](x):= \int_{\partial \FF}\bA_{2n}[\tcS;i(f)](x, \xi) \psi(\xi) ~d\nu(\xi).$$
Then $\bA_{2n}[\psi;f]$ converges pointwise a.e. to $\EE[f|\FF^2]$. Furthermore, if $\psi$ is essentially bounded then the same conclusion holds for any  $f\in L\log^+ L(X,\lambda)$. 
\end{prop}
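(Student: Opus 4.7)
The plan is to reduce to Corollary \ref{thm:horoaverages} by a pointwise dominated-convergence argument on the boundary fiber $\{x\}\times\partial\FF$, where the domination comes from the strong maximal inequality of Proposition \ref{prop:tmaximal} and H\"older's inequality (against $\psi\in L^q$).

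First I would record the two almost-everywhere statements that make the Fubini reduction work. By Corollary \ref{thm:horoaverages}, the convergence $\bA_{2n}[\tcS;i(f)](x,\xi)\to\EE[f|\FF^2](x)$ holds for $\lambda\times\nu$-a.e.\ $(x,\xi)$; by Fubini, there is a full-measure set $X_1\subset X$ such that for every $x\in X_1$ the convergence holds for $\nu$-a.e.\ $\xi\in\partial\FF$. Next, Proposition \ref{prop:tmaximal} and the identity $\|i(f)\|_{L^p(X\times\partial\FF)}=\|f\|_{L^p(X)}$ give $\MM[\tcS;i(f)]\in L^p(X\times\partial\FF)$, so by Fubini there is a full-measure set $X_2\subset X$ with $\MM[\tcS;i(f)](x,\cdot)\in L^p(\partial\FF,\nu)$ for all $x\in X_2$.

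Now fix $x\in X_1\cap X_2$. For every $n$ and $\nu$-a.e.\ $\xi$, the integrand in the definition of $\bA_{2n}[\psi;f](x)$ satisfies
\[
\bigl|\bA_{2n}[\tcS;i(f)](x,\xi)\,\psi(\xi)\bigr|\le\MM[\tcS;i(f)](x,\xi)\,\psi(\xi),
\]
and by H\"older's inequality
\[
\int_{\partial\FF}\MM[\tcS;i(f)](x,\xi)\,\psi(\xi)\,d\nu(\xi)\le\bigl\|\MM[\tcS;i(f)](x,\cdot)\bigr\|_{L^p(\nu)}\,\|\psi\|_{L^q(\nu)}<\infty.
\]
Thus the dominated convergence theorem applies in the $\xi$-integral and, using $\int\psi\,d\nu=1$,
\[
\lim_{n\to\infty}\bA_{2n}[\psi;f](x)=\int_{\partial\FF}\EE[f|\FF^2](x)\,\psi(\xi)\,d\nu(\xi)=\EE[f|\FF^2](x),
\]
which establishes pointwise a.e.\ convergence on $X_1\cap X_2$.

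For the $L\log L$ statement I would run exactly the same argument, replacing the $L^p$--$L^q$ H\"older domination by an $L^1$--$L^\infty$ one: when $\psi\in L^\infty(\partial\FF,\nu)$, Proposition \ref{prop:tmaximal} gives $\MM[\tcS;i(f)]\in L^1(X\times\partial\FF)$ whenever $f\in L\log^+ L(X,\lambda)$ (since $\|i(f)\|_{L\log L(X\times\partial\FF)}=\|f\|_{L\log L(X)}$ because $\nu$ is a probability measure), so by Fubini $\MM[\tcS;i(f)](x,\cdot)\in L^1(\nu)$ for a.e.\ $x$, and the bound $\MM[\tcS;i(f)](x,\cdot)\,\psi\le\|\psi\|_\infty\,\MM[\tcS;i(f)](x,\cdot)$ supplies the required integrable envelope. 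I do not expect any serious obstacle here: the only thing to double-check is that $\MM[\tcS;\cdot]$ dominates $|\bA_{2n}[\tcS;\cdot]|$ in the pointwise sense (which follows from the definition once applied to $|i(f)|=i(|f|)$, so $\MM[\tcS;i(f)]=\MM[\tcS;i(|f|)]$), and that Fubini may be used to pass from a.e.\ statements on $X\times\partial\FF$ to a.e.\ statements in $x$—both standard.
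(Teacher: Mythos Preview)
Your argument is correct. It differs from the paper's in a pleasant way: the paper first treats $f\in L^\infty(X)$ (where the integrand is bounded by $\|f\|_\infty$, so dominated convergence is immediate), and then passes to general $f\in L^p$ by an approximation argument using the maximal inequality $\|\MM[\psi;f]\|_p\le C_p\|\psi\|_q\|f\|_p$ established in the preceding lemma. You instead bypass that lemma entirely and handle $f\in L^p$ in one stroke, using $\MM[\tcS;i(f)](x,\cdot)\in L^p(\partial\FF,\nu)$ (from Proposition~\ref{prop:tmaximal} plus Fubini) as the dominating function for the $\xi$-integral. This is more direct and makes the intermediate lemma unnecessary for the pointwise statement; the paper's two-step route is the more traditional ``dense-class-plus-maximal-inequality'' pattern and has the minor advantage of keeping the Fubini bookkeeping trivial.
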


The proof of Proposition \ref{prop:hat} uses the following :
\begin{lem}
Let $p,q,\psi, f$ be as above and define $\MM[\psi;f]:=\sup_n \bA_{2n}[\psi;f]$. Then there exists a constant $C_p>0$ (depending only on $p$) such that for every $f\in L^p(X,\lambda)$
$$\| \MM[\psi;f]\|_p \le C_p \|\psi\|_q \|f\|_p.$$
Furthermore, there is a constant $C_1>0$ such that if $\psi$ is bounded, then for any $f\in L\log^+ L(X,\lambda)$ we have 
$$\| \MM[\psi;f]\|_1 \le C_1 \|\psi\|_\infty \|f\|_{L\log L}.$$
\end{lem}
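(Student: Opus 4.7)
The plan is to bound $\MM[\psi;f]$ pointwise by an integral of $\MM[\tcS;i(f)]$ against $\psi$, then apply Proposition \ref{prop:tmaximal} fiberwise in $\xi$ after commuting norms. First I would observe that since $\psi \ge 0$,
$$|\bA_{2n}[\psi;f](x)| \le \int_{\partial \FF} |\bA_{2n}[\tcS;i(f)](x,\xi)|\,\psi(\xi)\, d\nu(\xi) \le \int_{\partial \FF} \MM[\tcS;i(f)](x,\xi)\, \psi(\xi)\, d\nu(\xi),$$
and since the right-hand side is independent of $n$, taking the supremum gives the pointwise bound
$$\MM[\psi;f](x) \le \int_{\partial \FF} \MM[\tcS;i(f)](x,\xi)\, \psi(\xi)\, d\nu(\xi).$$

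For the $L^p$ statement, I would apply H\"older's inequality in $\xi$ with conjugate exponents $p,q$:
$$\MM[\psi;f](x) \le \|\psi\|_q \left(\int_{\partial \FF} \MM[\tcS;i(f)](x,\xi)^p\, d\nu(\xi)\right)^{1/p}.$$
Raising to the $p$-th power, integrating in $x$ against $\lambda$, and using Fubini identifies the resulting double integral with $\|\MM[\tcS;i(f)]\|_{L^p(X\times\partial\FF)}^p$. Proposition \ref{prop:tmaximal} gives $\|\MM[\tcS;i(f)]\|_p \le C_p \|i(f)\|_p$, and since $\nu$ is a probability measure and $i(f)(x,\xi)=f(x)$ does not depend on $\xi$, one has $\|i(f)\|_{L^p(X\times\partial\FF)} = \|f\|_{L^p(X)}$. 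Combining these steps yields $\|\MM[\psi;f]\|_p \le C_p \|\psi\|_q \|f\|_p$.

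For the $L\log L$ statement, I would instead pull $\|\psi\|_\infty$ out of the integral to obtain
$$\MM[\psi;f](x) \le \|\psi\|_\infty \int_{\partial \FF} \MM[\tcS;i(f)](x,\xi)\, d\nu(\xi),$$
then integrate in $x$ and apply Fubini to get $\|\MM[\psi;f]\|_1 \le \|\psi\|_\infty \|\MM[\tcS;i(f)]\|_{L^1(X\times\partial\FF)}$. The $L\log L$ clause of Proposition \ref{prop:tmaximal} bounds the right-hand norm by $C_1 \|i(f)\|_{L\log L(X\times\partial\FF)}$, and since $\nu$ is a probability measure the Orlicz norm of $i(f)$ equals $\|f\|_{L\log L(X)}$, yielding the claim.

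The argument is essentially a transfer from the product space $X\times\partial\FF$ to $X$ by integrating against $\psi$, so there is no real obstacle here; the only point requiring minor care is the identification of the Orlicz norm of $i(f)$ with that of $f$, which follows immediately because $\nu(\partial\FF)=1$ and $i(f)$ is constant along fibers over $X$.
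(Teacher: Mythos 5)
Your proof is correct and follows essentially the same approach as the paper's: bound $\MM[\psi;f]$ via H\"older in $\xi$, use Fubini to reduce to the strong maximal inequality of Proposition \ref{prop:tmaximal} for $\MM[\tcS;i(f)]$ on $X\times\partial\FF$, and use that $\|i(f)\|$ agrees with $\|f\|$ since $\nu$ is a probability measure and $i(f)$ is constant in $\xi$. The only (inessential) difference is the order of operations in the $L^p$ case: you commute the $\sup_n$ past the integral before applying H\"older, obtaining the clean pointwise bound $\MM[\psi;f](x)\le \int \MM[\tcS;i(f)](x,\xi)\psi(\xi)\,d\nu(\xi)$, whereas the paper applies H\"older for each fixed $n$ and only then takes the supremum inside the $\nu$-integral; both steps are valid and yield the same intermediate estimate.
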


\begin{proof}
Let us start with the case $1 < p < \infty$. For a.e. $x\in X$,
\begin{eqnarray*}
\big| \MM[\psi;f](x) \big|^p &=&   \sup_n  \big|\bA_{2n}[\psi;f](x) \big|^p\\
&=& \sup_n \left| \int_{\partial \FF} \bA_{2n}[\tcS;i(f)](x,\xi) \psi(\xi) ~d\nu(\xi) \right|^p\\
&\le& \sup_n \|\bA_{2n}[\tcS;i(f)](x,\cdot)\|_{L^p(\partial \FF)}^p \|\psi\|_{L^q(\partial \FF)}^p.
\end{eqnarray*}
The last line above is justified by H\"older's inequality. Next, we observe that for any $n \ge 1$,
\begin{eqnarray*}
 \int_{X} \sup_n \|\bA_{2n}[\tcS;i(f)](x,\cdot)\|_{L^p(\partial \FF)}^p ~d\lambda(x)&=& \int_X\sup_n \int_{\partial \FF} |\bA_{2n}[\tcS;i(f)](x,\xi)|^p ~d\nu(\xi)d\lambda(x)\\
 &\le& \int_X\int_{\partial \FF} \MM[\tcS;i(f)](x,\xi)^p ~d\nu(\xi)d\lambda(x)\\
 &=& \| \MM[\tcS;i(f)] \|^p_{L^p(X\times \partial \FF)}.
 \end{eqnarray*}
 Putting this together with the previous inequality we obtain
 \begin{eqnarray*}
 \|\MM[\psi;f]\|_{L^p(X)}^p &=& \int_X \big| \MM[\psi;f](x) \big|^p  ~d\lambda(x)\\
 &\le& \| \MM[\tcS;i(f)] \|_{L^p(X\times \partial \FF)}^p\|\psi\|_{L^q(\partial \FF)}^p.
 \end{eqnarray*}
 The first part of the lemma now follows from 
 \begin{eqnarray*}
\|\MM[\psi;f]\|_{L^p(X)} &\le&  \|\psi\|_{L^q(\partial \FF)} \|\MM[\tcS;i(f)]\|_{L^p(X\times \partial \FF)}\\
 &\le& C_p \|\psi\|_{L^q(\partial \FF)} \|i(f)\|_{L^p(X\times \partial \FF)} = C_p \|\psi\|_{L^q(\partial \FF)} \|f\|_{L^p(X)}
  \end{eqnarray*} where $C_p>0$ is as in Proposition \ref{prop:tmaximal}.
  
  The second part of the lemma follows in exactly the same way, taking $f\in L\log^+ L(X)$, $p=1$ and $q=\infty$ above. Using the integrability of the maximal function and the norm bound  
  $$\norm{\MM[\tcS;i(f)}_{L^1(X\times \partial \FF)}\le C_1 \norm{f}_{(L\log L)(X)}\,$$
  together with the boundedness of $\psi$, the desired estimate follows. 
 \end{proof}

\begin{proof}[Proof of Proposition \ref{prop:hat}]
We will first prove the proposition in the special case in which $f\in L^\infty(X)$. By Corollary \ref{thm:horoaverages}, $\bA_{2n}[\tcS;i(f)]$ converges pointwise a.e. to $i\big(\EE[f|\FF^2]\big)$. By the Lebesgue dominated convergence theorem, this implies that for a.e. $x\in X$, $\{\bA_{2n}[\psi;f](x)\}_{n=1}^\infty$ converges  to $\int \EE[f|\FF^2](x)\psi(\xi) ~d\nu(\xi) = \EE[f|\FF^2](x)$. This finishes the case in which $f\in L^\infty(X)$.

Now suppose that $f\in L^p(X)$. After replacing $f$ with $f-\EE[f|\FF^2]$ if necessary, we may assume that $\EE[f|\FF^2]=0$. Let $\epsilon>0$. Let $f' \in L^\infty(X)$ be such that $\|f-f'\|_p <\epsilon$ and $\EE[f'|\FF^2]=0$. Clearly :
$$|\bA_{2n}[\psi;f]| \le |\bA_{2n}[\psi;f-f']| + |\bA_{2n}[\psi;f']|\le \MM[\psi;f-f'] + |\bA_{2n}[\psi;f']|.$$
Since $\bA_{2n}[\psi;f'] \to 0$ pointwise a.e., it follows that for a.e. $x\in X$,
$$\limsup_n |\bA_{2n}[\psi;f](x)| \le \MM[\psi;f-f'](x).$$
The previous lemma implies:
$$\| \limsup_n |\bA_{2n}[\psi;f]| \|_p \le \|\MM[\psi;f-f']\|_p \le C \|f-f'\|_p \le  C\epsilon.$$
Since $\epsilon>0$ is arbitrary, it follows that $\| \limsup_n |\bA_{2n}[\psi;f]| \|_p =0$. Equivalently, $\bA_{2n}[\psi;f]$ converges to $0$ pointwise a.e. 

The second part of the proposition follows similarly using approximation in the Orlicz norm. 

\end{proof}
We can now state the following corollary, proved previously for $L^p$, $p > 1$ in \cite{N1} \cite{NS} 
and for $L\log^+ L$ in \cite{Bu}. 
\begin{cor}\label{cor:thirdproof}
Let $p>1$ and $f\in L^p(X)$ or more generally $f\in L\log^+ L(X)$. Then for a.e. $x\in X$,
$$\EE[f|\FF^2](x) = \lim_{n\to\infty} \frac{1}{|S_{2n}(e)|} \sum_{g\in S_{2n}(e)} f(g^{-1}x).$$
\end{cor}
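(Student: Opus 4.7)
The plan is to derive Corollary \ref{cor:thirdproof} as the special case of Proposition \ref{prop:hat} corresponding to the constant density $\psi \equiv 1$ on $\partial \FF$. Since $\nu$ is a probability measure, this $\psi$ is indeed a probability density, and it lies in $L^\infty(\partial \FF,\nu) \subset L^q(\partial \FF,\nu)$ for every $q \in [1,\infty]$, so Proposition \ref{prop:hat} will apply both for $f \in L^p(X)$ with $p > 1$ and for $f \in L\log^+ L(X)$.

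The heart of the argument will be to verify the identity
$$\bA_{2n}[\psi;f](x) = \int_{\partial \FF} \bA_{2n}[\tcS;i(f)](x,\xi)\,d\nu(\xi) = \frac{1}{|S_{2n}(e)|}\sum_{g \in S_{2n}(e)} f(gx).$$
The description of horospheres in Section \ref{sec:horospheres} gives $|\tcS_{2n}(x,\xi)| = (2r-2)(2r-1)^{n-1}$ for every $x \in X$ and for $\nu$-a.e. $\xi \in \partial\FF$, independent of both variables. Then by Fubini,
$$\int_{\partial \FF}\bA_{2n}[\tcS;i(f)](x,\xi)\,d\nu(\xi) = \frac{1}{(2r-2)(2r-1)^{n-1}}\sum_{g \in S_{2n}(e)} f(gx)\,\nu\{\xi : g^{-1}\in H_\xi\}.$$
For fixed $g \in S_{2n}(e)$ with reduced form $g^{-1} = s_1 \cdots s_{2n}$, the condition $g^{-1} \in H_\xi$ amounts to $\xi_i = s_i$ for $1 \le i \le n$ together with $\xi_{n+1} \ne s_{n+1}$. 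Combining the shadow formula $\nu(O_{s_1\cdots s_n}) = \tfrac{1}{2r(2r-1)^{n-1}}$ with the factor $\tfrac{2r-2}{2r-1}$ coming from the constraint on $\xi_{n+1}$ will give $\nu\{\xi : g^{-1} \in H_\xi\} = \tfrac{2r-2}{2r(2r-1)^n}$, and substituting this together with $|S_{2n}(e)| = 2r(2r-1)^{2n-1}$ collapses the three factors to $|S_{2n}(e)|^{-1}$, yielding the identity.

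To finish, the sphere $S_{2n}(e)$ is invariant under $g\mapsto g^{-1}$, so the sum $\sum_{g \in S_{2n}(e)} f(gx)$ equals $\sum_{g \in S_{2n}(e)} f(g^{-1}x)$, and Proposition \ref{prop:hat} then delivers pointwise a.e. convergence of this average to $\EE[f|\FF^2]$, in both the $L^p$ ($p>1$) and $L\log^+ L$ settings. The only step requiring any real care is the horospherical measure computation yielding $\nu\{\xi : g^{-1}\in H_\xi\}$; everything else is an application of tools already established.
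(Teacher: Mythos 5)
Your proposal is correct and takes essentially the same route as the paper, which derives the corollary by setting $\psi\equiv 1$ in Proposition \ref{prop:hat}. The explicit horospherical measure computation you carry out is really a re-derivation of the $\psi\equiv 1$ case of Lemma \ref{A_2n^psi}, which the paper has already established, so the paper can simply cite that identity rather than recompute it.
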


\begin{proof}
This follows from the previous proposition by setting $\psi \equiv 1$.
\end{proof}

\subsection{Comparing averages on the boundary}

We now turn to establish  that each operator $f\mapsto \bA_{2n}[\psi;f]$ has a form similar to that of the operator $\mu_{2n}$ from  
 Theorem \ref{thm:main}.  Namely, $f\mapsto \bA_{2n}[\psi;f]$ is given by averaging with respect to a probability measure on $\FF$. 
Recall that we have already associated with a probability density $\psi$ on the boundary a sequence of probability measures on the group, namely $\mu_{2n}^\psi$, from Theorem \ref{thm:sector}. The sequence of probability measures we define now is different and will be denoted $\eta_{2n}^\psi$. In order to define it we need some definitions.
\begin{defn}
Let $t_1\cdots t_{2n}=g$ be the reduced form of an element $g\in \FF^2$. Define
$$O'(g) =O(t_1\cdots t_n) - O(t_1 \cdots t_nt_{n+1}) \subset \partial \FF$$
where $O(\cdot)$ is as defined in the introduction. An elementary exercise reveals that $\xi \in O'(g)$ if and only if $h_\xi(g)=0$. 
 \end{defn}
Define $\eta^\psi_{2n} \in l^1(\F)$ by
$$\eta^\psi_{2n}(g) :=\frac{1}{(2r-2)(2r-1)^{n-1}} \int_{O'(g)} \psi~ d\nu$$
if $|g|=2n$ and $0$ otherwise.

\begin{lem}\label{A_2n^psi}
For any function $f \in L^p(X)$ ($p\ge 1$), any $n\ge 0$ and any $x\in X$,
$$\bA_{2n}[\psi;f](x)=\sum_{g\in \FF} f(g^{-1}x) \eta^\psi_{2n}(g).$$
\end{lem}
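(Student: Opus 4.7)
The plan is to unfold the definition of $\bA_{2n}[\psi;f]$ and apply Fubini, then use the characterization $\xi\in O'(g)\iff h_\xi(g)=0$ (recorded just before the lemma statement) to identify the relevant boundary sets. This is essentially a direct calculation.

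First, I will substitute into the defining integral to write
$$\bA_{2n}[\psi;f](x) = \int_{\partial\FF} \frac{\psi(\xi)}{|\tcS_{2n}(x,\xi)|} \sum_{(x',\xi')\in \tcS_{2n}(x,\xi)} f(x')\, d\nu(\xi).$$
The key preliminary observation is that $|\tcS_{2n}(x,\xi)|$ is the \emph{constant} $(2r-2)(2r-1)^{n-1}$ for $\lambda\times \nu$-a.e.\ $(x,\xi)$. This follows because the $\FF$-action on $(\partial\FF,\nu)$ is essentially free (so $g\mapsto (gx,g\xi)$ is injective for a.e.\ $\xi$), making the cardinality equal to $|\{g\in\FF : g^{-1}\in H_\xi,\ |g|=2n\}|$; a direct count using the parametrization $g^{-1}=\xi_1\cdots\xi_n t_1\cdots t_n$ with $t_1\notin\{\xi_n^{-1},\xi_{n+1}\}$ and $t_{i+1}\neq t_i^{-1}$ (from \S\ref{sec:horospheres}) then yields $(2r-2)(2r-1)^{n-1}$, independent of $(x,\xi)$.

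Next, since for each fixed $x$ the sum has only finitely many nonzero terms (namely those with $|g|=2n$), I can freely exchange sum and integral to obtain
$$\bA_{2n}[\psi;f](x) = \frac{1}{(2r-2)(2r-1)^{n-1}}\sum_{|g|=2n} f(gx)\int_{\{\xi\,:\,g^{-1}\in H_\xi\}}\psi\,d\nu.$$
The key identification — and essentially the only place requiring thought — is that $\{\xi : g^{-1}\in H_\xi\}=\{\xi : h_\xi(g^{-1})=0\}=O'(g^{-1})$, using $H_\xi=h_\xi^{-1}(0)$ and the elementary observation recorded just before the lemma statement (which in turn follows directly from the horosphere description in \S\ref{sec:horospheres}).

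Finally, changing variables $g\mapsto g^{-1}$ in the sum (which preserves $\FF^2\cap S_{2n}(e)$ and sends $O'(g^{-1})$ to $O'(g)$) converts the expression into $\sum_{|g|=2n} f(g^{-1}x)\eta^\psi_{2n}(g)$; extending the sum to all of $\FF$ via $\eta^\psi_{2n}(g)=0$ for $|g|\neq 2n$ gives the claimed formula. The main — indeed essentially only — substantive step is the shadow identification; the rest is bookkeeping with counting and Fubini.
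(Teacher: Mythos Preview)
Your proof is correct and follows essentially the same route as the paper's: unfold the definition, use the constant cardinality $|H_\xi\cap S_{2n}(e)|=(2r-2)(2r-1)^{n-1}$, swap sum and integral, and invoke the equivalence $h_\xi(g)=0\iff \xi\in O'(g)$. The only cosmetic difference is that the paper reparametrizes via $g\mapsto g^{-1}$ at the outset (writing $\tcS_{2n}(x,\xi)=\{(g^{-1}x,g^{-1}\xi):g\in H_\xi\cap S_{2n}(e)\}$), so it lands directly on $f(g^{-1}x)$ and $O'(g)$ without your final change of variables; and it cites Lemma~\ref{lem:basic} for the cardinality where you count directly (and are a bit more explicit about essential freeness).
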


\begin{proof}
Let $\xi \in \partial \FF$. Recall that for $x\in X$,
$$\tcS_{2n}(x,\xi)=\big\{ (g^{-1}x,g^{-1}\xi)~:~ g \in H_\xi \cap S_{2n}(e) \big\}.$$
Lemma \ref{lem:basic} implies $|H_\xi \cap S_{2n}(e)| = (2r-2)(2r-1)^{n-1} $. So,
\begin{eqnarray*}
\bA_{2n}[\tcS;i(f)](x,\xi) &=& \frac{1}{|\tcS_{2n}(x,\xi)|} \sum_{(x',\xi') \in \tcS_{2n}(x,\xi)} i(f)(x',\xi')\\
&=&\frac{1}{(2r-2)(2r-1)^{n-1}}  \sum_{g\in H_\xi \cap S_{2n}(e)}  f(g^{-1}x).
\end{eqnarray*}
Thus,
\begin{eqnarray*}
\bA_{2n}[\psi;f](x) &=& \frac{1}{(2r-2)(2r-1)^{n-1}}  \int_{\xi \in \partial \FF} \sum_{g\in H_\xi \cap S_{2n}(e)} f(g^{-1}x) \psi(\xi) ~d\nu(\xi).
\end{eqnarray*}
Since $g \in H_\xi \cap S_{2n}(e)$ if and only if $\xi \in O'(g)$, it follows by switching the order of the summation and integral above that
 \begin{eqnarray*}
\bA_{2n}[\psi;f](x) &=& \frac{1}{(2r-2)(2r-1)^{n-1}} \sum_{g\in S_{2n}(e)} \int_{\xi \in O'(g)} f(g^{-1}x) \psi(\xi) ~d\nu(\xi) \\ 
&=&  \sum_{g\in S_{2n}(e)}\eta_{2n}^\psi (g)f(g^{-1}x)=\eta_{2n}^\psi(f).
\end{eqnarray*}
\end{proof}

Recall that $\pi_\partial :\ell^1(\F) \to L^1(\partial \FF,\nu)$ is the linear map satisfying $\pi_\partial(\delta_g)= \nu(O_g)^{-1}\chi_{O_g}$ and if $\psi \in L^1(\partial \FF,\nu)$ then $\mu_n^\psi \in \ell^1(\FF)$ is the function
$$\mu_n^\psi(g)=\int_{O(g)} \psi(\xi)~d\nu(\xi)$$
if $g$ is in the sphere $S_n(e)$. Otherwise $\mu_n^\psi(g)=0$. The set $O(g)$ is the shadow of $g$ (with light-source at $e$) defined in \S \ref{sec:state}. 


\begin{lem}\label{lem:convergence}
 Let $\psi\in L^q(\partial \FF,\nu)$.  Then $\{\pi_\partial(\eta^\psi_{2n})\}_{n=1}^\infty$ converges to $\psi$ in $L^q$-norm when  $1 \le q < \infty$, and uniformly if $\psi$ is continuous. 
\end{lem}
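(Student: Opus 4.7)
The plan is first to recognize $\pi_\partial(\eta^\psi_{2n})$ as an averaging operator with shrinking cells. For each $\xi\in\partial\FF$, the unique element of $S_{2n}(e)$ whose shadow contains $\xi$ is $g_\xi := \xi_1\xi_2\cdots\xi_{2n}$. Unpacking the definitions and using
$\nu(O_{g_\xi}) = (2r)^{-1}(2r-1)^{1-2n}$ together with
$\nu(O'(g_\xi)) = (2r-2)(2r)^{-1}(2r-1)^{-n}$ yields
$$
\pi_\partial(\eta^\psi_{2n})(\xi) \;=\; \eta^\psi_{2n}(g_\xi)\,\nu(O_{g_\xi})^{-1} \;=\; \frac{1}{\nu(O'(g_\xi))}\int_{O'(g_\xi)} \psi\,d\nu.
$$
So $\pi_\partial(\eta^\psi_{2n})(\xi)$ is simply the $\nu$-average of $\psi$ over the set $O'(g_\xi)$, which depends only on $\xi_1,\ldots,\xi_{n+1}$.

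I would next handle the case of continuous $\psi$. Any $\eta\in O'(g_\xi)$ agrees with $\xi$ in the first $n$ coordinates, hence $d_\partial(\xi,\eta)\le 1/(n+1)$. Since $\partial\FF$ is compact, a continuous $\psi$ is uniformly continuous, so
$$
\sup_{\xi\in\partial\FF}\bigl|\pi_\partial(\eta^\psi_{2n})(\xi) - \psi(\xi)\bigr| \;\le\; \sup\bigl\{|\psi(\eta)-\psi(\xi)| : d_\partial(\xi,\eta)\le 1/(n+1)\bigr\} \;\to\; 0.
$$
This settles uniform convergence.

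For $1\le q<\infty$, the key ingredient is an $L^q$-contraction estimate $\|\pi_\partial(\eta^\psi_{2n})\|_q\le\|\psi\|_q$. Jensen's inequality applied pointwise gives
$$
\bigl|\pi_\partial(\eta^\psi_{2n})(\xi)\bigr|^q \;\le\; \frac{1}{\nu(O'(g_\xi))}\int_{O'(g_\xi)}|\psi|^q\,d\nu,
$$
and after integrating in $\xi$ and swapping orders by Fubini, everything reduces to verifying that
$$
I(\eta) := \int_{\partial\FF}\frac{\chi_{O'(g_\xi)}(\eta)}{\nu(O'(g_\xi))}\,d\nu(\xi) = 1 \qquad \forall\,\eta\in\partial\FF.
$$
The crucial observation is the symmetric characterization
$$
\eta\in O'(g_\xi) \;\iff\; \bigl(\xi_i=\eta_i\ \forall\, i\le n\ \text{and}\ \xi_{n+1}\ne\eta_{n+1}\bigr) \;\iff\; \xi\in O'(g_\eta),
$$
together with the fact that $\nu(O'(g_\zeta))=(2r-2)(2r)^{-1}(2r-1)^{-n}$ is independent of $\zeta$. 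Hence $I(\eta)=1$ for every $\eta$. A standard density argument then closes the proof: given $\psi\in L^q$ and $\varepsilon>0$, choose continuous $\psi'$ with $\|\psi-\psi'\|_q<\varepsilon$; the triangle inequality together with the contraction bound gives
$$
\|\pi_\partial(\eta^\psi_{2n}) - \psi\|_q \;\le\; 2\|\psi-\psi'\|_q + \|\pi_\partial(\eta^{\psi'}_{2n}) - \psi'\|_q,
$$
and the last term tends to zero by the uniform convergence already established for continuous data.

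The main obstacle I anticipate is the symmetric characterization in the contraction step. Once one recognizes that the condition ``$\eta\in O'(g_\xi)$'' is symmetric under swapping $\xi$ and $\eta$, the averaging operator $\psi\mapsto \pi_\partial(\eta^\psi_{2n})$ becomes essentially self-dual and therefore contractive on every $L^q(\partial\FF,\nu)$; after that observation, the remaining steps (rewriting $\pi_\partial(\eta^\psi_{2n})(\xi)$ as an average over $O'(g_\xi)$, invoking uniform continuity on the compact boundary, and carrying out the density argument) are routine.
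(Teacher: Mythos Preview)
Your proof is correct but takes a genuinely different route from the paper's. The paper observes the algebraic identity
\[
\pi_\partial(\eta^\psi_{2n}) \;=\; \frac{2r-1}{2r-2}\,\EE[\psi\mid\Sigma_n] \;-\; \frac{1}{2r-2}\,\EE[\psi\mid\Sigma_{n+1}],
\]
where $\Sigma_m$ is the $\sigma$-algebra generated by the shadows $\{O(g):g\in S_m(e)\}$, and then invokes the martingale convergence theorem (for $L^q$) and uniform continuity (for continuous $\psi$) applied to the conditional expectations $\EE[\psi\mid\Sigma_m]=\pi_\partial(\mu_m^\psi)$. Your approach instead identifies $\pi_\partial(\eta^\psi_{2n})(\xi)$ directly as the $\nu$-average of $\psi$ over $O'(g_\xi)$, proves uniform convergence from uniform continuity, and obtains the $L^q$ case by establishing that the averaging operator is an $L^q$-contraction (via Jensen and the nice symmetry $\eta\in O'(g_\xi)\iff \xi\in O'(g_\eta)$) and then approximating by continuous functions. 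The paper's argument is shorter and reveals the relationship $\pi_\partial(\eta^\psi_{2n})=\frac{2r-1}{2r-2}\pi_\partial(\mu_n^\psi)-\frac{1}{2r-2}\pi_\partial(\mu_{n+1}^\psi)$, which is independently useful; your argument is more self-contained (no appeal to martingale theory) and makes the self-duality of the averaging operator explicit.
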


\begin{proof}
For $n\ge 1$, let  $\EE[\psi|\Sigma_n]$ be the conditional expectation of $\psi$ on $\Sigma_n$, the $\sigma$-algebra generated by $\{O(g):~g\in S_n(e)\}$. Thus 
$$\EE[\psi|\Sigma_n](\xi) = \frac{1}{\nu(O_g)} \int_{O(g)} \psi(\xi')~d\nu(\xi')$$
if $\xi \in O_g$ with $g\in S_n(e)$. Note that $\nu(O_g)=|S_n(e)|^{-1}=\frac{1}{(2r)(2r-1)^{n-1}}$, and 
that  $\EE[\psi|\Sigma_{n}](\xi)=\pi_\partial (\mu_n^\psi)(\xi)$.

By the martingale convergence theorem, $\EE[\psi|\Sigma_n]$ converges to $\psi$ in $L^q$ as $n\to \infty$. Noting that 
\begin{eqnarray*}
\pi_\partial(\eta^\psi_{2n}) &=& \frac{|S_{2n}(e)|}{(2r-2)(2r-1)^{n-1}} \Big(|S_n(e)|^{-1}\EE[\psi|\Sigma_n]-|S_{n+1}(e)|^{-1}\EE(\psi|\Sigma_{n+1})\Big) \\
&=& 
\frac{2r-1}{2r-2}\pi_\partial(\mu_n^\psi)-\frac{1}{2r-2}\pi_\partial(\mu_{n+1}^\psi)\,\,,
\end{eqnarray*}
convergence of $\pi_\partial(\eta_{2n}^\psi)$ to $\psi$ in $L^q$ follows immediately. When $\psi$ is continuous on the boundary it is uniformly continuous and then clearly 
$\EE[\psi|\Sigma_{n}](\xi)=\pi_\partial (\mu_n^\psi)(\xi)$ converges uniformly to $\psi$. 

\end{proof}

This next result is not needed for the main theorem; it seems interesting for its own sake.

\begin{prop}
As above, let $1<p,q<\infty$ be such that $\frac{1}{p} + \frac{1}{q}=1$. Let $f \in L^{p'}(X)$ for some $p'$ with $p<p'$. For $x\in X$ and $n\ge 0$, define $f_{x,2n} \in l^p(\FF)$ by $f_{x,2n}(g)=f(g^{-1}x)$ if $g\in S_{2n}(e)$ and $f_{x,2n}(g)=0$ otherwise. Let $\pi'_\partial(f_{x,2n}):\partial \FF \to \RR$ be the function 
$$\pi'_\partial (f_{x,2n}):= \sum_{g\in S_{2n}(e)} f_{x,2n}(g) \chi_{O(g)}.$$
Then, for a.e. $x\in X$, $\{\pi'_\partial(f_{x,2n})\}_{n=1}^\infty$ converges to the constant function $\xi \mapsto \EE[f|\FF^2](x)$ in the weak topology on $L^p(\partial \FF,\nu)$.
\end{prop}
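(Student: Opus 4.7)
The strategy is to reduce weak convergence to pairing against a countable dense subset of $L^q(\partial\FF,\nu)$ (where $1/p+1/q=1$) combined with a uniform $L^p$-bound on the sequence $\{\pi'_\partial(f_{x,2n})\}_n$. For any $\psi\in L^q(\partial\FF,\nu)$, the essential disjointness of the shadows $\{O(g):g\in S_{2n}(e)\}$ yields, by a direct computation,
$$\int_{\partial\FF}\pi'_\partial(f_{x,2n})(\xi)\psi(\xi)\,d\nu(\xi)=\sum_{g\in S_{2n}(e)} f(g^{-1}x)\int_{O(g)}\psi\,d\nu=\mu_{2n}^\psi(f)(x).$$
Weak convergence to the constant $\EE[f|\FF^2](x)$ thus amounts to the scalar convergence $\mu_{2n}^\psi(f)(x)\to \EE[f|\FF^2](x)\int\psi\,d\nu$ for a.e.\ $x$, for $\psi$ ranging over a dense countable family.

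For each fixed $\psi\in L^q$, write $\psi=\psi^+-\psi^-$ and normalize each nonzero part to a probability density $\tilde\psi^\pm=\psi^\pm/\|\psi^\pm\|_1$. The associated probability measures $\mu_{2n}^{\tilde\psi^\pm}$ on $S_{2n}(e)$ satisfy $\pi_\partial(\mu_{2n}^{\tilde\psi^\pm})=\EE[\tilde\psi^\pm\,|\,\Sigma_{2n}]$, and the martingale convergence theorem gives convergence in $L^q$-norm to $\tilde\psi^\pm$ (for $q<\infty$). Because the hypothesis $f\in L^{p'}$ with $p'>p$ leaves the strict slack $1/p'+1/q<1$, Theorem \ref{thm:main} applies and delivers $\mu_{2n}^{\tilde\psi^\pm}(f)(x)\to\EE[f|\FF^2](x)$ pointwise a.e.; linearity in $\psi$ then yields the desired scalar convergence of $\mu_{2n}^\psi(f)(x)$ to $\EE[f|\FF^2](x)\int\psi\,d\nu$.

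To upgrade this from a countable dense subset $D\subset L^q$ (where a single full-measure set of $x$'s can be chosen to handle every $\psi\in D$ simultaneously) to weak convergence against every $\psi\in L^q$, the remaining ingredient is the uniform bound $\sup_n\|\pi'_\partial(f_{x,2n})\|_p<\infty$ for a.e.\ $x$. This is verified by
$$\|\pi'_\partial(f_{x,2n})\|_p^p=\sum_{g\in S_{2n}(e)}|f(g^{-1}x)|^p\nu(O(g))=\frac{1}{|S_{2n}(e)|}\sum_{g\in S_{2n}(e)}|f(g^{-1}x)|^p,$$
which is the uniform spherical average of $|f|^p$. Since $|f|^p\in L^{p'/p}(X)$ with $p'/p>1$, Corollary \ref{cor:thirdproof} applied to $|f|^p$ shows this sequence converges a.e.\ to $\EE[|f|^p\,|\,\FF^2](x)$, and in particular is bounded in $n$ for a.e.\ $x$. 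A standard $\epsilon/3$-density argument then completes the proof. The main technical point---and the chief obstacle---is this interplay between pointwise statements in $x$ and universal quantification over $\psi$: without the strict inequality $p'>p$ neither the hypotheses of Theorem \ref{thm:main} nor the a.e.\ finiteness of $\sup_n\|\pi'_\partial(f_{x,2n})\|_p$ would be accessible, so the gap between $p$ and $p'$ is essential throughout.
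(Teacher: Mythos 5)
Your proof is correct, but it takes a genuinely different route from the paper's. You identify $\langle \pi'_\partial(f_{x,2n}),\psi\rangle = \mu_{2n}^\psi(f)(x)$ directly, split $\psi$ into positive and negative parts, normalize to probability densities, and feed these into Theorem \ref{thm:main} (with the observation that $p'>p$ gives $1/p'+1/q<1$, which is exactly the slack that theorem requires, and that $\pi_\partial(\mu_{2n}^{\tilde\psi^\pm})=\EE[\tilde\psi^\pm|\Sigma_{2n}]$ converges in $L^q$ by martingale convergence). The paper instead avoids invoking Theorem \ref{thm:main} (which is proven only in the subsequent subsection): it decomposes $\langle\pi'_\partial(f_{x,2n}),\psi\rangle=\langle\pi'_\partial(f_{x,2n}),\pi_\partial(\eta^\psi_{2n})\rangle+\langle\pi'_\partial(f_{x,2n}),\psi-\pi_\partial(\eta^\psi_{2n})\rangle$, recognizes the first term as $\bA_{2n}[\psi;f](x)$ and handles it with Proposition \ref{prop:hat}, and kills the second term with H\"older plus Lemma \ref{lem:convergence}. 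Both routes hinge on the same quantitative ingredient, namely the a.e.\ boundedness of $\|\pi'_\partial(f_{x,2n})\|_p$ via Corollary \ref{cor:thirdproof} applied to $|f|^p\in L^{p'/p}$, and both need $p'>p$ for exactly this reason. What your version buys is that it is conceptually shorter (a single appeal to Theorem \ref{thm:main} instead of re-running the $\eta^\psi_{2n}$ correction), and you make explicit the upgrade from ``for each $\psi$, a.e.\ $x$'' to ``a.e.\ $x$, for all $\psi$'' via a countable dense set in $L^q$ and the uniform $L^p$-bound---a quantifier step the paper passes over silently. The trade-off is a forward reference: Theorem \ref{thm:main} is proven after this proposition in the paper, so while there is no circularity (the paper's proof of Theorem \ref{thm:main} does not use this proposition), your proof would need to be placed after Section 6.4, or one would need to note that the invocation of Theorem \ref{thm:main} is legitimate because its proof is independent.
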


\begin{proof}


For $\rho \in L^p(\partial \FF,\nu)$ and $\psi \in L^q(\partial \FF,\nu)$, let $\langle \rho, \psi \rangle :=  \int \rho \psi~d\nu$. It suffices to show that for any $\psi \in L^q(\partial \FF,\nu)$ and a.e. $x\in X$, $\langle \pi'_\partial (f_{x,2n}), \psi\rangle$ converges to $\EE[f|\FF^2](x) \int \psi ~d\nu$. By linearity, we may assume that $\psi \ge 0$ and $\int \psi ~d\nu =1$. Observe that
\begin{eqnarray}\label{eqn:thing3}
\langle \pi'_\partial (f_{x,2n}), \psi\rangle = \langle \pi'_\partial (f_{x,2n}), \pi_\partial (\eta^\psi_{2n} )\rangle + \langle \pi'_\partial (f_{x,2n}), \psi - \pi_\partial (\eta^\psi_{2n}) \rangle.
\end{eqnarray}
It follows from Proposition \ref{prop:hat}, that for a.e. $x\in X$, 
$$\langle \pi'_\partial (f_{x,2n}), \pi_\partial (\eta^\psi_{2n}) \rangle = \bA_{2n}[\psi;f]$$
converges to $\EE[f|\FF^2](x)$. It follows from the previous lemma that $\psi - \pi_\partial ( \eta^\psi_{2n})$ converges to zero in norm. Since $ \norm{\pi'_\partial(f_{x,2n})}_p$ involves the {\it uniform  spherical} average of $\abs{f}^p$, it follows from Corollary \ref{cor:thirdproof} that $\|\pi'_\partial(f_{x,2n})\|_p$ converges to $\EE\big[|f|^p|\FF^2\big](x)^{1/p}$ for a.e. $x\in X$, where we also use $p<p'$ to conclude that $|f|^p \in L^{p'/p}(X)$ with $p'/p >1$.

By H\"older's inequality,
$$\left|\left\langle \pi'_\partial (f_{x,2n}), \psi - \pi_\partial (\eta^\psi_{2n}) \right\rangle\right| \le \|\pi'_\partial (f_{x,2n})\|_p \|\psi - \pi_\partial (\eta^\psi_{2n})\|_q$$
tends to zero as $n\to\infty$. Thus equation (\ref{eqn:thing3}) implies the proposition.
\end{proof}

\begin{remark}
Typically, $\pi'_\partial(f_{x,2n})$ does not converge to $\EE[f|\FF^2](x)$ in norm. To see this, observe that $\|\pi'_\partial(f_{x,2n})\|_p$ converges to $\EE_2[|f|^p](x)^{1/p}$ (for a.e. $x\in X$). The norm of the constant function $\xi \mapsto \EE[f|\FF^2](x)$ is $|\EE[f|\FF^2](x)|$. Unless $f$ is constant on the ergodic component containing $x$, Jensen's inequality implies $\EE[|f|^p|\FF^2](x)^{1/p} \ne |\EE[f|\FF^2](x)|$. This uses $p>1$.
\end{remark}

\subsection{Proof of the main theorem}

We now turn to the proof of Theorem \ref{thm:main}, whose formulation we recall for the reader's convenience. 

{\bf Theorem 1.2.}{\it  \,\,\,Let $\{\mu_{2n}\}_{n=1}^\infty$ be a sequence of probability measures in $\ell^1(\FF)$ such that $\mu_{2n}$ is supported on $S_{2n}(e)$.
 Let $1< q < \infty$,  and suppose that $\{\pi_\partial(\mu_{2n})\}_{n=1}^\infty$ converges in $L^q(\partial \FF,\nu)$. Let $(X,\lambda)$ be a probability space on which $\FF$ acts by measure-preserving transformations.
If $f\in L^p(X)$, $1<p <\infty$ and $\frac{1}{p} + \frac{1}{q} < 1$, then the averages 
$$\mu_{2n}(f)(x) := \sum_{g \in \FF} f(g^{-1}x)\mu_{2n}(g)$$
 converge pointwise almost surely and in $L^p$-norm to $\EE[f|\FF^2]$.  Furthermore, if $q=\infty$ and  $\pi_\partial(\mu_{2n})$ converge uniformly, then pointwise convergence to the same limit holds for any $f$ in the Orlicz space $(L\log L)(X,\lambda)$. }

\begin{proof}[Proof of Theorem \ref{thm:main}]
To begin, we assume $1<q<\infty$. Let ${p'}>1$ be such that $\frac{1}{{p'}} + \frac{1}{q}=1$. Since $\frac{1}{p}+\frac{1}{q}<1$, it follows that ${p'}<p$. Let $f\in L^p(X)$. Choose a measurable version $\EE[f|\FF^2]$ of the conditional expectation. Let $\psi$ be the limit of  $\{\pi_\partial(\mu_{2n})\}_{n=1}^\infty$. Let $X' \subset X$ be the set of all $x\in X$ such that
\begin{eqnarray*}
\EE[f|\FF^2](x) &=& \lim_{n\to\infty} \frac{1}{|S_{2n}(e)|} \sum_{g\in S_{2n}(e)} f(g^{-1}x)\,\,\\
 &=& \lim_{n\to\infty} \bA_{2n}[\psi;f](x)\,\, , \\
\left(\EE([|f|^{p'}|\FF^2](x)\right)^{1/{p'}}&=&\lim_{n\to \infty}\left(\frac{1}{|S_{2n}(e)|} \sum_{g\in S_{2n}(e)}\abs{f(g^{-1}x)}^{p'}\right)^{1/p'}\,\,.
\end{eqnarray*}
 
By Proposition \ref{prop:hat} and Corollary \ref{cor:thirdproof}, $\lambda(X')=1$. For $x\in X'$ and $n> 0$, let $f_{x,2n} \in l^{p'}(\FF)$ be the function $f_{x,2n}(g) := f(g^{-1}x)$ if $g\in S_{2n}(e)$ and $f_{x,2n}(g):=0$ otherwise. By Lemma \ref{A_2n^psi} and H\"older's inequality for functions on $\FF$,
\begin{eqnarray*}
\big|\mu_{2n}(f)(x)-\bA_{2n}[\psi;f](x)\big| &=&\big| \sum_{g\in S_{2n}(e)} f(g^{-1}x) \big(\mu_{2n}(g)-\eta^\psi_{2n}(g)\big)\big|\\
&\le& \|f_{x,2n}\|_{\ell^{p'}(\FF)} \|\mu_{2n}-\eta^\psi_{2n}\|_{\ell^q(\FF)}.
\end{eqnarray*}
Recall that $\pi_\partial:l^1(\FF)\to L^1(\partial \FF,\nu)$ is defined by $\pi_\partial(\delta_g)=\nu(O_g)^{-1}\chi_{O(g)} = |S_{2n}(e)|\chi_{O(g)}$ if $\abs{g}=2n$. It now follows that:
\begin{eqnarray*}
\|\mu_{2n}-\eta^\psi_{2n}\|_{\ell^q(\FF)} &=&  \Big(\sum_{g\in S_{2n}(e)} |\mu_{2n}(g) - \eta^\psi_{2n}(g)|^q \Big)^{1/q}\\
&=&  \left(\sum_{g\in S_{2n}(e)} \abs{\frac{1}{\nu(O_g)}\int_{O_g}\left( \mu_{2n}(g)-\eta_{2n}^\psi(g) \right) d\nu}^q\right)^{1/q} \\
&\le&
 \Big(\sum_{g\in S_{2n}(e)} \nu(O_g)^{q-1} \int_{O_g} |\pi_\partial(\mu_{2n})(\xi) - \pi_\partial(\eta^\psi_{2n})(\xi)|^q  ~d\nu(\xi) \Big)^{1/q}\\
&=& |S_{2n}(e)|^{-1/p'} \|\pi_\partial(\mu_{2n})-\pi_\partial(\eta^\psi_{2n})\|_{L^q(\partial \FF,\nu)}.
\end{eqnarray*}
Combining this with the previous inequality, we have
\begin{eqnarray*}\label{eqn:finis}
\big|\mu_{2n}(f)(x)-\bA_{2n}[\psi;f](x)\big| &\le&  |S_{2n}(e)|^{-1/p'} \|f_{x,2n}\|_{l^{p'}(\FF)} \|\pi_\partial(\mu_{2n})-\pi_\partial(\eta^\psi_{2n})\|_{L^q(\partial \FF,\nu)}.
\end{eqnarray*}

The definition of $X'$ implies 
$|S_{2n}(e)|^{-1/p'}\|f_{x,2n}\|_{\ell^{p'}(\FF)}$ tends to $\EE[|f|^{p'}|\FF^2](x)^{1/{p'}}$ as $n\to\infty$. Lemma \ref{lem:convergence} implies that $\|\pi_\partial(\mu_{2n})-\pi_\partial(\eta^\psi_{2n})\|_{L^q(\partial\FF,\nu)}$ tends to zero as $n\to\infty$. So 
$$\lim_{n\to\infty} \big|\mu_{2n}(f)(x)-\bA_{2n}[\psi;f](x)\big| =0.$$
The definition of $X'$ now implies
$$\lim_{n\to\infty} \mu_{2n}(f)(x) = \EE[f|\FF^2](x).$$
 This proves the pointwise result if $1<q<\infty$. 
 
 As to the case $q=\infty$, uniform convergence of $\pi_\partial(\mu_{2n})$ implies that the limit function $\psi$ is continuous on the boundary. Therefore the second part of Lemma \ref{lem:convergence} gives the uniform convergence of $\pi_\partial(\eta_{2n}^\psi)$ to $\psi$, and thus also the convergence of $\|\pi_\partial(\mu_{2n})-\pi_\partial(\eta^\psi_{2n})\|_{L^\infty (\partial\FF,\nu)}$ to zero.  Corollary 
 \ref{cor:thirdproof} gives the convergence  of $|S_{2n}(e)|^{-1}\|f_{x,2n}\|_{\ell^{1}(\FF)}$  to 
 $\EE[|f| |\FF^2](x)$ if $f\in L \log L(X,\lambda)$.  Using these  two facts the same arguments used above establish the desired result also in the case when $p=p'=1$ and $q=\infty$ provided $f \in L\log L(X,\lambda)$.  

Finally, we note that the fact that $\mu_{2n}(f)$ converges to $\EE[f|\FF^2]$ in $L^p$-norm (if $p>1$) follows from the pointwise result by  a standard argument (e.g., see the end of the proof of Theorem \ref{thm:hs}).

\end{proof}

\end{document}